\documentclass{article}
\usepackage{amssymb, amsmath, amscd, latexsym, enumerate}
\usepackage{eepic}
\usepackage[dvipdfmx]{hyperref}
\usepackage[dvipdfmx]{graphicx}
\usepackage[usenames]{color}
%
\title{Minimal surfaces with two ends \\ 
       which have the least total absolute curvature}
\author
 {Shoichi Fujimori 
    \thanks{Partially supported by JSPS Grant-in-Aid for 
        Young Scientists (B) 25800047.}
 \and 
  Toshihiro Shoda 
    \thanks{Partially supported by JSPS Grant-in-Aid for 
        Young Scientists (B) 24740047.
  \newline \indent 2000 {\em Mathematics Subject Classification.} 
                   Primary 53A10; Secondary 49Q05, 53C42.
  \newline \indent {\em Key words and phrases.} 
                   minimal surface, finite total curvature, two ends.}
 }
\date{September 21, 2015}
%
\usepackage{amsthm}
\theoremstyle{plain}
 \newtheorem{theorem}{Theorem}[section]
 \newtheorem{main-theorem}{Main Theorem}

 \newtheorem{lemma}[theorem]{Lemma}
 \newtheorem{corollary}[theorem]{Corollary}
\theoremstyle{definition}

 \newtheorem{remark}[theorem]{Remark}
 \newtheorem*{acknowledgements}{Acknowledgements}
 \newtheorem{example}[theorem]{Example}
 \newtheorem{problem}[theorem]{Problem}

\renewcommand{\Re}{\operatorname{Re}}

\numberwithin{equation}{section}
\numberwithin{figure}{section}
\numberwithin{table}{section}
\begin{document}
\maketitle

\begin{abstract}
In this paper, we consider complete non-catenoidal minimal surfaces 
of finite total curvature with two ends.  
A family of such minimal surfaces with 
least total absolute curvature is given.  
Moreover, we obtain a uniqueness theorem for this family from its symmetries. 
\end{abstract}

\section{Introduction} 
\label{sec:intro} 

For a complete minimal surface in Euclidean space, 
an inequality stronger than the classical inequality of Cohn-Vossen holds, 
giving a lower bound for the total absolute curvature.  
It is then natural to ask whether there is a minimal surface 
which attains this minimum value for the total absolute curvature. 
We consider this problem and contribute to the 
theory of existence of minimal surfaces in Euclidean space. 
Our work connects with the Bj\"orling problem for minimal surfaces 
in Euclidean space. 

Let $f:M\to\mathbb{R}^3$ be a minimal immersion of a $2$-manifold $M$ 
into Euclidean 3-space $\mathbb{R}^3$, and we usually call 
$f$ a \textit{minimal surface} in $\mathbb{R}^3$. 
Choosing isothermal coordinates makes $M$ a Riemann surface, 
and then $f$ is called a \textit{conformal minimal immersion}. 
The following representation formula is one of the basic tools 
in the theory of minimal surfaces: 

\begin{theorem}[Weierstrass representation \cite{O}]\label{th:w-rep}
Let $(g,\,\eta )$ be a pair of a meromorphic function $g$ and a 
holomorphic differential $\eta$ on a Riemann surface $M$ so that 
\begin{equation}\label{eq:1stff}
(1+|g|^2)^2\eta \bar{\eta}
\end{equation}
gives a Riemannian metric on $M$. 
We set 
\begin{equation}\label{eq:Phi}
\Phi := \begin{pmatrix}
          (1-g^2)\eta \\ i(1+g^2)\eta\\ 2g\eta\
        \end{pmatrix},
\end{equation}
where $i=\sqrt{-1}$.  
Assume that 
\begin{equation}\label{eq:period}\tag{P}
\Re\oint_\ell \Phi = {\bf 0} \quad 
\text{holds for any}\;\; \ell\in\pi_1(M). 
\end{equation}
Then
\begin{equation}\label{eq:surf}
f=\Re\int_{z_0}^z \Phi:M\to\mathbb{R}^3\qquad (z_0\in M)
\end{equation}
defines a conformal minimal immersion. 
\end{theorem}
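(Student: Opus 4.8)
The plan is to verify, in turn, that the map $f$ of \eqref{eq:surf} is (i) well defined on $M$, (ii) a conformal immersion inducing the metric \eqref{eq:1stff}, and (iii) harmonic, hence minimal. Write $\Phi={}^t(\phi_1,\phi_2,\phi_3)$ for the three one-forms in \eqref{eq:Phi}. First I would record a preliminary point that the hypothesis on \eqref{eq:1stff} handles automatically: although $g$ is only meromorphic, $\Phi$ is a genuinely \emph{holomorphic} $\mathbb{C}^3$-valued one-form, because at a pole of $g$ of order $k$ the expression $(1+|g|^2)^2\eta\bar\eta\sim|g|^4\eta\bar\eta$ can remain finite and positive only if $\eta$ vanishes to order $2k$ there, which is exactly what makes $g\eta$ and $g^2\eta$ holomorphic. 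Granting this, path-independence follows from the period condition: two paths from $z_0$ to $z$ differ by a loop $\ell\in\pi_1(M)$, the two integrals differ by $\oint_\ell\Phi$, and taking real parts and invoking \eqref{eq:period} shows $\Re\int_{z_0}^z\Phi$ is path-independent, so $f$ descends to a well-defined real-analytic map on $M$.

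The heart of the argument is two algebraic identities. A direct expansion of the components gives
\begin{equation*}
\phi_1^2+\phi_2^2+\phi_3^2=\bigl[(1-g^2)^2-(1+g^2)^2+4g^2\bigr]\eta^2=0,
\end{equation*}
and
\begin{equation*}
|\phi_1|^2+|\phi_2|^2+|\phi_3|^2=\bigl[|1-g^2|^2+|1+g^2|^2+4|g|^2\bigr]\eta\bar{\eta}=2(1+|g|^2)^2\eta\bar{\eta}.
\end{equation*}
Working in a local coordinate $z=u+iv$ and writing $\phi_j=\psi_j\,dz$, the relation $f=\Re\int\Phi$ yields $\partial f_j/\partial z=\tfrac12\psi_j$, hence $\psi_j=(f_j)_u-i(f_j)_v$. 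Summing the squares, the first identity reads $|f_u|^2-|f_v|^2-2i\inner{f_u}{f_v}=0$, which is precisely conformality: $|f_u|^2=|f_v|^2$ and $\inner{f_u}{f_v}=0$. The second identity gives $|f_u|^2=\tfrac12\sum_j|\psi_j|^2=(1+|g|^2)^2|\eta/dz|^2$, so the first fundamental form of $f$ is $|f_u|^2(du^2+dv^2)=(1+|g|^2)^2\eta\bar{\eta}$, namely \eqref{eq:1stff}. Since \eqref{eq:1stff} is assumed to be a Riemannian metric it is positive, whence $|f_u|^2>0$ and $f$ is an immersion with induced metric exactly \eqref{eq:1stff}.

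Finally, since each $\psi_j$ is holomorphic, each coordinate $f_j=\Re\int\psi_j\,dz$ is the real part of a locally defined holomorphic function and is therefore harmonic; equivalently $\Delta f_j=4\,\partial_{\bar z}\partial_z f_j=2\,\partial_{\bar z}\psi_j=0$. A conformal immersion whose coordinate functions are harmonic has vanishing mean curvature, so $f$ is a conformal minimal immersion, as claimed.

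I do not expect a serious obstacle, as this is a classical computation; the two identities above are its core and are purely algebraic. The only points genuinely demanding care are the preliminary verification that the metric hypothesis promotes $\Phi$ to a \emph{globally} holomorphic one-form across the poles of $g$, and the careful bookkeeping distinguishing the real period condition \eqref{eq:period} from the full complex periods in the path-independence step; everything else is routine.
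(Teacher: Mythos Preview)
Your argument is correct and is precisely the classical computation: holomorphicity of $\Phi$ across the poles of $g$ via the metric hypothesis, the two algebraic identities $\sum\phi_j^2=0$ and $\sum|\phi_j|^2=2(1+|g|^2)^2\eta\bar\eta$ giving conformality and the induced metric, and harmonicity of the coordinate functions giving minimality. The paper itself does not supply a proof of this theorem but simply cites Osserman~\cite{O}; your write-up is essentially the standard proof one finds there, so there is nothing to contrast.
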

The pair $(g,\,\eta)$ in Theorem~\ref{th:w-rep} is called the {\it Weierstrass data} of $f$. 
\begin{remark}
Condition \eqref{eq:period} is called the \textit{period condition} 
of the minimal surface. 
\eqref{eq:period} is equivalent to 
\begin{equation}\label{eq:period1}
\oint_\ell \eta = \overline{\oint_\ell g^2\eta}
\end{equation}
and
\begin{equation}\label{eq:period2}
\Re\oint_\ell g\eta = 0
\end{equation}
for any $\ell\in\pi_1(M)$. 
\end{remark}

\begin{remark}
The first fundamental form $ds^2$ and the second fundamental form 
${\rm I}\!{\rm I}$ of the surface \eqref{eq:surf} are given by
\[
ds^2=\left( 1+|g|^2\right)^2\eta\bar\eta , \qquad
{\rm I}\!{\rm I}=-\eta dg-\overline{\eta dg}.
\]
Moreover, $g:M\to\mathbb{C}\cup\{\infty\}$ coincides with the composition of 
the Gauss map $G:M\to S^2$ of the minimal surface and 
stereographic projection 
$\sigma:S^2\to\mathbb{C}\cup\{\infty\}$, that is, 
$g=\sigma\circ G$.
So we call $g$ the Gauss map of the minimal surface. 
\end{remark}

Next, we assume that a minimal surface is complete and 
of finite total curvature. 
These two conditions give rise to restrictions on the topological and 
conformal types of minimal surfaces.  

\begin{theorem}[\cite{H, O}]\label{th:huber-osserman}
Let $f:M\to\mathbb{R}^3$ be a conformal minimal immersion.  
Suppose that $f$ is complete and of finite total curvature. 
Then, the following hold{\rm :}
\begin{enumerate}
\item $M$ is conformally equivalent to a compact Riemann surface 
      $\overline{M}_{\gamma}$ of genus $\gamma$ punctured at 
      a finite number of points $p_1,\dots ,p_n$. 
\item The Gauss map $g$ extends to a holomorphic mapping 
      $\hat{g}:\overline{M}_{\gamma}\to\mathbb{C}\cup\{\infty\}$. 
\end{enumerate}
Removed points $p_1,\dots ,p_n$ correspond to ends of the minimal surface. 
\end{theorem}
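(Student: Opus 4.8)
The plan is to treat the two assertions separately, since they rest on different circles of ideas. For part (1), I would appeal to Huber's theorem on complete surfaces of finite total curvature. The only point requiring care is matching the hypotheses: Huber's result requires merely that the negative part of the Gaussian curvature be integrable, whereas here we are given that the total absolute curvature $\int_M |K|\,dA$ is finite. For a minimal surface these two conditions coincide, because the vanishing of the mean curvature forces the principal curvatures to satisfy $\kappa_1=-\kappa_2$, so that $K=\kappa_1\kappa_2=-\kappa_1^2\le 0$ everywhere and hence $\int_M K^-\,dA=\int_M |K|\,dA<\infty$. Huber's theorem then yields that $M$ is finitely connected and conformally equivalent to a compact Riemann surface $\overline{M}_\gamma$ with finitely many punctures $p_1,\dots,p_n$, each end being conformally a punctured disk.

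For part (2), the Gauss map $g$ is meromorphic on $M=\overline{M}_\gamma\setminus\{p_1,\dots,p_n\}$, and it remains to show that $g$ extends meromorphically across each puncture. The key is the identity relating the total curvature to the spherical area swept out by the Gauss map: since the Gauss map of a minimal surface is (anti)conformal, its Jacobian equals $-K=|K|$, so that $|K|\,dA=G^*\omega_{S^2}$, where $\omega_{S^2}$ is the area form of the unit sphere. Writing $\eta=\phi\,dz$ in a local coordinate and using the curvature formula for the Weierstrass data, finiteness of the total absolute curvature becomes equivalent to
\[
\int_M \frac{|g'|^2}{(1+|g|^2)^2}\,dx\,dy < \infty,
\]
i.e. the Gauss image has finite area counted with multiplicity. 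In particular the restriction of this integral to a punctured coordinate disk about each $p_j$ is finite.

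The heart of the matter is therefore a local removable-singularity statement: a meromorphic function $g$ on a punctured disk $0<|z|<\varepsilon$ whose spherical area is finite extends meromorphically to $z=0$. I would prove this by ruling out an essential singularity. If $g$ had an essential singularity at $z=0$, then by the Great Picard theorem $g$ would assume every value of $\mathbb{C}\cup\{\infty\}$, with at most two exceptions, infinitely often in each neighborhood of the puncture; each such complete covering of $S^2$ contributes a fixed positive amount to the spherical area, forcing that area to be infinite, a contradiction. Thus $g$ has at most a pole at $z=0$, i.e. extends to a holomorphic map $\hat{g}:\overline{M}_\gamma\to\mathbb{C}\cup\{\infty\}$.

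The main obstacle I anticipate is part (1): Huber's theorem is genuinely deep, relying on potential-theoretic estimates for subharmonic functions and isoperimetric-type inequalities to pin down the conformal type of the ends, so I would invoke it rather than attempt to reprove it. By contrast, once the curvature–area identity is in hand, part (2) is an elementary consequence of the classical value-distribution theory of meromorphic functions, and the Picard argument above is the only substantive step.
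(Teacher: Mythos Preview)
Your outline is correct and is, in essentials, the classical argument due to Huber and Osserman. Note, however, that the paper itself does not give a proof of this statement: Theorem~\ref{th:huber-osserman} is quoted as a background result with the citation \cite{H, O}, so there is no ``paper's proof'' to compare against. What you have sketched is precisely the content of those references.

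Two minor remarks on the write-up. First, in your Picard step the phrase ``each such complete covering of $S^2$'' is a little loose: Great Picard gives infinitely many preimages of each non-exceptional value, not a sequence of full coverings. The clean way to finish is to pick a small spherical disk $D\subset S^2$ avoiding the (at most two) Picard exceptional values and the countable set of critical values of $g$; then $g^{-1}(D)$ has infinitely many components, each mapped conformally onto $D$, so the spherical area is at least $\infty\cdot\operatorname{Area}(D)$. Alternatively, one can observe that finite spherical area is exactly finiteness of the Ahlfors--Shimizu characteristic, whence $g$ has bounded Nevanlinna characteristic near the puncture and therefore extends meromorphically. Second, your identification $|K|\,dA=G^*\omega_{S^2}$ already encodes the computation, so the intermediate formula with $\phi$ is not really needed. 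None of this affects the validity of your plan.
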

The asymptotic behavior around each end $p_i$ can be described by the order of the poles of 
$\Phi=(\Phi_1,\,\Phi_2,\,\Phi_3)$ in Theorem~\ref{th:w-rep} at $p_i$. 
Let 
\begin{equation}\label{eq:di}
d_i=\max_{1\leq j\leq 3} \{ {\rm ord} (\Phi_j,\,p_i) \}-1,
\end{equation}
where ${\rm ord} (\Phi_j,\,p_i)$ is the order of the pole of $\Phi_j$ at $p_i$ ($1\leq i\leq n$, $1\leq j\leq 3$). 
Condition \eqref{eq:period} yields ${\rm residue} (\Phi,\,p_i)\in \mathbb{R}^3$, and thus $d_i\geq 1$. 
The following theorem shows the geometric properties of $d_i$, 
which includes a stronger inequality than the Cohn-Vossen inequality: 
\begin{theorem}[\cite{O, JM, Sc}]\label{th:oss-ineq}
Let $f:M\to\mathbb{R}^3$ be a minimal surface as in Theorem~{\rm \ref{th:huber-osserman}}. \\
{\rm (a)} The immersion $f$ is proper. \\
{\rm (b)} If $S^2 (r)$ is the sphere of radius $r$, then 
$\frac{1}{r} (f(M) \cap S^2 (r))$ consists of $n$ closed curves $\Gamma_1,\,\cdots,\,\Gamma_n$ in  $S^2 (1)$ which 
converge $C^1$ to closed geodesics $\gamma_1,\,\cdots,\,\gamma_n$ of $S^2 (1)$, with multiplicities 
$d_1,\,\cdots, \,d_n$, as $r\to\infty$. Moreover, 
\begin{equation}\label{eq:oss-ineq1}
\frac{1}{2\pi}\int_{M}KdA = \chi (\overline{M}_{\gamma})-\sum_{i=1}^n (d_i+1)
\le \chi (M)-n
=   \chi (\overline{M}_\gamma)-2n
=   2 (1 - \gamma - n), 
\end{equation}
and equality holds if and only if each end is embedded. 
\end{theorem}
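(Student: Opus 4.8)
The plan is to treat the two assertions separately, localizing all analysis near each puncture $p_i$. Throughout I work on the compact model $\overline M_\gamma$ furnished by Theorem~\ref{th:huber-osserman}, choose a local holomorphic coordinate $z$ centered at $p_i$ (so the end is $z\to 0$ on a punctured disk), and write $\Phi=\Phi(z)\,dz$ with each component meromorphic. By the definition of $d_i$ the leading singular behavior is $\Phi(z)=z^{-(d_i+1)}(\mathbf a_i+o(1))$ for some nonzero $\mathbf a_i\in\mathbb C^3$, and the conformality relation $\Phi_1^2+\Phi_2^2+\Phi_3^2=0$ (equivalently $\eta=\tfrac12(\Phi_1-i\Phi_2)$, $g=\Phi_3/(2\eta)$) forces the top-order coefficient to satisfy $\mathbf a_i\cdot\mathbf a_i=0$. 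Writing $\mathbf a_i=\mathbf u_i+i\mathbf v_i$ with $\mathbf u_i,\mathbf v_i\in\mathbb R^3$, this means $|\mathbf u_i|=|\mathbf v_i|$ and $\mathbf u_i\perp\mathbf v_i$; this orthogonal-frame structure is precisely what produces a great circle in part (b), and in particular $\mathbf u_i\ne 0$ since $\mathbf a_i\ne 0$.

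For (a), since $d_i\ge 1$ the leading term of $f=\Re\int\Phi$ near $p_i$ is $\Re\big[\tfrac{-1}{d_i}\,\mathbf a_i z^{-d_i}\big]$, of magnitude $\tfrac{1}{d_i}|\mathbf u_i|\,|z|^{-d_i}$, which dominates both the possible $\log|z|$ residue term and any lower-order poles; hence $|f(z)|\to\infty$ as $z\to p_i$. Consequently the preimage under $f$ of any ball in $\mathbb R^3$ is a closed subset of $M$ staying away from all punctures, hence a compact subset of $M$, which gives properness.

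For (b), the heart is the asymptotic analysis of a single end. With $z=\rho e^{i\theta}$ and integrating the leading term, $f(z)=\Re\!\big[\tfrac{-1}{d_i}\,\mathbf a_i\,z^{-d_i}\big]+(\text{lower order})$, so after rescaling by $1/r$ with $r=|f|\sim C\rho^{-d_i}$ the dominant part is $\mathbf u_i\cos(d_i\theta)+\mathbf v_i\sin(d_i\theta)$ up to normalization. By the frame relation above this parametrizes a circle of constant radius $|\mathbf u_i|$ in $\mathrm{span}(\mathbf u_i,\mathbf v_i)$, and as $\theta$ runs over $[0,2\pi]$ the angle $d_i\theta$ runs over $[0,2\pi d_i]$, so the normalized curve $\Gamma_i=\tfrac1r(f(M)\cap S^2(r))$ converges to the great circle (closed geodesic) $\gamma_i$ of $S^2(1)$ traversed $d_i$ times. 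Controlling the error terms uniformly in $\theta$ to upgrade pointwise convergence to $C^1$ convergence is the main technical obstacle, and is where I would spend the most care.

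Finally, to obtain the curvature identity I would apply Gauss--Bonnet to the compact surface with boundary $M_\epsilon=\overline M_\gamma\setminus\bigcup_i\{|z|<\epsilon\}$, giving $\int_{M_\epsilon}K\,dA+\sum_i\oint_{\partial}\kappa_g\,ds=2\pi\chi(M_\epsilon)=2\pi(\chi(\overline M_\gamma)-n)$. Letting $\epsilon\to 0$, the first integral tends to $\int_M K\,dA$ (finite by hypothesis), and the asymptotic analysis shows each boundary circle contributes $\oint_{\partial}\kappa_g\,ds\to 2\pi d_i$, reflecting the $d_i$-fold wrapping; rearranging yields $\frac1{2\pi}\int_M K\,dA=\chi(\overline M_\gamma)-\sum_i(d_i+1)$. (Equivalently one can identify $\int_M K\,dA=-4\pi\deg\hat g$ and apply Riemann--Hurwitz to $\hat g$.) The inequality is then immediate from $d_i\ge 1$, already noted to follow from the reality of $\mathrm{residue}(\Phi,p_i)$, which gives $\sum(d_i+1)\ge 2n$ and hence the stated chain, with equality exactly when every $d_i=1$. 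It remains to match ``$d_i=1$'' with embeddedness: $d_i=1$ means $\Gamma_i$ covers $\gamma_i$ once, so for small $\epsilon$ the end is a graph over an annulus near $\gamma_i$ and is embedded, whereas $d_i\ge 2$ forces $d_i$-fold wrapping and self-intersection. This equivalence, together with the uniform error control behind the $C^1$ statement, is what I expect to require the most work.
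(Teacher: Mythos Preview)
The paper does not prove this theorem; it is quoted as a background result with references to Osserman, Jorge--Meeks, and Schoen, and no argument is supplied in the paper itself. There is therefore no ``paper's own proof'' to compare against.

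For what it is worth, your outline is essentially the standard Jorge--Meeks argument and is sound in structure: expand $\Phi$ in a local coordinate at each puncture, use the null relation $\mathbf a_i\cdot\mathbf a_i=0$ on the leading coefficient to identify the limit as a great circle traced $d_i$ times, and then run Gauss--Bonnet on the truncated surface $M_\epsilon$, with each boundary component contributing $2\pi d_i$ in the limit; rearranging gives the Jorge--Meeks formula, and $d_i\ge 1$ gives the inequality. The two places you flag as delicate---the uniform error control needed for $C^1$ convergence, and the equivalence of $d_i=1$ with embeddedness of the end---are precisely where the substance of the cited papers lies. One small caution on the last point: the sentence ``$\Gamma_i$ covers $\gamma_i$ once, so the end is a graph over an annulus'' is not yet an argument; you need to show, using the extension of the Gauss map to $p_i$, that the normal direction stabilizes so that the end is eventually a graph over an exterior domain in its limiting plane, not merely that the boundary curve is simple.
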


The equation on the left of \eqref{eq:oss-ineq1} 
is called the {\em Jorge-Meeks formula}.

Moreover, a relation between the total (absolute) curvature 
and the degree of $g$ is as follows. 
Note that since $g$ extends to a holomorphic map $\hat{g}$ 
from a compact Riemann surface $\overline{M}_{\gamma}$ 
to a compact Riemann surface $\mathbb{C}\cup\{\infty\}$, 
we can define the degree of $g$ by $\deg(g):=\deg(\hat{g})$. 
Since the Gaussian curvature of a minimal surface $M\to\mathbb{R}^3$ 
is always non-positive, 
its total absolute curvature $\tau (M):=\int_M|K|\,dA$ is given by 
\[
\tau (M)=\int_M(-K)\,dA.
\]
Recall that the total absolute curvature of a minimal surface in 
$\mathbb{R}^3$ is just the area under the Gauss map 
$g:M\to\mathbb{C}\cup\{\infty\}\cong S^2$, that is, 
\[
\tau (M)=(\text{\rm the area of $S^2$})\,\deg(g)
        =4\pi\,\deg(g)\in 4\pi\,\mathbb{Z}.
\]
(See, for example, (3.11) in \cite{HO} for details.)  
Hence \eqref{eq:oss-ineq1} is rewritten as 
\begin{equation}\label{eq:oss-ineq2}
\deg(g) \geq \gamma + n - 1 
\end{equation}
and we consider sharpness of the inequality \eqref{eq:oss-ineq2}. 

For $n\geq 3$, there exist many examples of minimal surfaces 
which satisfy $\deg(g)=\gamma +n-1$.  (See Figure~\ref{fg:ngeq3}.) 

\begin{figure}[htbp] 
\begin{center}
\begin{tabular}{cccc}
 \includegraphics[width=.22\linewidth]{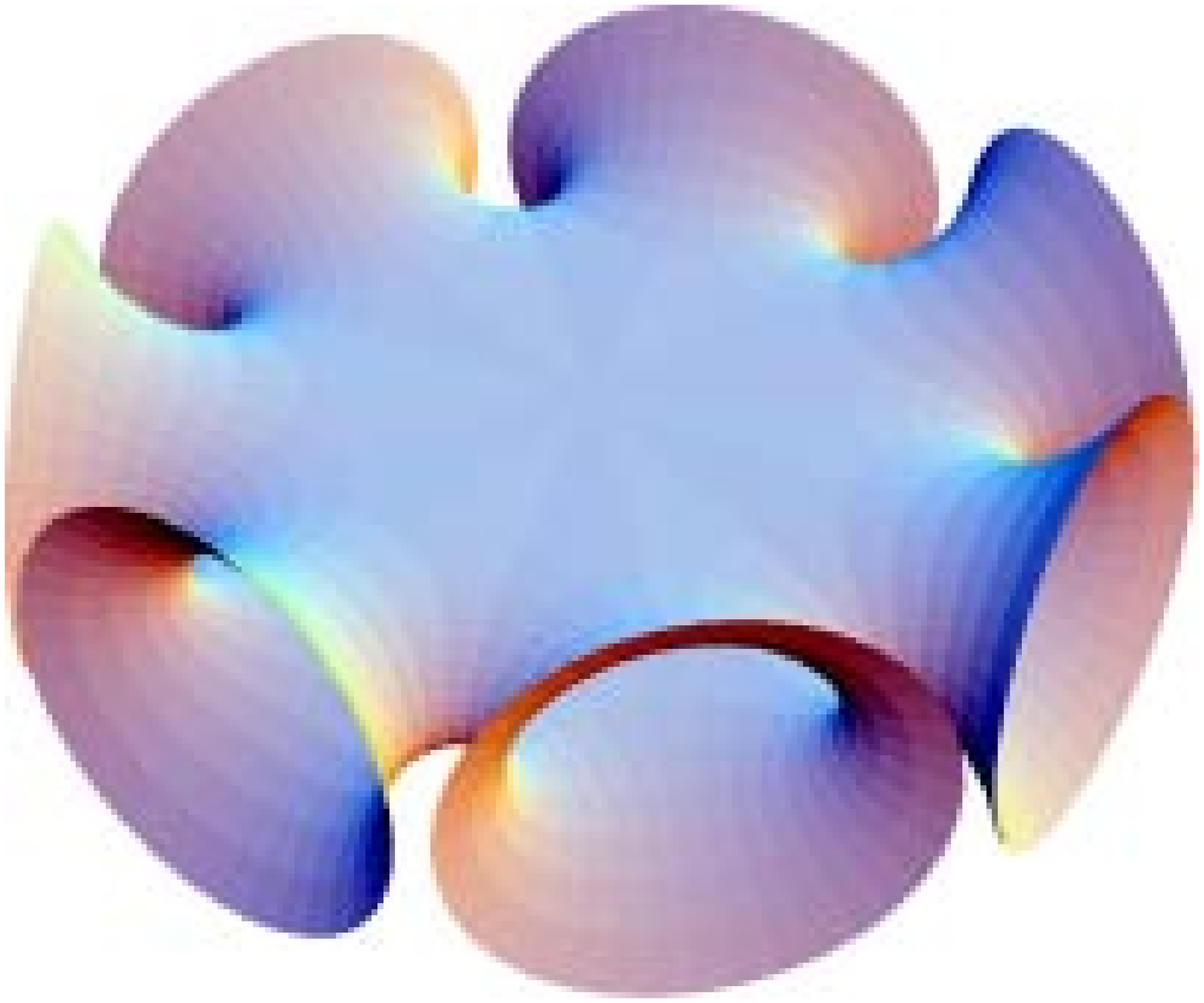} &
 \includegraphics[width=.16\linewidth]{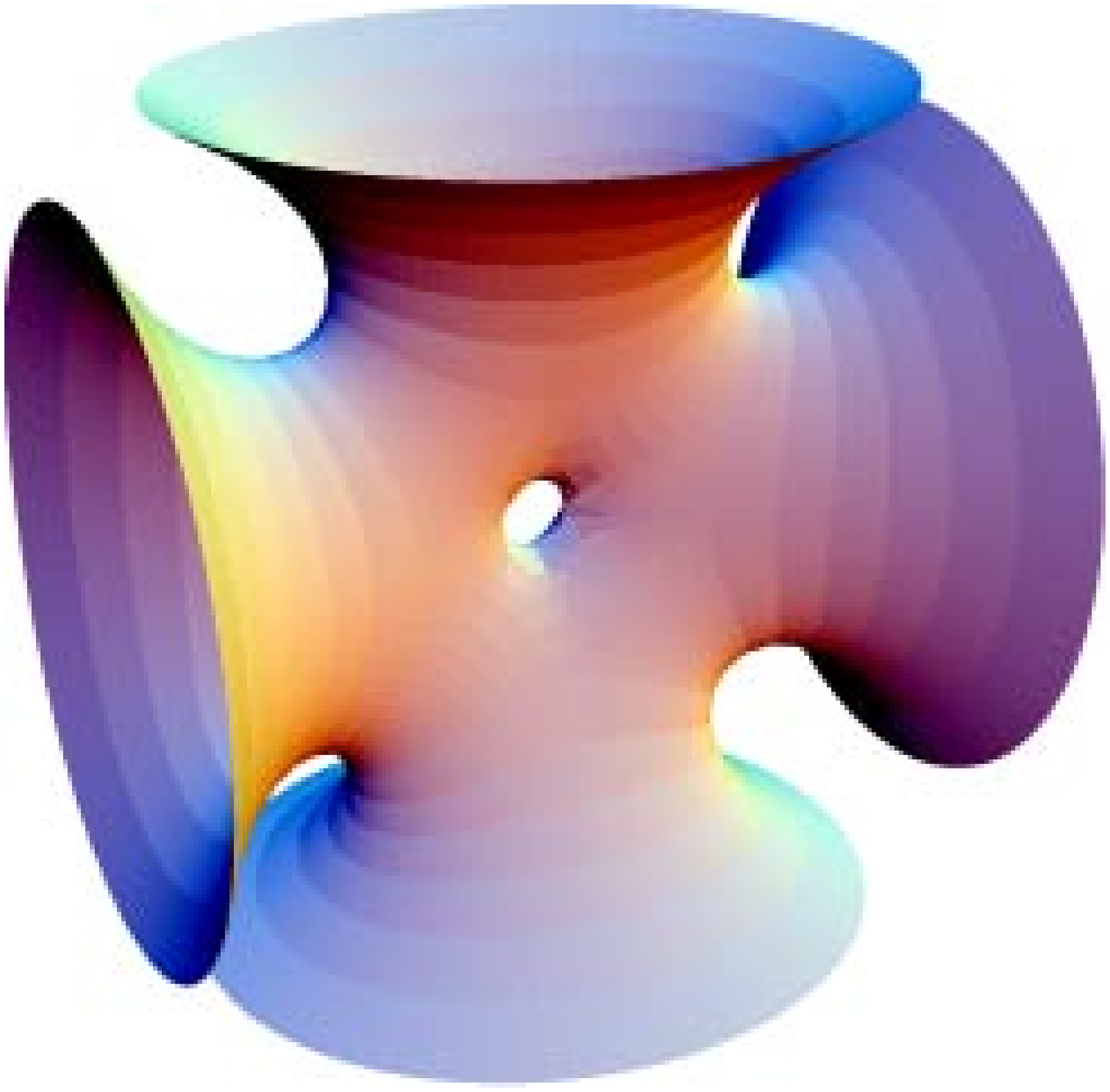} &
 \includegraphics[width=.23\linewidth]{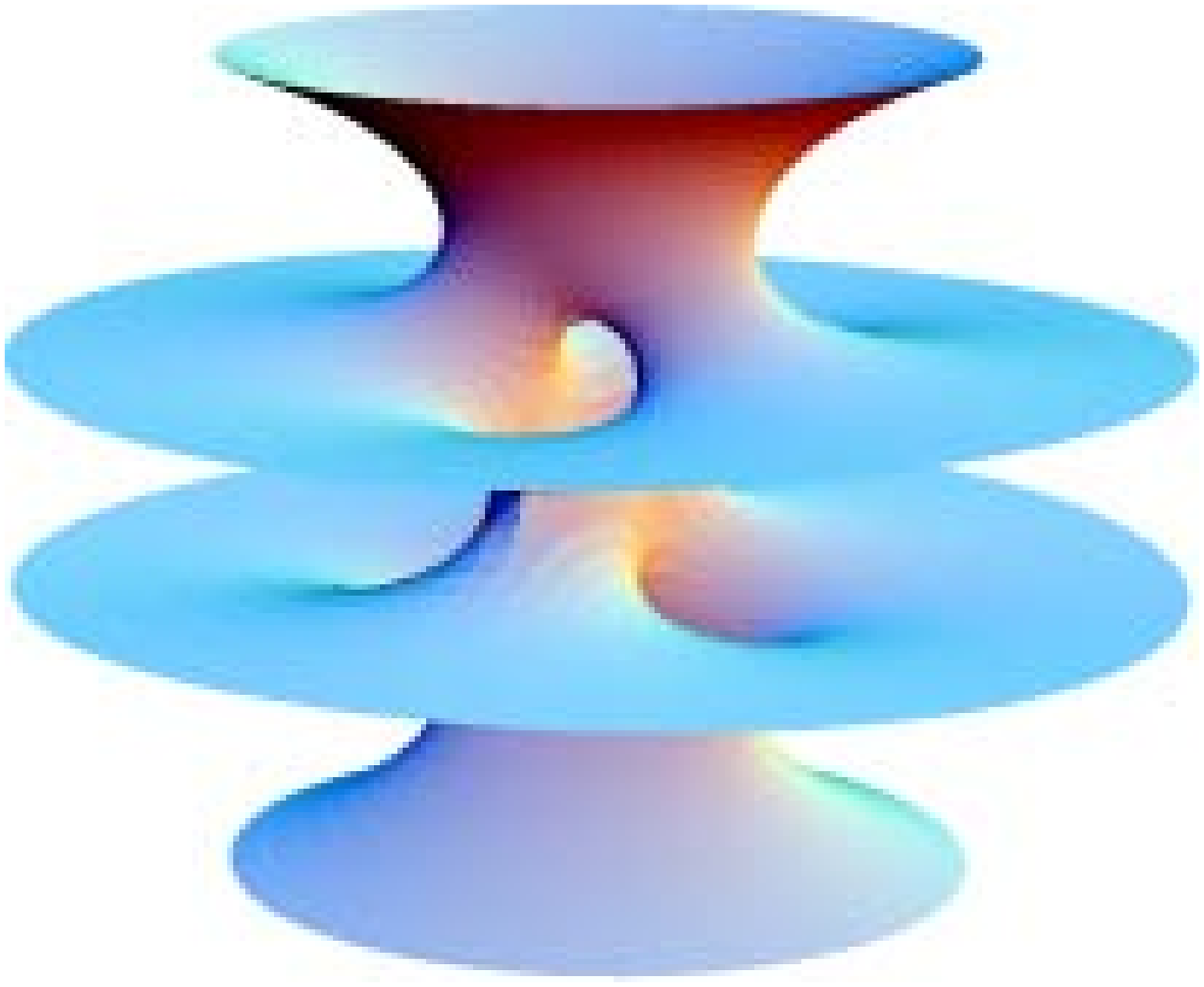} &
 \raisebox{10pt}{\includegraphics[width=.23\linewidth]{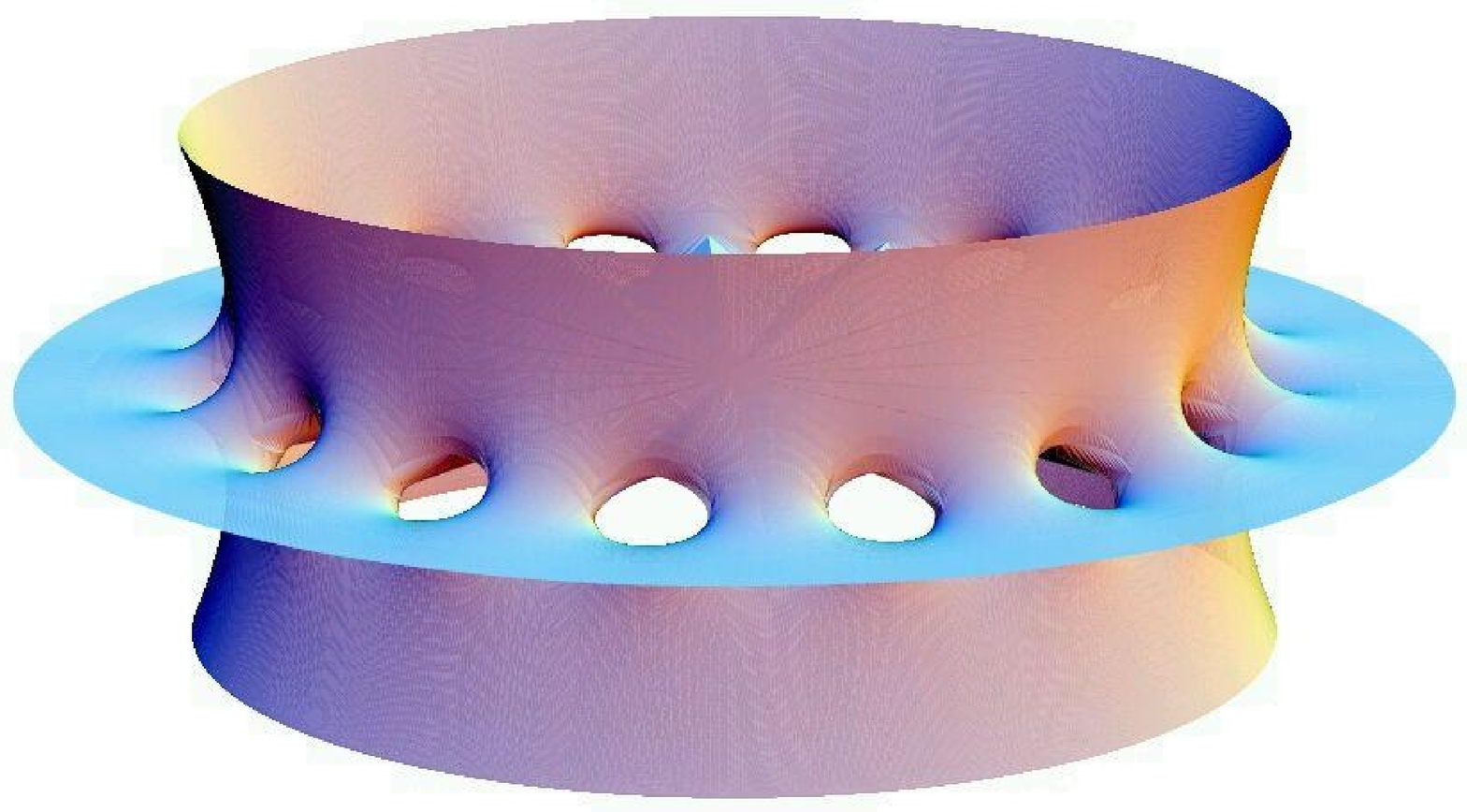}} \\
 $(\gamma ,n)=(0,7)$ & $(\gamma ,n)=(1,4)$ & 
 $(\gamma ,n)=(2,4)$ & $(\gamma ,n)=(14,3)$ \\
\end{tabular}
\caption{Minimal surfaces with $n\ge 3$ satisfying equality in 
         \eqref{eq:oss-ineq2}. 
         For details on these surfaces, see, for instance, 
         \cite{JM}, \cite{BR}, \cite{Wo}, \cite{HM}.}
\label{fg:ngeq3}
\end{center}
\end{figure} 

If $n = 1$, then a minimal surface satisfying 
$\deg(g) =\gamma$ must be a plane.  (See, for instance, \cite[Remark 2.2]{HK}.)  
Thus on a non-planar minimal surface, 
$\deg(g) \geq \gamma +1$. 
The existence of minimal surfaces with $\deg(g) = \gamma +1$ 
was shown by C.~C.~Chen and F.~Gackstatter \cite{CG} (for $\gamma=1,2$), 
N.~Do~Espirito-Santo \cite{ES} (for $\gamma=3$), 
K.~Sato \cite{Sa}, and M.~Weber and M.~Wolf \cite{WW}. 
(See Figure~\ref{fg:n=1}.)  

\begin{figure}[htbp] 
\begin{center}
\begin{tabular}{cccc}
 \includegraphics[width=.20\linewidth]{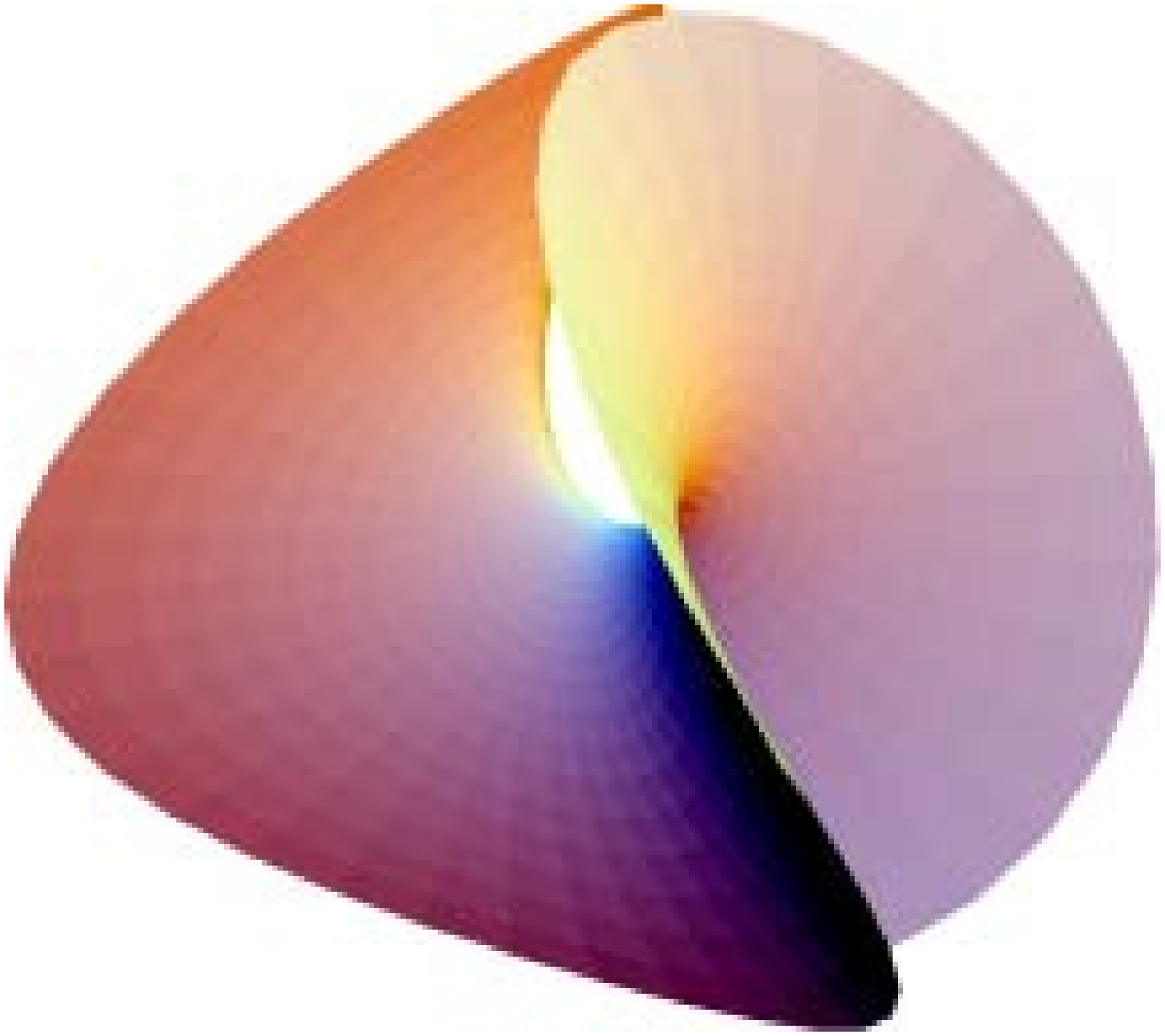} &
 \includegraphics[width=.20\linewidth]{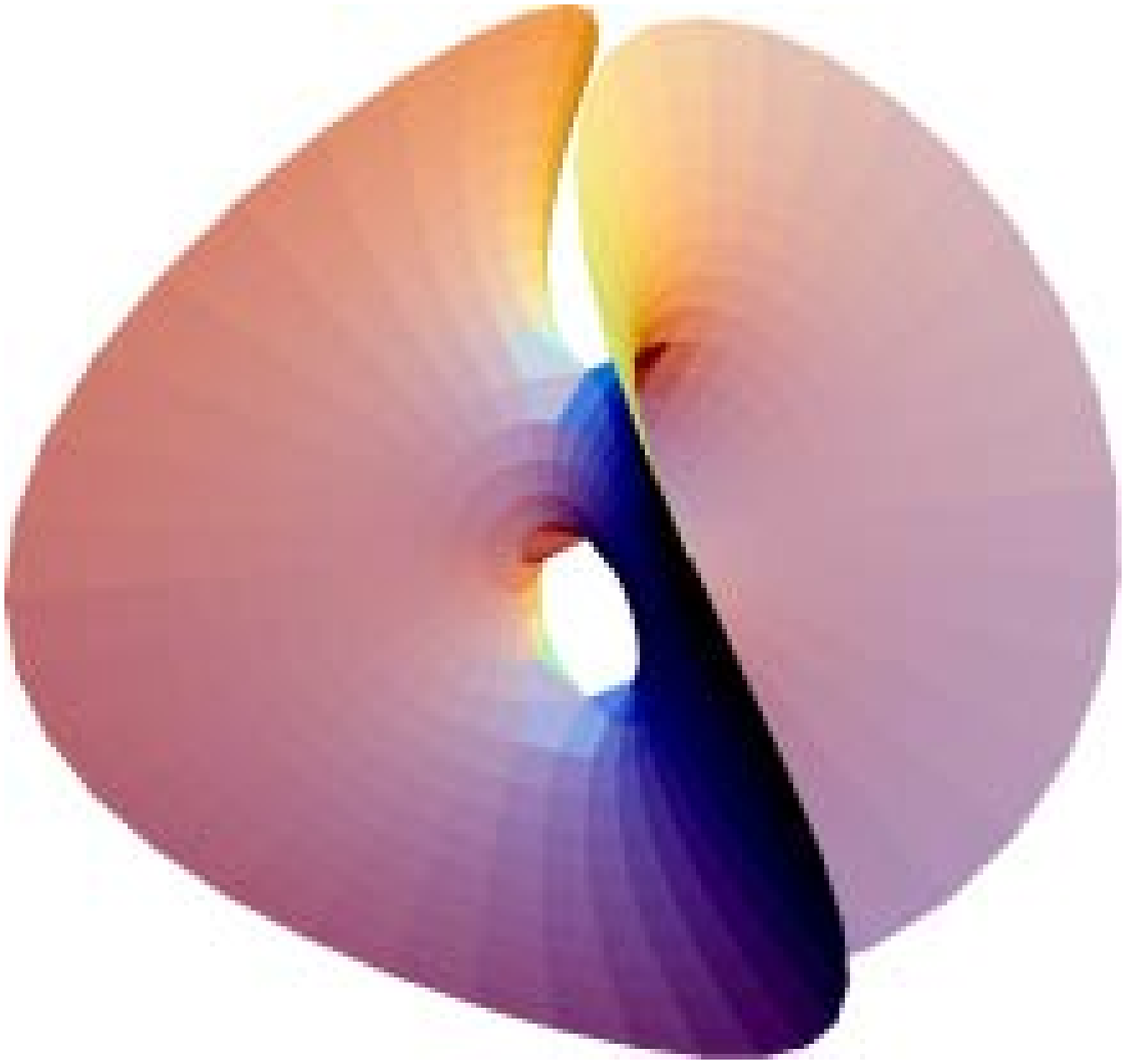} &
 \includegraphics[width=.20\linewidth]{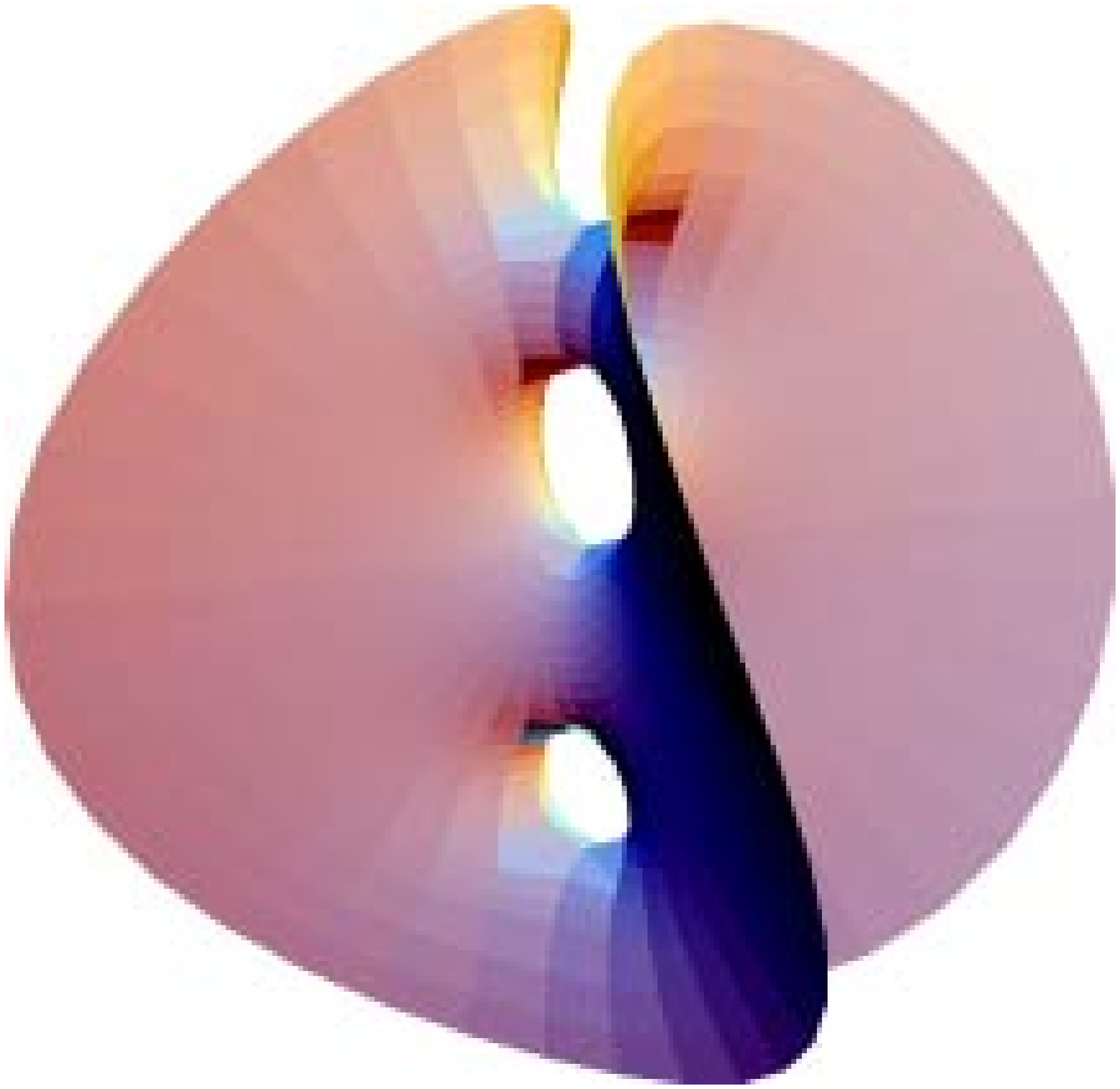} &
 \includegraphics[width=.20\linewidth]{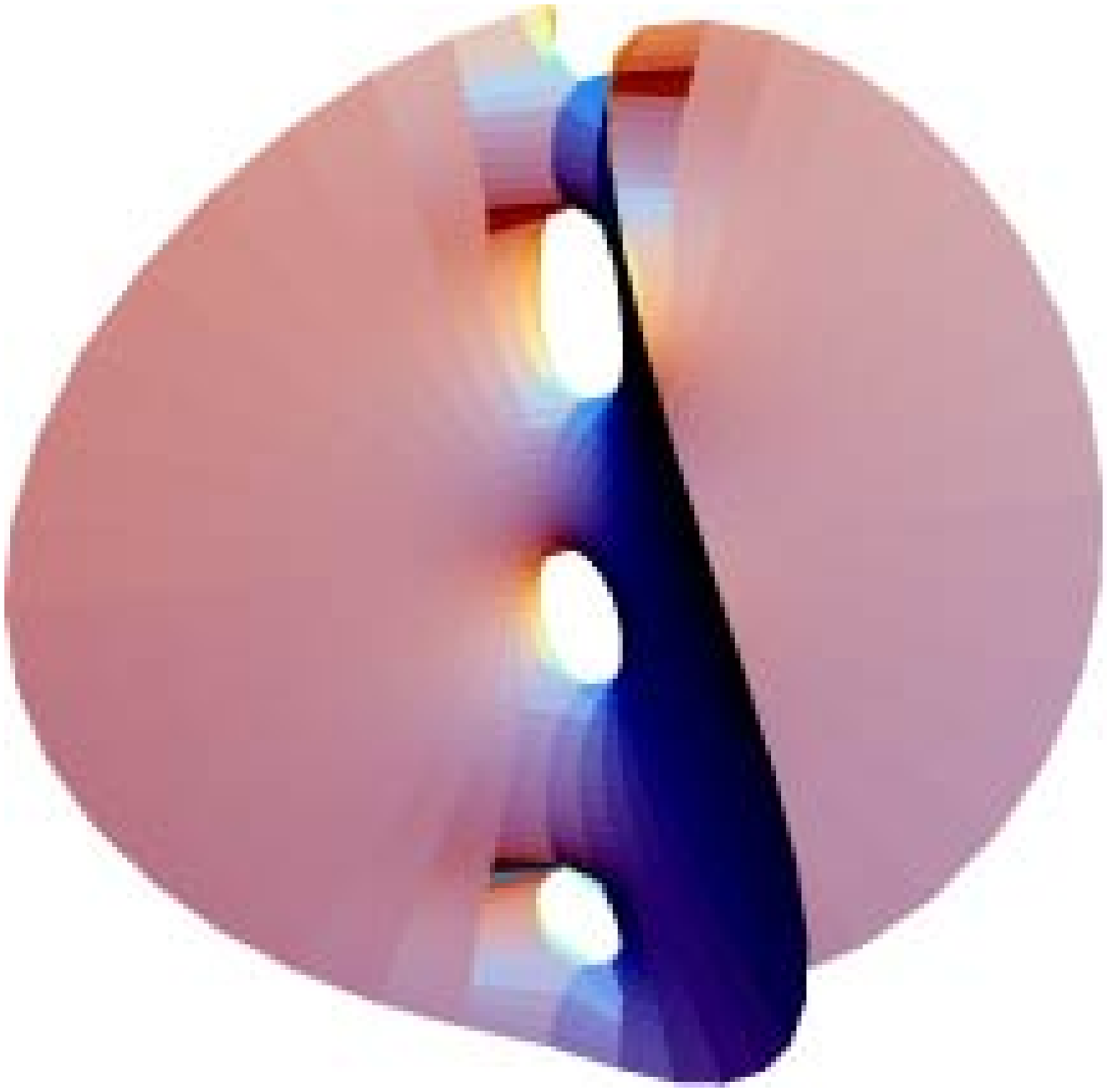} \\
 $\gamma =0$ & $\gamma =1$ & $\gamma =2$ & $\gamma =3$ \\
 Enneper & \multicolumn{2}{c}{Chen-Gackstatter} & Do Espirito-Santo
\end{tabular}
\caption{Minimal surfaces with $n=1$ satisfying $\deg(g) = \gamma +1$.}
\label{fg:n=1}
\end{center}
\end{figure} 

Finally, we consider the case $n = 2$. 
In this case, the following uniqueness result is known: 

\begin{theorem}[\cite{Sc}]
Let $f:M\to\mathbb{R}^3$ be a complete conformal minimal surface 
of finite total curvature.  
If $f$ has two ends and equality holds in \eqref{eq:oss-ineq2}, 
then $f$ must be a catenoid. 
\end{theorem}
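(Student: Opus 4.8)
The plan is to use the equality case to rigidify the Weierstrass data and then classify it. First I would convert the hypotheses into data: with $n=2$, the Jorge--Meeks formula in \eqref{eq:oss-ineq1} together with $\int_M K\,dA=-4\pi\deg(g)$ gives $\deg(g)=\gamma+\tfrac{1}{2}(d_1+d_2)$, so equality in \eqref{eq:oss-ineq2} is equivalent to $d_1+d_2=2$. Because $d_i\ge 1$ always, this forces $d_1=d_2=1$; by the equality clause of Theorem~\ref{th:oss-ineq} both ends are then embedded, and $\Phi$ in \eqref{eq:Phi} has a pole of order exactly $2$ at each puncture $p_1,p_2$. Everything is thus reduced to a complete finite-total-curvature minimal surface on $\overline{M}_\gamma\setminus\{p_1,p_2\}$ with $\deg(\hat g)=\gamma+1$ and two embedded ends, and I must show $\gamma=0$ with the data being exactly the catenoid's.

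Next I would carry out the flux analysis. Since \eqref{eq:period} forces $\mathrm{residue}(\Phi,p_i)\in\mathbb{R}^3$, and the residue theorem on the compact surface $\overline{M}_\gamma$ gives $\mathrm{residue}(\Phi,p_1)+\mathrm{residue}(\Phi,p_2)=\mathbf{0}$, the two residue vectors are opposite. This vector is the flux, or logarithmic growth, of the embedded end, and for an embedded end of finite total curvature it is parallel to the limiting unit normal $N(p_i)$, the axis of the asymptotic half-catenoid (and it vanishes for a planar end). Balancing therefore forces the two limiting normals to be parallel, $N(p_1)=\pm N(p_2)$, just as for the catenoid.

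For the genus-zero case I would classify directly. If $\gamma=0$, then $M$ is $S^2$ minus two points; placing them at $0,\infty$ gives $M=\mathbb{C}\setminus\{0\}$ with coordinate $z$, and $\deg(\hat g)=1$ makes $\hat g$ a Möbius transformation, in particular injective. This rules out $N(p_1)=N(p_2)$ from the previous step (which would force $\hat g(0)=\hat g(\infty)$), so the limiting normals are antipodal; post-composing $\hat g$ with a rotation of $S^2$ and rescaling $z$, I may assume $\hat g(z)=z$. On $\mathbb{C}\setminus\{0\}$ the form $\eta$ is holomorphic and, since the metric \eqref{eq:1stff} is nondegenerate while $g$ has neither zeros nor poles there, nowhere zero; hence $\eta=c\,z^{k}\,dz$ for some $c\in\mathbb{C}^*$, $k\in\mathbb{Z}$. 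Imposing $d_1=d_2=1$ (each component of $\Phi$ having a pole of order at most $2$ at $0$ and at $\infty$, with equality) pins down $k=-2$, so $\eta=c\,dz/z^{2}$. Then $\Phi_1,\Phi_2$ have no residues, while $\mathrm{residue}(\Phi_3,0)=2c$, so \eqref{eq:period} reduces to $c\in\mathbb{R}$, and the surface is the catenoid up to a rotation and a homothety.

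The main obstacle is excluding $\gamma>0$. A naive divisor count is insufficient: combining $\deg(\eta)=2\gamma-2$ with the forced double zeros of $\eta$ at the poles of $g$ and the pole orders dictated by $d_1=d_2=1$, the genus cancels identically, so positive-genus data is not obstructed by the bookkeeping alone, and the period conditions \eqref{eq:period1}--\eqref{eq:period2} must enter in an essential way. The route I would take is Schoen's symmetrization: the two embedded ends are parallel and graphical over their common axis direction, so the Alexandrov moving-plane method can be started from infinity and, via the maximum principle, shows the surface is invariant under reflection in every plane containing the axis, whence it is a surface of revolution and so, being minimal and complete with two ends, the catenoid. Verifying that the reflection argument applies despite the surface being a priori only immersed, using embeddedness of the ends and the balancing of fluxes, is where the real work lies.
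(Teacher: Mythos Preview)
The paper does not prove this theorem; it is quoted as a known result from Schoen \cite{Sc} and used as motivation for Problem~\ref{prob:1}. There is therefore no in-paper proof to compare against, and your outline is in fact a sketch of Schoen's own argument: deduce $d_1=d_2=1$ from Jorge--Meeks, use the residue/flux balance to make the two embedded ends parallel, and then run the Alexandrov moving-plane method to force rotational symmetry and hence the catenoid. Your honest flag that the reflection step must be justified for an immersion with merely embedded ends is exactly where the substance of \cite{Sc} lies.

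One small point in your genus-$0$ reduction: the flux argument as you phrase it (residue parallel to $N(p_i)$, hence the two normals are parallel) uses that the fluxes are nonzero; if both ends were planar the residues vanish and the parallelism of $N(p_1)$ and $N(p_2)$ does not follow from balancing alone. In the genus-$0$ setting this is harmless, since after normalizing $g(z)=z+b$ the reality of the residues of $\Phi$ in \eqref{eq:period} already forces $b=0$ (equivalently, antipodal limiting normals) before one ever asks whether the ends are catenoidal or planar; but it is worth closing that loophole explicitly rather than leaning on the flux direction.
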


It follows that on a non-catenoidal minimal surface with two ends, 
\begin{equation}\label{eq:ineq}
  \deg (g) \ge \gamma + 2 .
\end{equation}
As a consequence, it is reasonable to consider the following problem: 

\begin{problem}\label{prob:1}
For an arbitrary genus $\gamma$, does there exist 
a complete conformal minimal surface 
of finite total curvature with two ends which satisfies equality in 
\eqref{eq:ineq}?
\end{problem}

In the case $\gamma =0$ such minimal surfaces exist, and moreover, 
these minimal surfaces have been classified by 
F.~J.~L\'opez \cite{L}.  (See Figure~\ref{fg:genus0}.) 

\begin{figure}[htbp] 
\begin{center}
\begin{tabular}{ccc}
 \includegraphics[width=.30\linewidth]{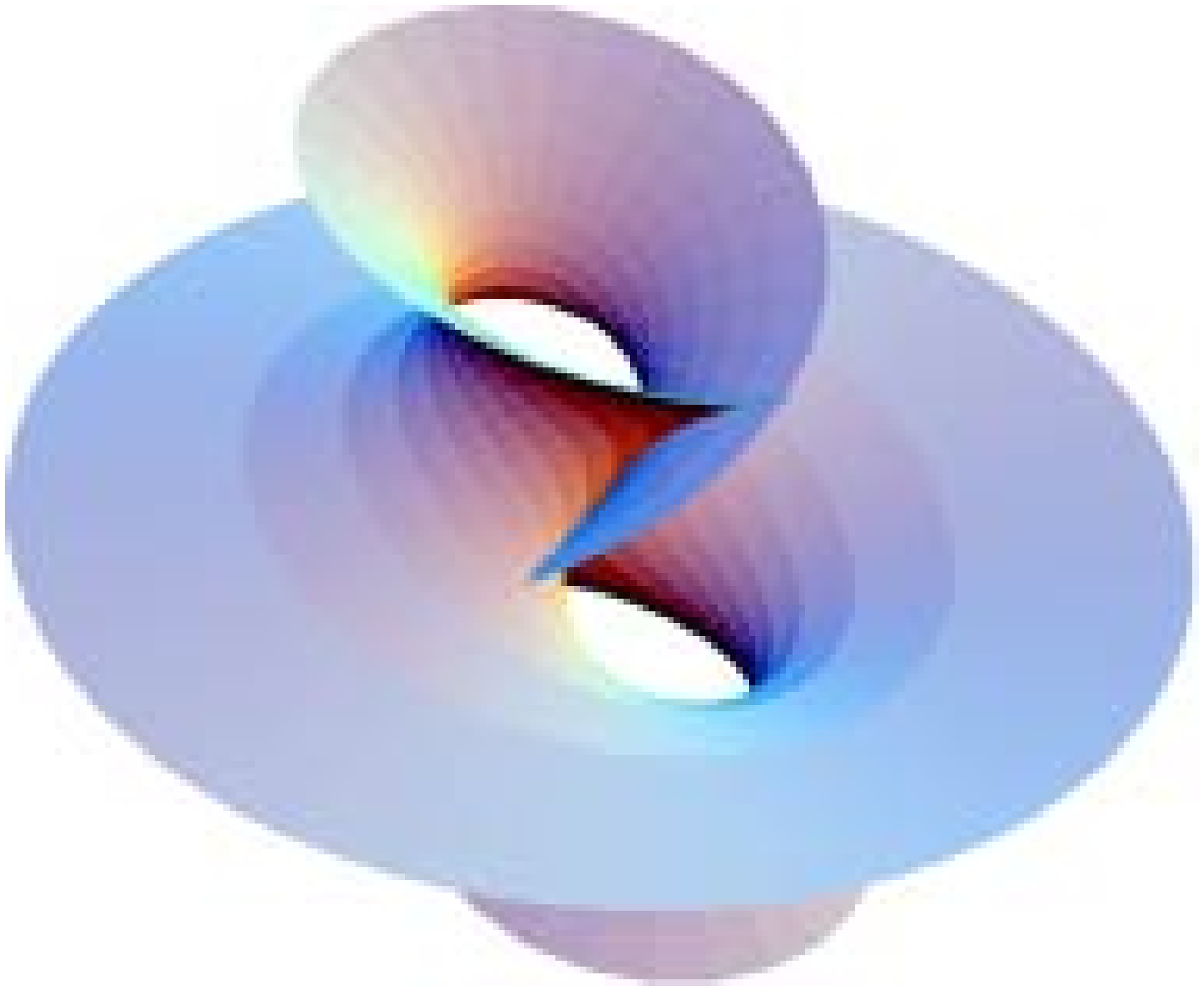} &
 \includegraphics[width=.29\linewidth]{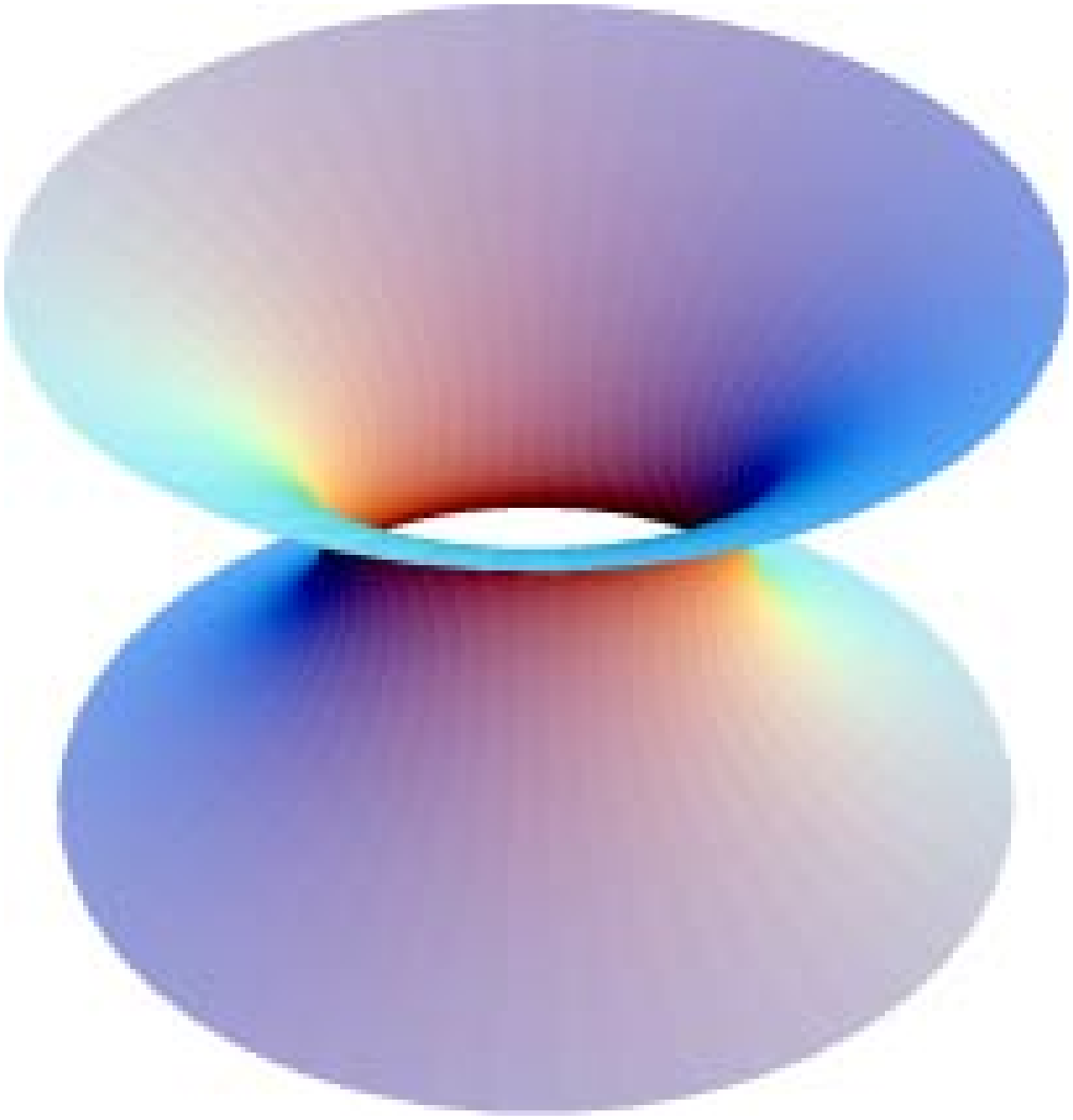} &
 \includegraphics[width=.30\linewidth]{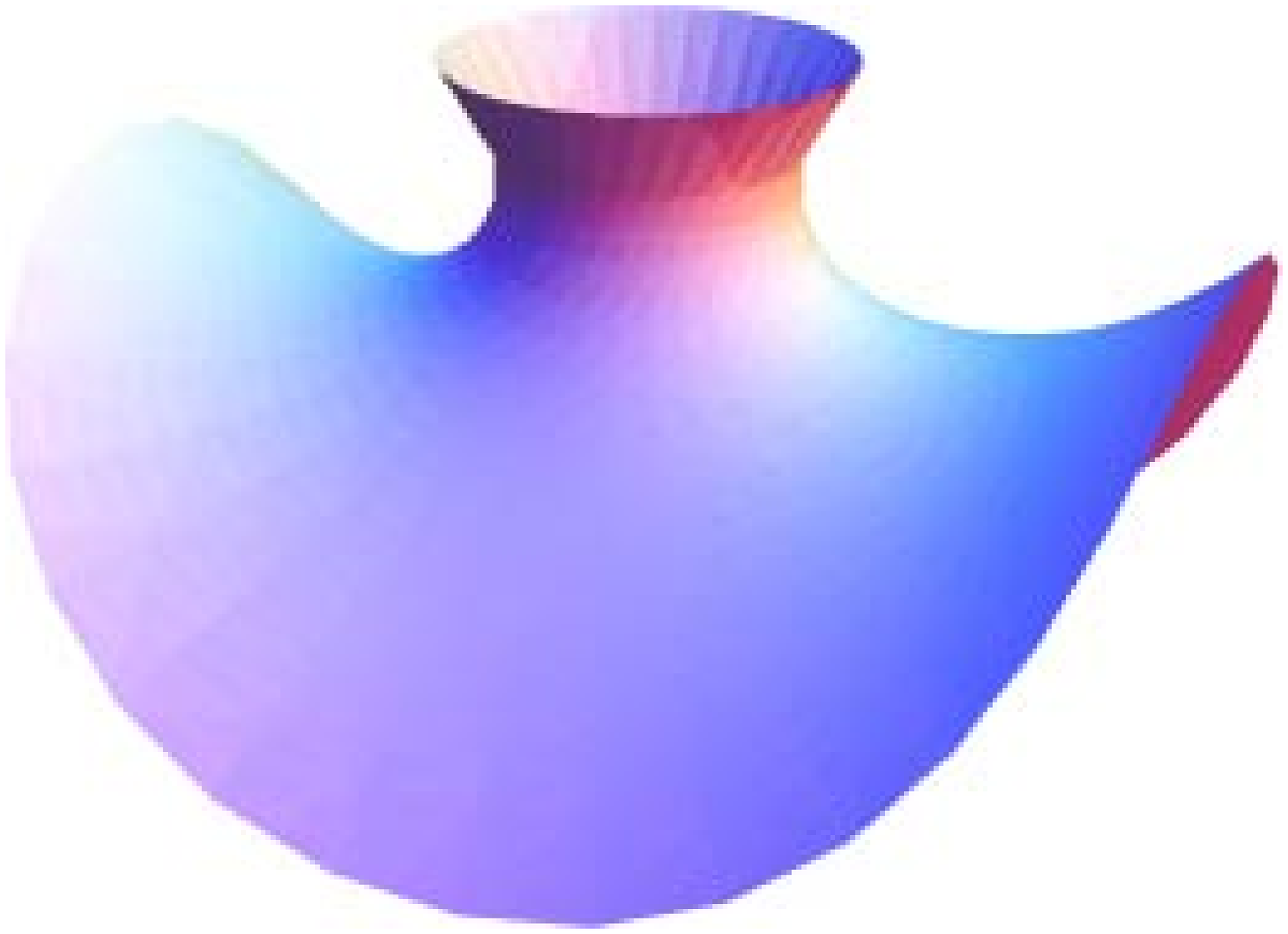}
\end{tabular}
\caption{Examples for $\gamma =0$.  
         The surface in the middle is a double cover of a catenoid.}
\label{fg:genus0}
\end{center}
\end{figure} 

However, for the case $\gamma >0$ no answer to 
Problem~\ref{prob:1} is known. 
Our first main result is to give a partial answer to this problem: 

\begin{main-theorem}\label{main1}
If $\gamma$ is equal to $1$ or an even number, 
there exists a complete conformal minimal 
surface of finite total curvature with two ends 
which satisfies equality in \eqref{eq:ineq}. 
\end{main-theorem}

Note that if we do not assume the equality in \eqref{eq:ineq}, 
then there exists a complete conformal minimal 
surface of finite total curvature with two ends for an arbitrary 
genus $\gamma\ge 0$.  (See, for instance, \cite{FS}.)  

We shall prove Main Theorem~\ref{main1} by explicit constructions. 
(See \S\ref{sec:examples}.) We now discuss 
the asymptotic behavior for our minimal surfaces in terms of $d_i$. 
For a minimal surface as in Problem~\ref{prob:1}, 
we have $(d_1,\,d_2)=(1,\,3),\,(2,\,2)$. 
The case $(d_1,\,d_2)=(1,\,3)$ corresponds to a minimal surface 
with an embedded end and an Enneper's type end. 
Recall that an embedded end is asymptotic to a plane or a catenoid. (See \cite{Sc}.) 
The minimal surface given in \S\ref{sec:genus1} has an embedded end 
which is asymptotic to a plane and $(d_1,\,d_2)=(1,\,3)$. 
(see Corollary~\ref{co:genus1}.) 
The minimal surface introduced in \S\ref{sec:oddgenus} is another example 
with an embedded end which is asymptotic to 
a half catenoid and $(d_1,\,d_2)=(1,\,3)$. 
The minimal surfaces with $(d_1,\,d_2)=(2,\,2)$ are obtained 
in \S\ref{sec:evengenus}. 
(See Corollary~\ref{co:evengenus}.) 

The minimal surfaces given in Corollary~\ref{co:genus1} and 
Corollary~\ref{co:evengenus} have symmetry groups 
with $4(\gamma+1)$ elements. 
Next we consider the uniqueness theorem for the symmetries. 
Uniqueness is also one of the important problems for minimal surfaces, 
and there are many uniqueness theorems. 
(See \cite{MaW, HM}.) 
Our other main theorem is as follows. 

\begin{main-theorem}\label{main2}
Let $f:M\to \mathbb{R}^3$ be a complete conformal minimal surface 
of finite total curvature with two ends and genus $\gamma$.  
Suppose that $f$ satisfies equality in \eqref{eq:ineq} 
and has $4(\gamma+1)$ symmetries.  
We assume either $\gamma =1$ and $(d_1,\,d_2)=(1,\,3)$, or 
$\gamma$ is an even number and $(d_1,\,d_2)=(2,\,2)$.  
Then $f$ is one of the minimal surfaces 
given in Main Theorem~$\ref{main1}$. 
\end{main-theorem}

At the end of this section, 
we discuss our work from the point of view of the Bj\"orling problem 
for minimal surfaces. 
The classical Bj\"orling problem is to determine 
a piece of a minimal surface containing a given analytic strip.  
This was named after E.~G.~Bj\"orling in 1844. 
H.~A.~Schwarz gave an explicit solution to it. 
(See, for instance, \cite{N}.)  
Recently, Mira \cite{M} used the solution to the Bj\"orling problem 
to classify a certain class of minimal surfaces of genus $1$. 
Also, Meeks and Weber \cite{MW} produced an infinite sequence of complete 
minimal annuli by using the solution to the Bj\"orling problem and then gave 
a complete answer as to which curves appear as the singular set of a 
Colding-Minicozzi limit minimal lamination. 
Hence it is useful to study minimal surfaces from the point of view of 
the Bj\"orling problem.  
However, the existence of minimal surfaces of higher genus 
derived from the solution to the Bj\"orling problem seems to be unknown.  
In Section~\ref{sec:evengenus}, we show that our minimal surfaces, 
which have even numbers for the genus, are solutions to the Bj\"orling problem, 
and the generating curves are closed plane curves.  

The paper is organized as follows:  
Section~\ref{sec:examples} contains constructions of concrete examples 
to prove Main Theorem~\ref{main1} and is divided into two subsections. 
The genus $1$ case is provided in Section~\ref{sec:genus1} and even number 
cases are provided in Section~\ref{sec:evengenus}.  
Section~\ref{sec:evengenus} also 
contains the result from the point of view of the Bj\"orling problem. Moreover, we prove our uniqueness result in Section~\ref{uniqueness}. 
In Section~\ref{sec:remaining-problems} we refer to remaining problems 
related to our work.

\section{Construction of surfaces for Main Theorem~\ref{main1}} 
\label{sec:examples} 

In this section we will construct the surfaces for proving 
Main Theorem~\ref{main1} in the introduction. 
We will use the Weierstrass representation in Theorem~\ref{th:w-rep}, 
for which we need a Riemann surface $M$, 
a meromorphic function $g$, and a holomorphic differential $\eta$. 

\subsection{The case $\gamma =1$} 
\label{sec:genus1} 

Let $\overline{M}_{\gamma}$ be the Riemann surface 
\[
  \overline{M}_\gamma
 =\left\{(z,w)\in (\mathbb{C}\cup\{\infty\})^2\,\left|\,
         w^{\gamma +1}=z(z^2-1)^\gamma\right.\right\}. 
\]
The surface we will consider is 
\[
M=\overline{M}_\gamma\setminus\left\{(0,0), (\infty, \infty)\right\}, 
\]
a Riemann surface of genus $\gamma $ from which two points have been removed. 
We want to define a complete conformal minimal immersion of $M$ into 
$\mathbb{R}^3$ by the Weierstrass representation in Theorem~\ref{th:w-rep}. 
To do this, set
\[
g=cw, \qquad  \eta = i \frac{dz}{z^2w},  
\]
where $c\in\mathbb{R}_{>0}$ is a positive constant to be determined. 

Let $\Phi$ be the $\mathbb{C}^3$-valued differential as in \eqref{eq:Phi}. 
We shall prove that \eqref{eq:surf} is a conformal minimal immersion of $M$. 

We begin with, we show by straightforward calculation how the following conformal 
diffeomorphisms $\kappa_1$ and $\kappa_2$ act on $\Phi$.  

\begin{lemma}[Symmetries of the surface]\label{lm:sym1}
Consider the following conformal mappings of $M${\rm :}
\[
\kappa_1(z,w)=(\bar z, \bar w), \qquad
\kappa_2(z,w)=(-z, e^{\pi i/(\gamma +1)}w).
\]
Then, 
\[
\kappa_1^*\Phi
=\begin{pmatrix}-1 & 0 & 0 \\
                 0 & 1 & 0 \\
                 0 & 0 &-1 \end{pmatrix}\overline{\Phi}, \qquad
\kappa_2^*\Phi
=\begin{pmatrix}-\cos\frac{\pi}{\gamma +1} & \sin\frac{\pi}{\gamma +1} & 0 \\
                -\sin\frac{\pi}{\gamma +1} &-\cos\frac{\pi}{\gamma +1} & 0 \\
                 0                         &                         0 &-1 
 \end{pmatrix}\Phi .
\]
\end{lemma}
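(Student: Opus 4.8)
The plan is to treat the two maps in parallel: first check that each is a genuine conformal self-map of $M$, then pull back the Weierstrass data $(g,\eta)$ termwise and read off the matrices.

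First I would verify well-definedness. For $\kappa_2$ this amounts to checking that $(-z,\,e^{\pi i/(\gamma+1)}w)$ again satisfies the defining equation: raising the second coordinate to the power $\gamma+1$ produces the factor $e^{\pi i}=-1$, while $(-z)((-z)^2-1)^\gamma=-z(z^2-1)^\gamma$, so the two sides match; one also notes that the two punctures $(0,0)$ and $(\infty,\infty)$ are fixed, so $\kappa_2$ descends to $M$. For $\kappa_1$ the defining equation is preserved because conjugating both sides of $w^{\gamma+1}=z(z^2-1)^\gamma$ is again an instance of the same equation, the punctures are again fixed, and $\kappa_1$ is anti-holomorphic (a conjugation). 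I record the elementary pullbacks $z\circ\kappa_1=\bar z$, $w\circ\kappa_1=\bar w$, $z\circ\kappa_2=-z$, $w\circ\kappa_2=e^{\pi i/(\gamma+1)}w$, together with $\kappa_1^*dz=\overline{dz}$ and $\kappa_2^*dz=-dz$.

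Next I would compute the pullbacks of $g$ and $\eta$. Writing $\zeta:=e^{\pi i/(\gamma+1)}$ (so $\zeta^{\gamma+1}=-1$), the holomorphic case gives $g\circ\kappa_2=\zeta g$ and, directly from $\eta=i\,dz/(z^2w)$, $\kappa_2^*\eta=-\zeta^{-1}\eta$. For the anti-holomorphic map, the place where the realness of $c$ enters is $g\circ\kappa_1=c\bar w=\overline{cw}=\bar g$; and $\kappa_1^*\eta=i\,\overline{dz}/(\bar z^2\bar w)=i\,\overline{dz/(z^2w)}=-\bar\eta$, the final sign coming from $\bar i=-i$. Since pullback is $\mathbb{C}$-linear on forms (the anti-holomorphy only affects $\kappa_1^*dz$), these substitute cleanly into each entry of $\Phi$.

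With these in hand the two results follow by substitution. For $\kappa_1$ the identifications $(1-\bar g^2)(-\bar\eta)=-\overline{\Phi_1}$, $i(1+\bar g^2)(-\bar\eta)=\overline{\Phi_2}$, and $2\bar g(-\bar\eta)=-\overline{\Phi_3}$ are immediate and give $\mathrm{diag}(-1,1,-1)$ acting on $\overline{\Phi}$. For $\kappa_2$ the first two components mix, so I would invert the linear relations $\Phi_1=(1-g^2)\eta$, $\Phi_2=i(1+g^2)\eta$ to obtain $\eta=\tfrac12(\Phi_1-i\Phi_2)$ and $g^2\eta=-\tfrac12(\Phi_1+i\Phi_2)$, substitute these into $\kappa_2^*\Phi_1=-\zeta^{-1}(1-\zeta^2g^2)\eta$ and $\kappa_2^*\Phi_2=-i\zeta^{-1}(1+\zeta^2g^2)\eta$, and collect coefficients using $\zeta+\zeta^{-1}=2\cos\frac{\pi}{\gamma+1}$ and $\zeta-\zeta^{-1}=2i\sin\frac{\pi}{\gamma+1}$; the third component is simply $\kappa_2^*\Phi_3=2\zeta g\cdot(-\zeta^{-1}\eta)=-\Phi_3$. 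I expect no conceptual obstacle: the only delicate point is the anti-holomorphic pullback for $\kappa_1$, where one must correctly obtain $\kappa_1^*\eta=-\bar\eta$ (tracking both the conjugation of the form and the factor $\bar i=-i$) and observe that the hypothesis $c\in\mathbb{R}$ is exactly what forces $g\circ\kappa_1=\bar g$; the $\kappa_2$ part is then routine rotation-matrix bookkeeping once the trigonometric substitutions are made.
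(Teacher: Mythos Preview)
Your proposal is correct and follows exactly the approach the paper indicates: the paper simply states that the lemma is shown ``by straightforward calculation'' and gives no further details, so your explicit computation of the pullbacks of $g$ and $\eta$ and the subsequent substitution into the components of $\Phi$ is precisely what is intended. The only points worth double-checking---the anti-holomorphic pullback $\kappa_1^*\eta=-\bar\eta$ and the use of $c\in\mathbb{R}_{>0}$ to get $g\circ\kappa_1=\bar g$---you have handled correctly.
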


Since \eqref{eq:1stff} gives a complete Riemannian metric on $M$ 
(see Table~\ref{tb:genus1}), 
it suffices to show that $f$ is well-defined on $M$ 
for the right choice of $c$. 

\begin{table}[htbp] 
\begin{center}
\begin{tabular}{|c||c|c|c|c|}\hline 
$(z,w)$ & $(0,0)$ & $(1,0)$ & $(-1,0)$ & $(\infty,\infty)$ \\ \hline\hline
$g$     & $0^1$ & $0^\gamma$ & $0^\gamma$ & $\infty^{2\gamma +1}$ \\ \hline 
$\eta$  & $\infty^{\gamma +3}$ &       &       & $0^{3\gamma +1}$ \\ \hline 
\end{tabular}
\caption{Orders of zeros and poles of $g$ and $\eta$.}
\label{tb:genus1}
\end{center}
\end{table} 

\begin{theorem}\label{existence1-thm} 
For any positive number $\gamma$, 
there exists a unique positive constant $c\in\mathbb{R}_{>0}$ for which the 
immersion $f$ given in \eqref{eq:surf} is well-defined on $M$.   
\end{theorem}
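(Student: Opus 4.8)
The plan is to verify the period condition \eqref{eq:period}, which by the remark following Theorem~\ref{th:w-rep} splits into \eqref{eq:period1} and \eqref{eq:period2}, and to show that it pins down a single admissible value of $c$. First I would dispose of \eqref{eq:period2}: since $g\eta = cw\cdot i\,dz/(z^2 w)=ci\,dz/z^2 = d(-ci/z)$ is exact on $M$, we have $\oint_\ell g\eta=0$ for every $\ell$, so \eqref{eq:period2} holds automatically and independently of $c$. Next I would dispose of the two ends using Lemma~\ref{lm:sym1}. The map $\kappa_2$ fixes each of $(0,0)$ and $(\infty,\infty)$ and satisfies $\kappa_2^*\Phi=(\text{the orthogonal matrix of Lemma~\ref{lm:sym1}})\,\Phi$. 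Since $\kappa_2$ is holomorphic and carries a small positively oriented loop about an end to a homologous one, the residue vector $R=\oint_\ell\Phi$ obeys $R=(\text{that matrix})\,R$; and for $\gamma\ge 1$ the matrix has no eigenvalue $1$, so $R=\bm{0}$. Thus the residues of $\Phi$, and hence of $\eta$ and of $g^2\eta$, vanish at both ends, so the remaining periods descend to $H_1(\overline M_\gamma)$, which has rank $2\gamma$, and it suffices to satisfy \eqref{eq:period1} there.

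Writing $I_1(\ell)=\oint_\ell dz/(z^2 w)$ and $I_2(\ell)=\oint_\ell w\,dz/z^2$, and using $\eta=i\,dz/(z^2w)$ and $g^2\eta=c^2 i\,w\,dz/z^2$, condition \eqref{eq:period1} becomes
\begin{equation*}
I_1(\ell)+c^2\,\overline{I_2(\ell)}=0\qquad(\forall\,\ell\in H_1(\overline M_\gamma)).
\tag{$\star$}
\end{equation*}
The key structural input is that $(\star)$ respects the symmetry. Setting $\mu=e^{\pi i/(\gamma+1)}$, a direct computation gives $\kappa_2^*\eta=-\mu^{-1}\eta$ and $\kappa_2^*(g^2\eta)=-\mu\,g^2\eta$, hence $I_1(\kappa_{2*}\ell)=-\mu^{-1}I_1(\ell)$ and $I_2(\kappa_{2*}\ell)=-\mu\,I_2(\ell)$; as $|\mu|=1$, $(\star)$ holds at $\ell$ if and only if it holds at $\kappa_{2*}\ell$. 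Thus $\eta$ and $g^2\eta$ are $\kappa_2$-eigenforms, so $I_1$ and $I_2$ are supported on single, mutually complex conjugate $\kappa_{2*}$-eigenlines $E$ and $\overline E$ of $H_1(\overline M_\gamma;\mathbb C)$. I would then check that $\kappa_{2*}$ has $2\gamma$ distinct eigenvalues on $H_1(\overline M_\gamma;\mathbb C)$ — this follows from the Chevalley--Weil computation that every nontrivial character occurs with multiplicity one in $H^{1,0}(\overline M_\gamma)$ for the cyclic cover $w^{\gamma+1}=z(z^2-1)^\gamma$ — so that $E\ne\overline E$. Decomposing a real cycle $\ell$ along $E\oplus\overline E$ with coordinate $s$ yields $I_1(\ell)=A_0\,s$ and $I_2(\ell)=B_0\,\bar s$ for fixed constants $A_0,B_0$, whereupon $(\star)$ collapses to the single equation $A_0+c^2\,\overline{B_0}=0$, i.e. $c^2=-A_0/\overline{B_0}$. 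In particular, uniqueness of a positive $c$ is immediate.

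It remains to prove existence, i.e. that $-A_0/\overline{B_0}$ is a positive real number; this is the main obstacle. Reality I would extract from the reflection $\kappa_1$: since $\kappa_1^*\eta=-\overline{\eta}$ and $\kappa_1$ interchanges $E$ and $\overline E$ (because $\kappa_1\kappa_2\kappa_1=\kappa_2^{-1}$), the two periods $A_0$ and $B_0$ are related by complex conjugation, forcing $-A_0/\overline{B_0}\in\mathbb R$. Positivity is the genuinely computational point. Here I would represent a generator of $E$ by a cycle lying over the real segment joining the two regular branch points $z=\pm1$ — crucially this avoids the ends $z=0,\infty$, so both periods converge — and evaluate $A_0$ and $B_0$ as Euler Beta integrals of the shape $\int_0^1 z^{p}(1-z^2)^{q}\,dz$. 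Comparing the two resulting Gamma-function expressions shows that the real number $-A_0/\overline{B_0}$ is positive, so that $c=\sqrt{-A_0/\overline{B_0}}$ is the unique admissible constant.

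In summary, the symmetry reductions of the first three steps are essentially formal once Lemma~\ref{lm:sym1} is in hand, and they reduce the full period problem to one real equation for $c$. The only place the explicit form of $\overline M_\gamma$ enters quantitatively, and the step I expect to require the most care, is the final evaluation of the two Beta-type period integrals and the verification that their ratio has the sign needed for $c^2>0$.
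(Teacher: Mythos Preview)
Your eigenspace reduction is a legitimate alternative to the paper's route, and the multiplicity-one input you invoke does hold: the forms $\omega_k=(z^2-1)^{k-1}\,dz/w^k$ for $k=1,\dots,\gamma$ give a $\kappa_2^*$-eigenbasis of $H^{1,0}(\overline M_\gamma)$ with pairwise distinct eigenvalues $-\mu^{-k}$, and together with their conjugates on $H^{0,1}$ this yields $2\gamma$ distinct eigenvalues on $H^1$, so $\dim E=1$ and your collapse to a single equation is valid. The paper is more hands-on: it fixes one explicit loop $\ell'$ encircling $[0,1]$ in the $z$-plane, uses Lemma~\ref{lm:sym1} to generate all of $H_1(M)$ from it, and then evaluates the two periods directly as positive real integrals $A_\gamma,B_\gamma$, obtaining $c=\sqrt{A_\gamma/B_\gamma}$ in one stroke with no eigenspace bookkeeping. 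Your $\kappa_1$-reality step also works but needs a little more than the one line you give it: what $\kappa_1$ actually delivers is $A_0=\beta\,\overline{A_0}$ and $\overline{B_0}=\beta\,B_0$ for the \emph{same} scalar $\beta$ (the matrix of the $\mathbb C$-linear map $\kappa_{1*}:E\to\overline E$), and from these two relations $A_0/\overline{B_0}\in\mathbb R$ follows.

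There is, however, a concrete slip in your final step. The real segment joining $z=\pm1$ passes straight through $z=0$, which is precisely the end $(0,0)$; by Table~\ref{tb:genus1}, $\eta$ has a pole of order $\gamma+3$ there and $g^2\eta$ one of order $\gamma+1$. So the claim that your cycle ``crucially avoids the ends $z=0,\infty$, so both periods converge'' is false, and you cannot read off $A_0,B_0$ as Beta integrals along that path. The paper meets exactly the same obstacle on its path $\ell$ through $(0,0)$ and resolves it by subtracting explicit exact $1$-forms carrying the principal parts,
\[
\frac{dz}{z^2w}-\frac{\gamma+1}{\gamma+2}\,d\!\left(\frac{z^2-1}{zw}\right)=\frac{\gamma}{\gamma+2}\,\frac{dz}{w},
\qquad
\frac{w}{z^2}\,dz+\frac{\gamma+1}{\gamma}\,d\!\left(\frac{w}{z}\right)=\frac{2w}{z^2-1}\,dz,
\]
after which both integrands are regular at $(0,0)$ and the periods become genuine convergent integrals over $[0,1]$ of the Beta type you anticipate, with the sign needed for $c^2>0$. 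Your plan goes through once you insert this device (or an equivalent regularisation); as written, the positivity computation cannot be carried out.
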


\begin{proof}
To establish this theorem we must show \eqref{eq:period} 
in Theorem~\ref{th:w-rep}.  
We will prove \eqref{eq:period1} and \eqref{eq:period2}, respectively. 
\eqref{eq:period2} follows from the exactness of $g\eta =icdz/z^2=d(-ic/z)$, 
and thus we will only have to show \eqref{eq:period1}. 
We first check the residues of $\eta$ and $g^2\eta$ at the ends 
$(0,0)$, $(\infty,\infty)$. 
At the end $(0,0)$, $w$ is a local coordinate for the Riemann surfce 
$\overline{M}_\gamma$, and then
$z=z(w)=w^{\gamma +1}\{(-1)^\gamma +\mathcal{O}(w^{2\gamma +2})\}$.  
We have
\[
\eta    = \left(\frac{\alpha_1}{w^{\gamma +3}}
                +\mathcal{O}(w^{\gamma -1})\right)dw
\qquad\text{and}\qquad
g^2\eta = \left(\frac{\alpha_2}{w^{\gamma +1}}
                +\mathcal{O}(w^{\gamma +1})\right)dw, 
\]
where $\alpha_j\in\mathbb{C}$ ($j=1,2$) are constants.  
These imply that both $\eta$ and $g^2\eta$ have no residues at $(0,0)$.  
Then the residue theorem yields that they have no residues at 
$(\infty,\infty)$ as well.  

We next consider path-integrals along topological $1$-cycles on 
$\overline{M}_\gamma$.  
We will give a convenient $1$-cycle. 

Define a $1$-cycle on $\overline{M}_\gamma$ as 
\begin{align*}
\ell
&=\left\{\left.(z,w)=\left(-t,\,
    \sqrt[\gamma+1]{-t(1-t^2)^\gamma}\,e^{\gamma\pi i/(\gamma +1)}\right)
   \,\right|\,-1\le t \le 0\right\} \\
&\qquad\qquad
  \cup 
  \left\{\left.(z,w)=\left(t,\,
    \sqrt[\gamma +1]{t(1-t^2)^\gamma}\,e^{-\gamma\pi i/(\gamma +1)}\right)
   \,\right|\,0\le t \le 1\right\}. 
\end{align*}
Recall that $(0,0)$ corresponds to the end of $f$.  
Avoiding the end $(0,0)$, we can deform $\ell$ to a $1$-cycle 
$\ell'$ on $M$ which is projected to a loop winding once around 
$[0,1]$ in the $z$-plane.  (See Figure~\ref{fg:loop1}.)  

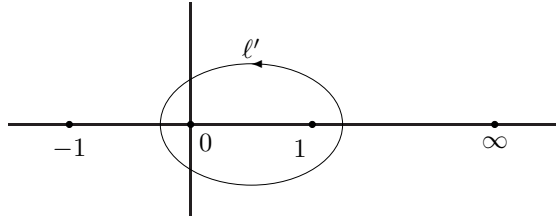
\begin{figure}[htbp] 
\begin{center}
\unitlength=1.15pt
\begin{picture}(300,70)
\put(60,30){\line(1,0){180}}
\thicklines
\put(120,0){\line(0,1){70}}
\put(60,30){\line(1,0){20}}
\put(120,30){\line(1,0){40}}
\put(220,30){\line(1,0){20}}
\thinlines
\put(80,30){\circle*{2}}
\put(120,30){\circle*{2}}
\put(160,30){\circle*{2}}
\put(220,30){\circle*{2}}
\put(80,22){\makebox(0,0)[cc]{$-1$}}
\put(125,24){\makebox(0,0)[cc]{$0$}}
\put(156,22){\makebox(0,0)[cc]{$1$}}
\put(220,24){\makebox(0,0)[cc]{$\infty$}}
\put(140,55){\makebox(0,0)[cc]{$\ell'$}}
\put(140,50){\vector(-1,0){0}}
\put(140,30){\ellipse{60}{40}}
\end{picture}
\caption{Projection to the $z$-plane of the loop $\ell'\in\pi_1(M)$.}
\label{fg:loop1}
\end{center}
\end{figure} 

By the actions of the $\kappa _j$'s, we can obtain all of the $1$-cycles on $M$ 
from $\ell'$. 
If \eqref{eq:period} holds for this $\ell'$, then 
\[
 \Re\int_{\kappa_j\circ\ell'}\Phi 
=\Re\int_{\ell'}\kappa_j^*\Phi 
=K\Re\int_{\ell'}\Phi
={\bf 0}
\]
for some orthogonal matrix $K$, by Lemma~\ref{lm:sym1}. 
Hence all that remains to be done is to show that \eqref{eq:period1} holds 
for $\ell'$.

We now calculate path-integrals of $\eta $ and $g^2\eta $ along $\ell'$, 
and we want to reduce them to path-integrals along $\ell$ for simplicity. 
Note that both $\eta$ and $g^2\eta$ have poles at $(0,0)$.  
To avoid divergent integrals, 
here we add exact $1$-forms 
which have principal parts of $\eta$ and $g^2\eta$, respectively. 
It is straightforward to check
\[
 \dfrac{dz}{z^2w}-\dfrac{\gamma +1}{\gamma +2}d\left(\dfrac{z^2-1}{zw}\right)
 =\dfrac{\gamma}{\gamma +2}\dfrac{dz}{w}, \qquad
 \dfrac{w}{z^2}dz+\dfrac{\gamma +1}{\gamma}d\left(\dfrac{w}{z}\right)
 =\dfrac{2w}{z^2-1}dz. 
\]
So we have 
\begin{align*}
\oint_{\ell'}\eta 
&= \dfrac{i\gamma}{\gamma +2}\oint_{\ell'}\dfrac{dz}{w}
 = \dfrac{i\gamma}{\gamma +2}\oint_\ell\dfrac{dz}{w}
 = \dfrac{-2\gamma}{\gamma +2}\sin\dfrac{\gamma\pi}{\gamma +1}
   \int_0^1\dfrac{dt}{\sqrt[\gamma +1]{t(1-t^2)^\gamma}}, \\
\oint_{\ell'}g^2\eta 
&= 2ic^2\oint_{\ell'}\dfrac{w}{z^2-1}dz 
 = 2ic^2\oint_\ell\dfrac{w}{z^2-1}dz
 =-4c^2\sin\dfrac{\gamma\pi}{\gamma +1}
   \int_0^1\sqrt[\gamma +1]{\dfrac{t}{1-t^2}}dt. 
\end{align*}
By setting
\[
A_\gamma = \dfrac{\gamma}{\gamma +2}
           \int_0^1\dfrac{dt}{\sqrt[\gamma +1]{t(1-t^2)^\gamma}}
           \in\mathbb{R}_{>0},
\qquad
B_\gamma = 2\int_0^1\sqrt[\gamma +1]{\dfrac{t}{1-t^2}}dt
           \in\mathbb{R}_{>0},
\]
\eqref{eq:period1} is reduced to $A_\gamma = c^2 B_\gamma$.
Let us set 
\[
c=\sqrt{\frac{A_\gamma}{B_\gamma}}\in\mathbb{R}_{>0}. 
\]
This choice of $c$ satisfies \eqref{eq:period1} and is the unique positive 
real number that does so.  This completes the proof.  
(See Figure~\ref{fg:pe1}.) 
\end{proof}

\begin{figure}[htbp] 
\begin{center}
\begin{tabular}{cccc}
\includegraphics[width=.20\linewidth]{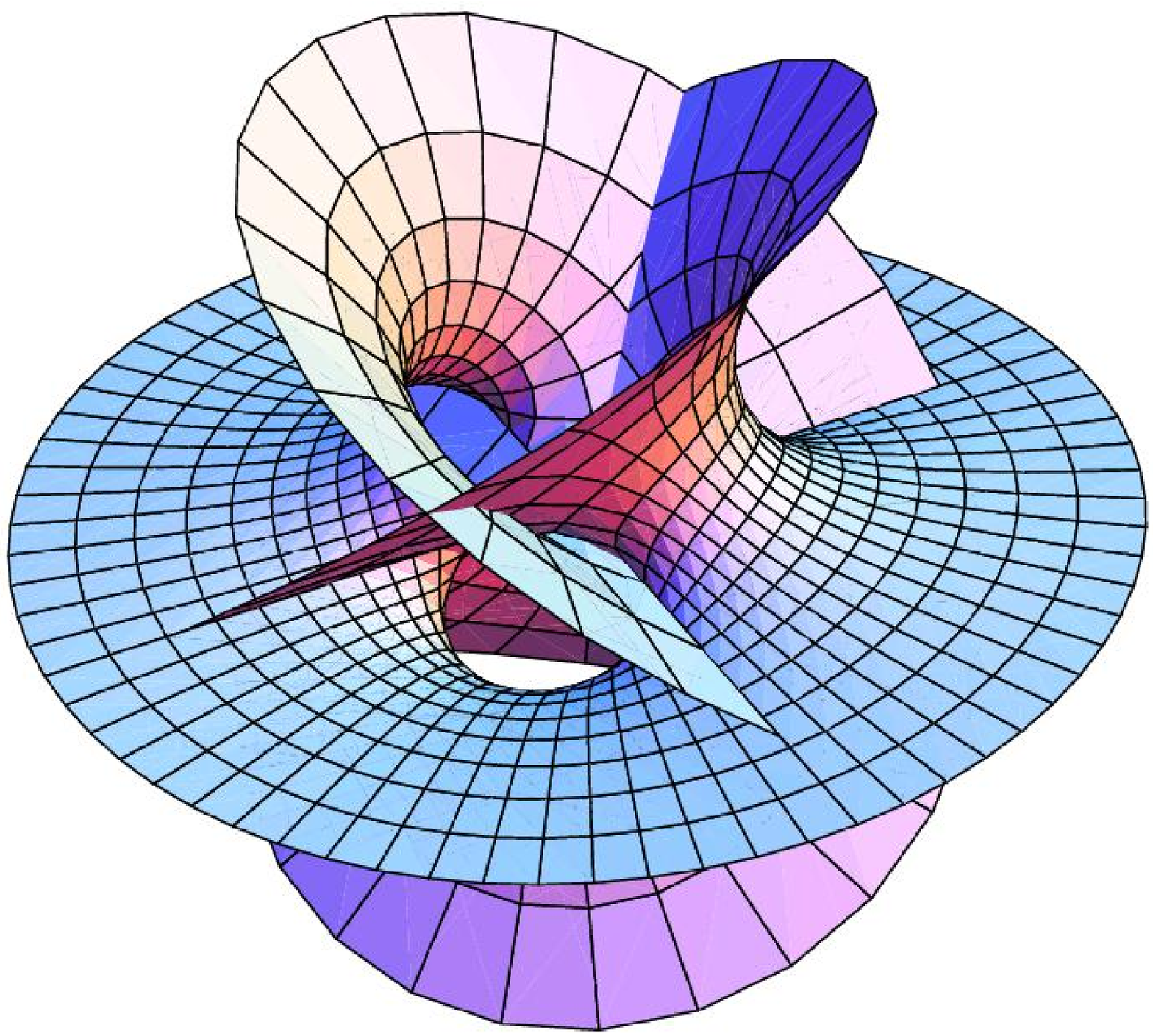} & 
\includegraphics[width=.20\linewidth]{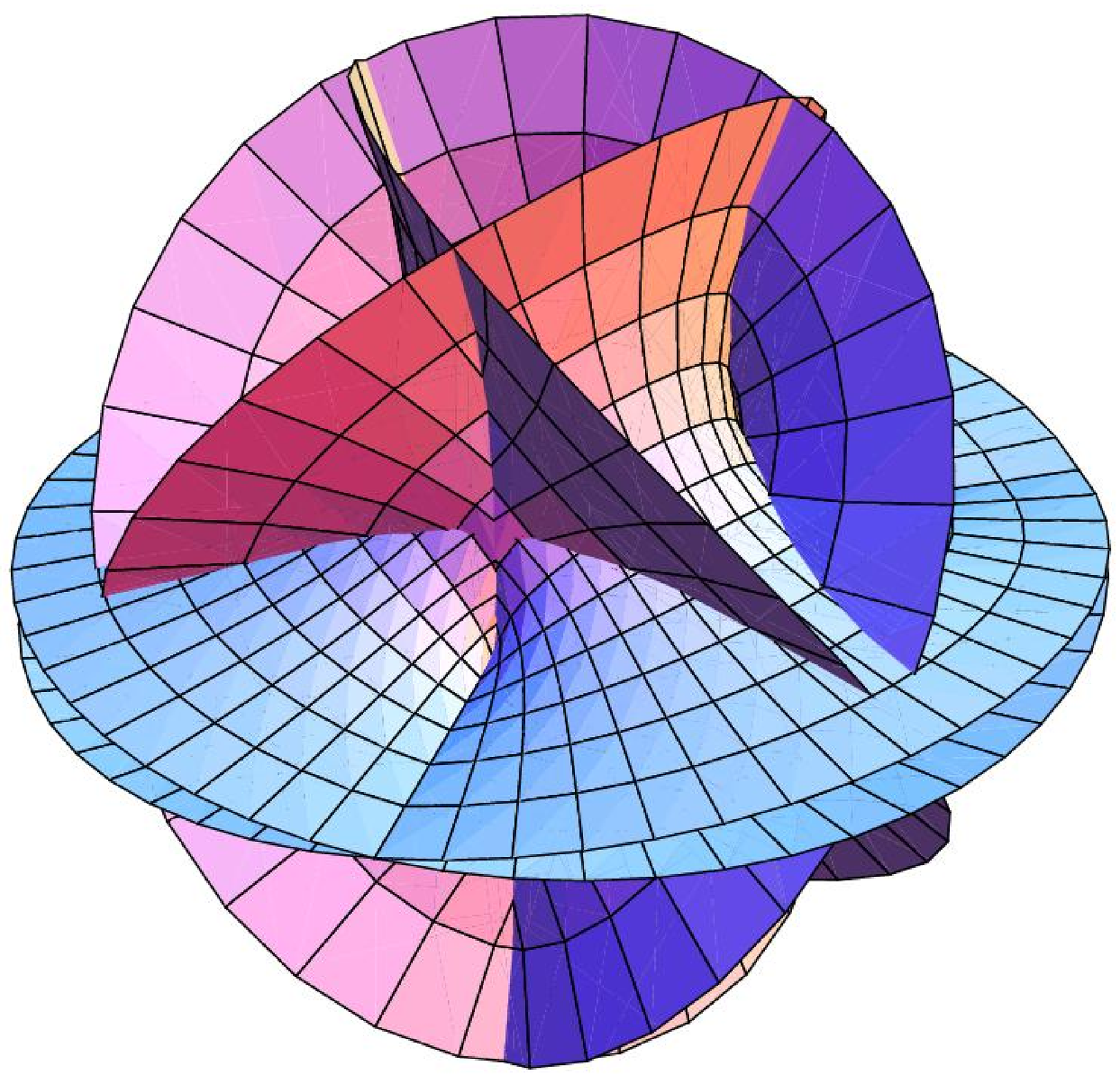} & 
\includegraphics[width=.20\linewidth]{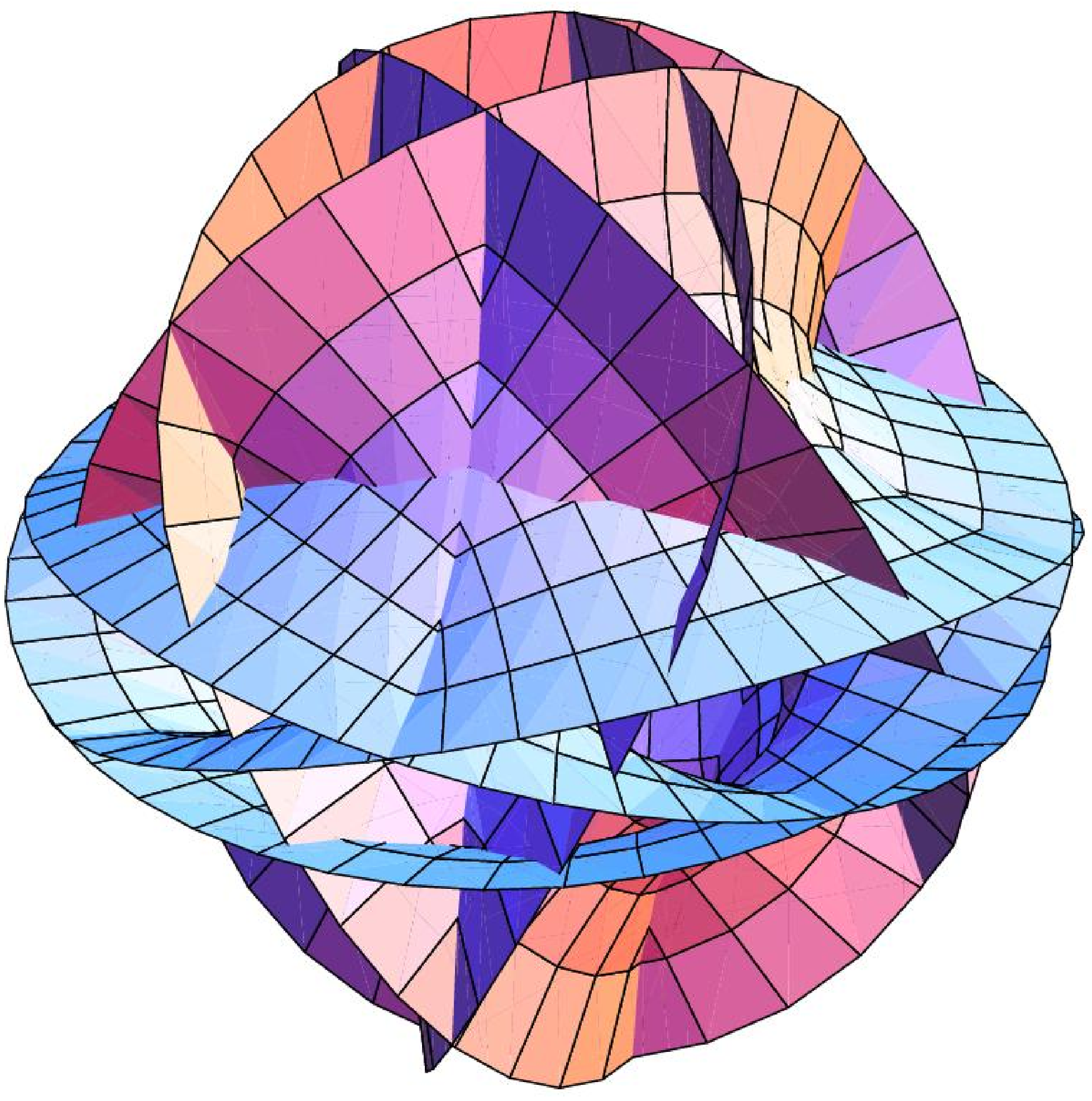} & 
\includegraphics[width=.20\linewidth]{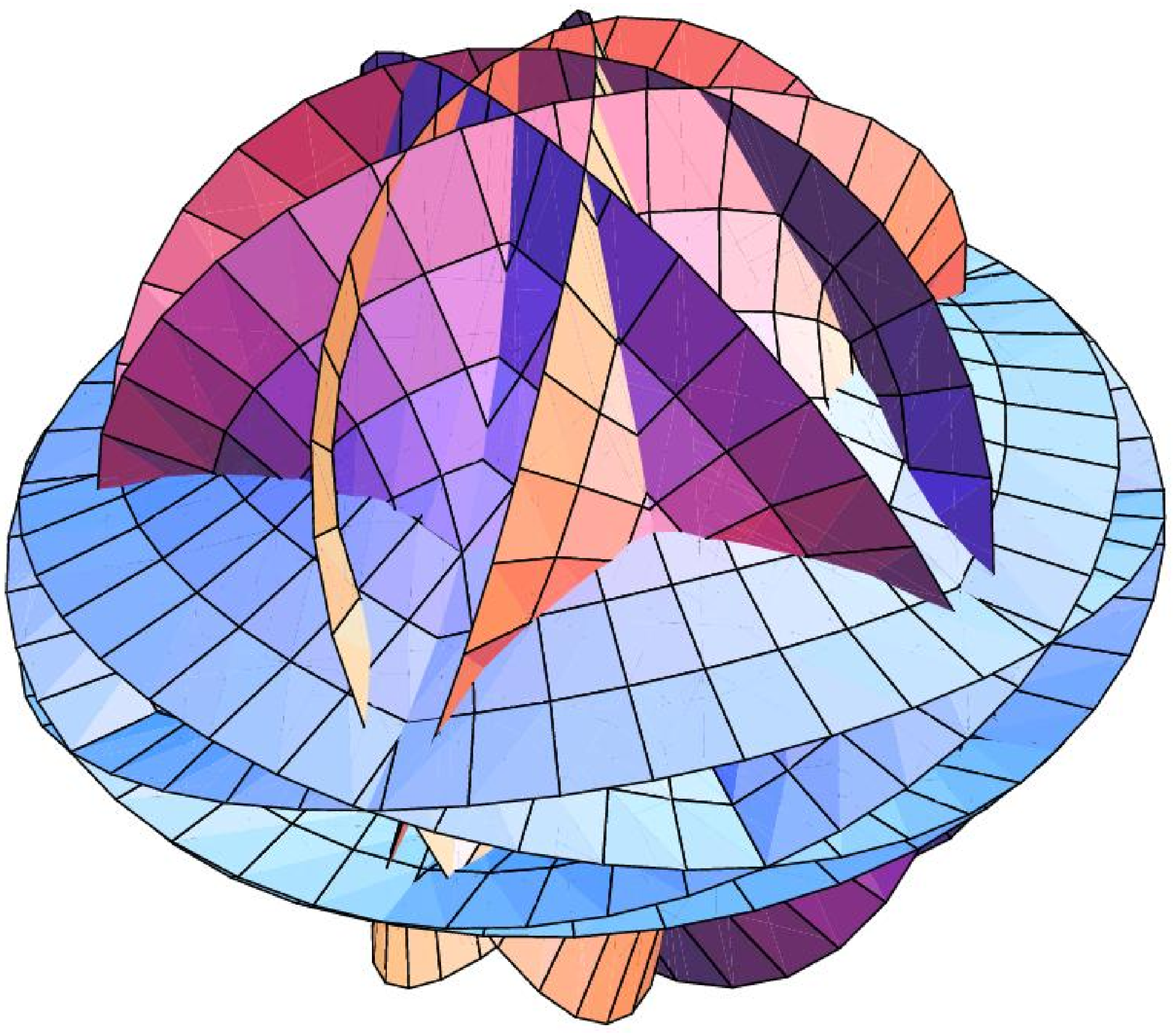} \\
$\gamma =1$ & $\gamma =2$ & $\gamma =3$ & $\gamma =4$
\end{tabular}
\caption{Minimal surfaces of genus $\gamma$ with two ends 
         which satisfy $\deg (g)=2\gamma +1$.}
\label{fg:pe1}
\end{center}
\end{figure} 

Since $\deg(g)=2\gamma +1$, 
$\deg(g)=\gamma +2$ if and only if $\gamma =1$. 
As a consequence, the next corollary follows:

\begin{corollary}\label{co:genus1}
There exists a complete conformal minimal surface of genus $1$ 
with two ends which has least total absolute curvature. 
\end{corollary}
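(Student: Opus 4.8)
The plan is to combine the existence result just established with the numerical constraints coming from Theorem~\ref{th:oss-ineq} and Table~\ref{tb:genus1}. First I would invoke Theorem~\ref{existence1-thm} with $\gamma=1$: this produces a concrete $M=\overline{M}_1\setminus\{(0,0),(\infty,\infty)\}$, which is a once-punctured (at two points) torus, together with the Weierstrass data $(g,\eta)=(cw,\,i\,dz/(z^2w))$ for the specific constant $c=\sqrt{A_1/B_1}$, for which the period condition \eqref{eq:period} holds. By Theorem~\ref{th:w-rep} the map $f=\Re\int\Phi$ is then a conformal minimal immersion. The geometric hypotheses are already accounted for in the surrounding text: the metric \eqref{eq:1stff} is complete (as recorded in Table~\ref{tb:genus1}), so $f$ is a complete conformal minimal surface, and it has exactly the two ends $(0,0)$ and $(\infty,\infty)$, giving $n=2$ and genus $\gamma=1$.

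Next I would verify that this surface realizes equality in \eqref{eq:ineq}, i.e.\ that it has \emph{least} total absolute curvature among non-catenoidal two-ended surfaces. The key is to compute $\deg(g)$. From Table~\ref{tb:genus1}, reading off the zeros of $g=cw$ (orders $1,\gamma,\gamma$ at $(0,0),(1,0),(-1,0)$) and its pole (order $2\gamma+1$ at $(\infty,\infty)$), the total degree is $\deg(g)=2\gamma+1$. Setting $\gamma=1$ gives $\deg(g)=3=\gamma+2$, which is precisely equality in \eqref{eq:ineq}; this is exactly the observation made just before the corollary. Since $\tau(M)=4\pi\deg(g)$, equality in \eqref{eq:ineq} means $\tau(M)$ attains its minimum $4\pi(\gamma+2)=12\pi$ on the non-catenoidal class, which is what ``least total absolute curvature'' means.

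There is essentially no hard analytic step remaining: the period condition and completeness were the substantive content, and both are supplied by Theorem~\ref{existence1-thm} and the completeness assertion preceding it. The only point requiring a little care is consistency of the degree count with the Jorge--Meeks data $(d_1,d_2)=(1,3)$ promised in the introduction. I would cross-check this via \eqref{eq:di}: one should confirm that the orders of the poles of $\Phi$ at the two ends yield $d_1+d_2=4$, so that \eqref{eq:oss-ineq1} gives $\deg(g)=\gamma+n-1+(\text{defect})$, consistent with $\deg(g)=3$ and embeddedness of one end. I expect this bookkeeping to be the only place where a sign or order could slip, but it is routine given Table~\ref{tb:genus1}. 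Assembling these pieces—existence (Theorem~\ref{existence1-thm}), completeness, two ends, and $\deg(g)=\gamma+2$ at $\gamma=1$—immediately yields a surface satisfying all the requirements, proving the corollary.
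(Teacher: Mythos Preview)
Your proposal is correct and follows essentially the same approach as the paper: invoke Theorem~\ref{existence1-thm} for $\gamma=1$ to obtain the complete two-ended genus-$1$ surface, then use the count $\deg(g)=2\gamma+1$ (from Table~\ref{tb:genus1}) together with $2\gamma+1=\gamma+2\iff\gamma=1$ to conclude equality in \eqref{eq:ineq}. The extra cross-check of $(d_1,d_2)$ you propose is not needed for the corollary itself, but it is consistent with what the paper records and causes no harm.
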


\subsection{The case $\gamma$ is even} 
\label{sec:evengenus} 

The following construction is similar to the construction in 
Section~\ref{sec:genus1}.  
Crucial arguments are given after \eqref{eq:period-even}.  

For an integer $k\geq 2$, 
let $\overline{M}_\gamma$ be the Riemann surface 
\[
\overline{M}_\gamma
 =\left\{(z,w)\in (\mathbb{C}\cup\{\infty\})^2\,\left|\,
         w^{k+1}=z^2\left(\dfrac{z-1}{z-a}\right)^k\right.\right\},
\]
where $a\in (1,\infty)$ is a constant to be determined.  
By the Riemann-Hurwitz formula, we see that 
\[
\gamma =\left\{\begin{array}{ll}
               k   & (\text{if $k$ is even}), \\
               k-1 & (\text{if $k$ is odd}).  \end{array}\right. 
\]
Note that the genus $\gamma$ is always even. 
(See Figure~\ref{fg:riem-surf}.)  

\begin{figure}[htbp] 
\begin{center}
\includegraphics[width=1.0\linewidth]{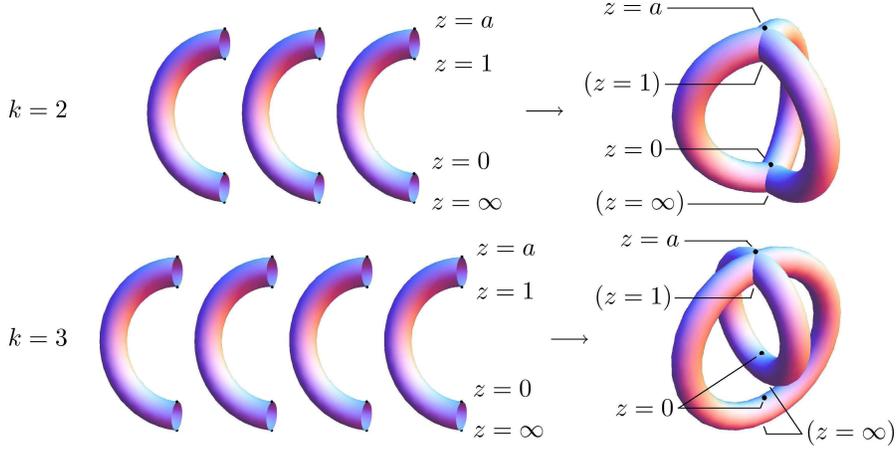}
\caption{Riemann surfaces $\overline{M}_\gamma$,  
         $k=2$ (top) and $k=3$ (bottom). 
         Both surfaces have genus 2. 
         When $k$ is odd, $\overline{M}_\gamma$ has self-intersections 
         near $z=0$ and $z=\infty$.  
         In the sketch in the bottom row, we see two different $z=0$ 
         (and two different $z=\infty$, which are hidden 
         from this viewpoint) but they are in fact the same points. 
         The reason we place these points differently is to reveal 
         their genus clearly.}
\label{fg:riem-surf}
\end{center}
\end{figure} 

We set 
\[
M=\overline{M}_\gamma\setminus\left\{(0,0), (\infty, \infty)\right\}, \quad
g=cw \;\; \left(c=a^{(k-2)/(2k+2)}\in\mathbb{R}_{>0}\right), \quad 
\eta = \frac{dz}{zw}.  
\]
Then \eqref{eq:1stff} gives a complete Riemannian metric $M$.  
(See Table~\ref{tb:genus-e}.) 

\begin{table}[htbp] 
\begin{center}
\begin{tabular}{|c||c|c|c|c|}\hline 
$(z,w)$ & $(0,0)$ & $(1,0)$ & $(a,\infty)$ & $(\infty,\infty)$ \\ \hline\hline
$g$     & $0^2$      & $0^k$ & $\infty^k$ & $\infty^2$ \\ \hline 
$\eta$  & $\infty^4$ &       & $0^{2k}$   &       \\ \hline 
\end{tabular}
\\ \ \\ \ \\
\begin{tabular}{|c||c|c|c|c|}\hline 
$(z,w)$ & $(0,0)$ & $(1,0)$ & $(a,\infty)$ & $(\infty,\infty)$ \\ \hline\hline
$g$    & $0^2$      & $0^k$ & $\infty^k$ & $\infty^2$ \\ \hline 
$\eta$ & $\infty^3$ &       & $0^{2k}$   & $0^1$      \\ \hline 
\end{tabular}
\caption{Orders of zeros and poles of $g$ and $\eta$ when $k$ is odd (top) 
         and $k$ is even (bottom).}
\label{tb:genus-e}
\end{center}
\end{table} 

Also, $\deg (g)=k+2$ for all $k\ge 2$.  
Thus equality in \eqref{eq:ineq} holds if and only if $k$ is even.  
Hereafter we assume $k$ is even. 

Let $\Phi$ be the $\mathbb{C}^3$-valued differential as in \eqref{eq:Phi}. 
We shall prove that \eqref{eq:surf} is a conformal minimal immersion of $M$. 

First, we observe the following symmetries $\kappa_1$, $\kappa_2$, 
and $\kappa_3$ of the surface.  

\begin{lemma}[Symmetries of the surface]\label{lm:sym2}
Consider the following conformal mappings of $M${\rm :}
\begin{eqnarray*}
&
\kappa_1(z,w)=\left(\bar z,\bar w\right), \qquad 
\kappa_2(z,w)=\left(z,e^{2\pi i/(k+1)} w\right), 
& \\ & 
\kappa_3(z,w)=\left(\dfrac{a}{z},
 \dfrac{1}{c^2w}\right). 
&
\end{eqnarray*}
Then, 
\begin{eqnarray*}
&
\kappa_1^*\Phi
=\begin{pmatrix} 1 & 0 & 0 \\
                 0 &-1 & 0 \\
                 0 & 0 & 1 \end{pmatrix}\overline{\Phi}, \qquad
\kappa_2^*\Phi
=\begin{pmatrix} \cos\frac{2\pi}{k+1} &-\sin\frac{2\pi}{k+1} & 0 \\
                 \sin\frac{2\pi}{k+1} &\phantom{-}\cos\frac{2\pi}{k+1} & 0 \\
                 0                    & 0                    & 1 
 \end{pmatrix}\Phi , 
& \\ &
\kappa_3^*\Phi= \begin{pmatrix} 1 & 0 & 0 \\
                                0 &-1 & 0 \\
                                0 & 0 &-1 \end{pmatrix}\Phi .
&
\end{eqnarray*}
\end{lemma}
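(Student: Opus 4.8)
The plan is to prove Lemma~\ref{lm:sym2} in two stages. First I would verify that each $\kappa_j$ is a well-defined conformal self-map of $\overline{M}_\gamma$ preserving the puncture set $\{(0,0),(\infty,\infty)\}$, so that it descends to a conformal automorphism of $M$; then I would compute the pullbacks $\kappa_j^*g$ and $\kappa_j^*\eta$ and substitute them into the three entries of $\Phi=\bigl((1-g^2)\eta,\ i(1+g^2)\eta,\ 2g\eta\bigr)$.

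For well-definedness I would check directly that each map carries a point of $\overline{M}_\gamma$ to another, i.e. that it preserves the defining relation $w^{k+1}=z^2\bigl((z-1)/(z-a)\bigr)^k$. For $\kappa_1$ this is immediate upon conjugating the relation, using $a\in\mathbb{R}$; note $\kappa_1$ is anti-holomorphic. For $\kappa_2$ it follows from $\bigl(e^{2\pi i/(k+1)}\bigr)^{k+1}=1$, so $\kappa_2$ generates the deck group of the cyclic cover $z$ and is holomorphic. The substantive case is $\kappa_3$: substituting $z\mapsto a/z$ and $w\mapsto 1/(c^2w)$ and simplifying the factor $\bigl((a/z-1)/(a/z-a)\bigr)^k=\bigl((z-a)/(a(z-1))\bigr)^k$, one finds the relation is preserved precisely when $c^{2(k+1)}=a^{k-2}$, which is exactly the stated choice $c=a^{(k-2)/(2k+2)}$. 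Thus $\kappa_3$ is the geometric reason behind the definition of $c$. One also checks that $\kappa_1$ and $\kappa_2$ fix each puncture while $\kappa_3$ interchanges $(0,0)$ and $(\infty,\infty)$, so all three preserve $M$.

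For the action on $\Phi$ I would record the pullbacks of the Weierstrass data: writing $\zeta:=e^{2\pi i/(k+1)}$, short computations give $\kappa_1^*g=\bar g$ and $\kappa_1^*\eta=\bar\eta$; $\kappa_2^*g=\zeta g$ and $\kappa_2^*\eta=\zeta^{-1}\eta$; and $\kappa_3^*g=1/g$ and $\kappa_3^*\eta=-g^2\eta$, the last obtained by differentiating $a/z$ and clearing denominators with the help of $g^2\eta=c^2(w/z)\,dz$. Substituting into $\Phi$ then gives the three matrices. For $\kappa_1$ the conjugation returns $\overline{\Phi}$, with the middle sign coming from $i(1+\bar g^2)\bar\eta=-\overline{i(1+g^2)\eta}$. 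For $\kappa_3$ one gets directly $\kappa_3^*\Phi_1=\Phi_1$, $\kappa_3^*\Phi_2=-\Phi_2$, $\kappa_3^*\Phi_3=-\Phi_3$. For $\kappa_2$ it is cleanest to solve $\eta=(\Phi_1-i\Phi_2)/2$ and $g^2\eta=-(\Phi_1+i\Phi_2)/2$, write $\kappa_2^*\Phi_1=\zeta^{-1}\eta-\zeta g^2\eta$ and $\kappa_2^*\Phi_2=i\zeta^{-1}\eta+i\zeta g^2\eta$, and collect terms using $\zeta+\zeta^{-1}=2\cos\frac{2\pi}{k+1}$ and $\zeta-\zeta^{-1}=2i\sin\frac{2\pi}{k+1}$; this reproduces exactly the rotation block.

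I expect the main obstacle to be the $\kappa_3$ computation, on two counts: verifying its well-definedness is what pins down the precise value of $c$, and the pullback $\kappa_3^*\eta=-g^2\eta$ must be derived carefully from the defining equation rather than guessed. Once these are in hand, the substitutions for all three maps are routine, and the trigonometric collection for $\kappa_2$ is the only other place requiring care.
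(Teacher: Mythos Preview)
Your proposal is correct and proceeds exactly as the paper intends: the paper states Lemma~\ref{lm:sym2} without proof, relying (as with Lemma~\ref{lm:sym1}) on ``straightforward calculation,'' and your verification of well-definedness and entrywise computation of $\kappa_j^*\Phi$ via the pullbacks of $g$ and $\eta$ is precisely that calculation carried out in full. Your remark that the well-definedness of $\kappa_3$ forces $c^{2(k+1)}=a^{k-2}$, hence pins down $c=a^{(k-2)/(2k+2)}$, is a nice structural observation that the paper does not make explicit.
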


As we have already seen the completeness of $f$, 
it suffices to show that $f$ is well-defined on $M$ 
for the right choice of $a\in (1,\infty)$.  

\begin{theorem} 
For any positive even number $k$, 
there exists a unique constant $a\in (1,\infty)$ for which the 
immersion $f$ given in \eqref{eq:surf} is well-defined on $M$.   
\end{theorem}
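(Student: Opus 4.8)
The plan is to mimic the structure of the proof of Theorem~\ref{existence1-thm}, reducing the period condition \eqref{eq:period} to a single real equation in the parameter $a$ and then establishing existence and uniqueness of a solution $a\in(1,\infty)$ by an analytic argument. First I would dispose of \eqref{eq:period2}: since $g\eta = cw\cdot\frac{dz}{zw} = c\,\frac{dz}{z}$, the form $g\eta$ is a constant multiple of $dz/z$, whose only period comes from loops around $z=0$ or $z=\infty$. Because $\kappa_2^*\Phi$ rotates the first two coordinates while fixing $\Phi_3$, and $g\eta$ feeds into $\Phi_3 = 2g\eta$, I would check directly that $\Re\oint_\ell g\eta = 0$ on the relevant cycles (the residue of $c\,dz/z$ is real, so its real period vanishes on the symmetric cycle). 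This handles \eqref{eq:period2}.

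Next I would treat \eqref{eq:period1}. As in the genus~$1$ case, I would first verify that $\eta$ and $g^2\eta$ have no residues at the ends $(0,0)$ and $(\infty,\infty)$: using $w$ as a local coordinate near $(0,0)$ and the defining equation $w^{k+1}=z^2\bigl(\frac{z-1}{z-a}\bigr)^k$ to expand $z=z(w)$, I would read off the Laurent expansions of $\eta$ and $g^2\eta$ and confirm the residue terms vanish, then invoke the residue theorem for the other end. The symmetries $\kappa_1,\kappa_2,\kappa_3$ in Lemma~\ref{lm:sym2} generate all of $\pi_1(M)$ from a single convenient cycle $\ell'$ projecting to a loop around $[1,a]$ (or around $0$), exactly as the $\kappa_j$'s did before; since each $\kappa_j^*\Phi = K\Phi$ or $K\overline\Phi$ for an orthogonal $K$, verifying \eqref{eq:period} on one generating cycle suffices. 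I would then reduce $\oint_{\ell'}\eta$ and $\oint_{\ell'}g^2\eta$ to ordinary real integrals over a real interval by adding exact $1$-forms that cancel the principal parts at the puncture, producing two positive quantities, say $\widetilde A(a)$ and $\widetilde B(a)$, so that \eqref{eq:period1} becomes a single equation of the form $\widetilde A(a) = c^2\,\widetilde B(a)$ with $c=a^{(k-2)/(2k+2)}$ prescribed.

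The main obstacle, and the point where this proof genuinely departs from Theorem~\ref{existence1-thm}, is that here $c$ is \emph{not} a free parameter to be solved for: it is pinned to $a$ by $c=a^{(k-2)/(2k+2)}$, so the period equation is a genuine transcendental constraint on $a$ alone. I would therefore define
\[
F(a) := \widetilde A(a) - a^{(k-2)/(k+1)}\,\widetilde B(a), \qquad a\in(1,\infty),
\]
and prove $F$ has a unique zero. The existence half I would obtain by the intermediate value theorem, analyzing the boundary behavior of the two integrals as $a\to 1^+$ and as $a\to\infty$ (where one integral blows up or vanishes against the other, forcing a sign change of $F$); the degeneration of the branch points, e.g.\ $z=1$ merging with $z=a$ as $a\to 1$, should make one side dominate. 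The uniqueness half is the delicate part: I expect to show $F$ is strictly monotone, by differentiating under the integral sign and checking the sign of $F'(a)$, or else by a scaling/convexity argument showing the ratio $\widetilde A(a)/\widetilde B(a)$ crosses the monotone power $a^{(k-2)/(k+1)}$ exactly once. If direct monotonicity of $F$ resists, I would fall back on rewriting the quotient $\widetilde A/\widetilde B$ as a monotone function of $a$ via a substitution that normalizes the interval of integration, which is the technically heaviest step and the one most likely to require care about the even-$k$ hypothesis that keeps the roots $\sqrt[k+1]{\,\cdot\,}$ single-valued along the chosen cycle.
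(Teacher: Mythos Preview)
Your overall skeleton matches the paper's, but two concrete points are off.

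First, a single generating cycle does not suffice here; the paper needs two, $\ell_1'$ (projecting to a loop around $[0,1]$) and $\ell_2$ (around $[1,a]$), and treats them separately. The period on $\ell_2$ is shown to vanish for \emph{every} $a$ by the substitution $\tau=a/t$ in the integral for $g^2\eta$; this substitution absorbs exactly the factor $c^2=a^{(k-2)/(k+1)}$, so your observation that ``$c$ is pinned to $a$'' is really the statement that the $\ell_2$ period is already closed once $c$ is so chosen. You should either exhibit an explicit homology relation putting $\ell_2$ in the $\kappa_j$-orbit of $\ell_1'$ (you have not), or handle $\ell_2$ directly as the paper does.

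Second, and this is the substantive gap, the period equation on $\ell_1'$ is not of the two-term shape $\widetilde A(a)=c^2\widetilde B(a)$ you anticipate. After subtracting the exact form $\tfrac{k+1}{2}\,d\!\left(\tfrac{z-1}{w}\right)$ to kill the pole of $\eta$ at $(0,0)$, the integral $\oint_{\ell_1'}\eta$ itself splits into a \emph{difference} of two positive pieces, and \eqref{eq:period1} becomes the three-term equation
\[
kA_1(a)\;+\;2a^{(k-2)/(k+1)}A_3(a)\;-\;A_2(a)\;=\;0,
\]
with $A_1,A_2,A_3$ explicit positive integrals over $[0,1]$. This structure drives both halves of the argument. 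For existence, the limit $a\to\infty$ is easy (each of $A_1$ and $a^{(k-2)/(k+1)}A_3$ tends to $0$ while $A_2\to\infty$), but the limit $a\to 1$ is not the simple blow-up you describe: one sandwiches each $A_j$ between beta-function values using $1/a\le 1/(a-t)\le 1/(a-1)$ and then invokes the identity $B(x,y+1)=\tfrac{y}{x+y}B(x,y)$ to see the left-hand side stays strictly positive. For uniqueness, your differentiation/ratio plan is both harder and unnecessary: each of $A_1$, $a^{(k-2)/(k+1)}A_3$, and $-A_2$ is monotone decreasing in $a$ by inspection of the integrands (for the middle term, rewrite $a^{(k-2)/(k+1)}(a-t)^{-k/(k+1)}=\bigl(a^2(1-t/a)^k\bigr)^{-1/(k+1)}$), so the sum is strictly decreasing and the zero is unique.
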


\begin{proof}
We will show \eqref{eq:period} in Theorem~\ref{th:w-rep}.  
It is easy to verify that there are no residues at the ends 
$(0,0)$, $(\infty,\infty)$. 
So all that remains is to choose $c$ so that \eqref{eq:period} is satisfied.  
\eqref{eq:period2} follows from the exactness of 
$g\eta=(c/z)dz=c\cdot d(\log z)$ and $c\in\mathbb{R}$, 
and hence we will only have to show \eqref{eq:period1}. 
To do this, we will give convenient $1$-cycles. 

Define a $1$-cycle on $\overline{M}_\gamma$ as 
\begin{align*}
\ell_1 &= \left\{\left. (z,w)=\left(-t,\,\sqrt[k+1]{t^2
                              \left(\frac{1+t}{a+t}\right)^k}\right)
          \,\right|\,-1\le t\le 0\right\} \\
       &  \qquad\qquad \cup 
          \left\{\left. (z,w)=\left(t,\,\sqrt[k+1]{t^2
                              \left(\frac{1-t}{a-t}\right)^k}
                              \,e^{2\pi i/(k+1)}\right)
          \,\right|\,0\le t\le 1\right\}.  
\end{align*}
Recall that $(0,0)$ corresponds to the end of $f$.  
Avoiding the end $(0,0)$, we can deform $\ell_1$ to a $1$-cycle 
$\ell_1'$ on $M$ which is projected to a loop winding once around 
$[0,1]$ in the $z$-plane.  
We also define another $1$-cycle on $M$ as 
\begin{align*}
\ell_2 &= \left\{\left. (z,w)=\left(-t,\,\sqrt[k+1]{t^2
                              \left(\frac{-t-1}{a+t}\right)^k}
                              \,e^{k\pi i/(k+1)}\right)
          \,\right|\,-a\le t\le -1\right\} \\
       &  \qquad\qquad \cup 
          \left\{\left. (z,w)=\left(t,\,\sqrt[k+1]{t^2
                              \left(\frac{t-1}{a-t}\right)^k}
                              \,e^{-k\pi i/(k+1)}\right)
          \,\right|\,1\le t\le a\right\}. 
\end{align*}
(See Figure~\ref{fg:loop2}.)  

\begin{figure}[htbp] 
\begin{center}
\unitlength=1.15pt
\begin{picture}(300,70)
\put(30,35){\line(1,0){240}}
\thicklines
\put(70,35){\line(1,0){40}}
\put(190,35){\line(1,0){40}}
\thinlines
\put(70,35){\circle*{2}}
\put(110,35){\circle*{2}}
\put(190,35){\circle*{2}}
\put(230,35){\circle*{2}}
\put(70,27){\makebox(0,0)[cc]{$0$}}
\put(114,29){\makebox(0,0)[cc]{$1$}}
\put(190,29){\makebox(0,0)[cc]{$a$}}
\put(230,29){\makebox(0,0)[cc]{$\infty$}}
\put(100,65){\makebox(0,0)[cc]{$\ell_1'$}}
\put(90,55){\vector(-1,0){0}}
\put(90,15){\vector(1,0){0}}
\bezier200(120,35)(120,55)(90,55)
\bezier200(90,55)(60,55)(60,35)
\bezier30(60,35)(60,15)(90,15)
\bezier30(90,15)(120,15)(120,35)
\put(160,60){\makebox(0,0)[cc]{$\ell_2$}}
\put(150,55){\vector(-1,0){0}}
\put(150,15){\vector(1,0){0}}
\put(150,35){\ellipse{100}{40}}
\end{picture} 
\caption{Projections to the $z$-plane of the loops $\ell_1'$ and 
         $\ell_2\in\pi_1(M)$.}
\label{fg:loop2}
\end{center}
\end{figure}
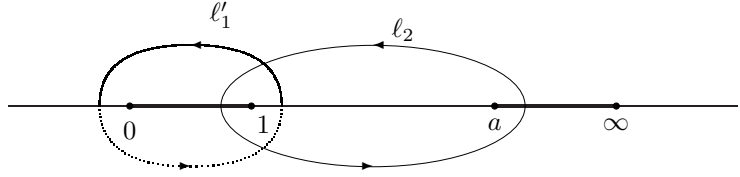 

Again by the actions of the $\kappa _j$'s, we can obtain 
all of the $1$-cycles on $M$ from $\ell_1'$ and $\ell_2$. 
We now show that \eqref{eq:period1} holds for $\ell_1'$ and $\ell_2$.

First we calculate the path-integrals of $\eta$ and $g^2\eta$ along $\ell_2$.  
Then we have
\begin{align}
\oint_{\ell_2}\eta
&= 2i\sin\dfrac{k\pi}{k+1}
   \int_1^a\sqrt[k+1]{\dfrac{(a-t)^k}{t^{k+3}(t-1)^k}}dt, 
   \label{eq:ell2-eta} \\
\oint_{\ell_2}g^2\eta
&=-2ic^2\sin\dfrac{k\pi}{k+1}
   \int_1^a\sqrt[k+1]{\dfrac{(t-1)^k}{t^{k-1}(a-t)^k}}dt \nonumber \\
&=-2i\sin\dfrac{k\pi}{k+1}
   \int_1^a\sqrt[k+1]{\dfrac{(a-\tau)^k}{\tau^{k+3}(\tau -1)^k}}d\tau , 
   \label{eq:ell2-ggeta} 
\end{align}
where $\tau =a/t$. 
As a result, \eqref{eq:period1} holds for $\ell_2$.  

Next we calculate the path-integrals of $\eta $ and $g^2\eta $ along $\ell_1'$, 
and we want to reduce them to the path-integrals along $\ell_1$. 
Note that $\eta$ has a pole at $(0,0)$.  
To avoid a divergent integral, 
here we add an exact $1$-form 
which has the principal part of $\eta$. 
It is straightforward to check
\[
\eta-\frac{k+1}{2}d\left(\frac{z-1}{w}\right)
     =-\frac{k}{2}\frac{z-1}{w(z-a)}dz+\frac{1}{2}\frac{dz}{w}.
\]
Thus we have 
\begin{align*}
\oint_{\ell_1'}\eta
&= \oint_{\ell_1'}\left(
  -\frac{k}{2}\frac{z-1}{w(z-a)}dz+\frac{1}{2}\frac{dz}{w}\right) \\
&= \oint_{\ell_1}\left(
  -\frac{k}{2}\frac{z-1}{w(z-a)}dz+\frac{1}{2}\frac{dz}{w}\right) 
 = ie^{-\pi i/(k+1)}\sin\dfrac{\pi}{k+1}(kA_1-A_2),
\end{align*}
where
\[
A_1=\int_0^1\frac{(1-t)^{1/(k+1)}}{t^{2/(k+1)}(a-t)^{1/(k+1)}}dt,
\qquad
A_2=\int_0^1\frac{(a-t)^{k/(k+1)}}{t^{2/(k+1)}(1-t)^{k/(k+1)}}dt.
\]
Also, we have
\[
  \oint_{\ell_1'}g^2\eta
= \oint_{\ell_1}g^2\eta
= 2ie^{\pi i/(k+1)}\sin\dfrac{\pi}{k+1}a^{(k-2)/(k+1)}A_3,
\]
where
\[
A_3=\int_0^1\frac{(1-t)^{k/(k+1)}}{t^{(k-1)/(k+1)}(a-t)^{k/(k+1)}}dt.
\]
Hence for the loop $\ell_1'\in\pi_1(M)$, 
\eqref{eq:period1} holds if and only if 
\begin{equation}\label{eq:period-even}
kA_1+2a^{(k-2)/(k+1)}A_3-A_2=0. 
\end{equation}

Now we evaluate the values $A_1$, $A_2$, and $A_3$.  
Since $1/a\le 1/(a-t)\le 1/(a-1)$, we see that 
\begin{align*}
\frac{1}{a^{1/(k+1)}}B\left(\frac{k-1}{k+1},\frac{k+2}{k+1}\right)
\le 
A_1 &\le
\frac{1}{(a-1)^{1/(k+1)}}B\left(\frac{k-1}{k+1},\frac{k+2}{k+1}\right), \\
\frac{1}{a^{k/(k+1)}}B\left(\frac{2}{k+1},\frac{2k+1}{k+1}\right)
\le 
A_3 &\le
\frac{1}{(a-1)^{k/(k+1)}}B\left(\frac{2}{k+1},\frac{2k+1}{k+1}\right),
\end{align*}
where $B(x,y)$ is the beta function defined by 
\[
B(x,y)=\int_0^1t^{x-1}(1-t)^{y-1}dt
\qquad (\Re x>0,\; \Re y>0).  
\] 
Also, since $a-1\le a-t\le a$, we have
\[
(a-1)^{k/(k+1)}B\left(\frac{k-1}{k+1},\frac{1}{k+1}\right)
\le 
A_2 \le
a^{k/(k+1)}B\left(\frac{k-1}{k+1},\frac{1}{k+1}\right).
\]

It follows that for the case $a\to\infty$, we have 
$A_1\to 0$, $a^{(k-2)/(k+1)}A_3\to 0$, and $A_2\to\infty$.  
As a result, the left hand side of \eqref{eq:period-even} is negative. 
On the other hand, for the case $a\to 1$, we have 
\begin{align*}
kA_1 &+ 2a^{(k-2)/(k+1)}A_3-A_2 \\
&\ge \frac{k}{a^{1/(k+1)}}B\left(\frac{k-1}{k+1},\frac{k+2}{k+1}\right)
    +\frac{2}{a^{2/(k+1)}}B\left(\frac{2}{k+1},\frac{2k+1}{k+1}\right) \\
& \hspace{200pt}
    -a^{k/(k+1)}B\left(\frac{k-1}{k+1},\frac{1}{k+1}\right) \\
& =  \frac{1}{a^{1/(k+1)}}B\left(\frac{k-1}{k+1},\frac{1}{k+1}\right)
    +\frac{2}{a^{2/(k+1)}}B\left(\frac{2}{k+1},\frac{2k+1}{k+1}\right) \\
& \hspace{200pt}
    -a^{k/(k+1)}B\left(\frac{k-1}{k+1},\frac{1}{k+1}\right) \\
&\underset{a\to 1}{\longrightarrow}
     2B\left(\frac{2}{k+1},\frac{2k+1}{k+1}\right) > 0,
\end{align*}
and here we use the following formula for the beta function: 
\[
B\left(x,y+1\right)=\frac{y}{x+y}B\left(x,y\right).
\]
So, the left hand side of \eqref{eq:period-even} is positive. 

Therefore, the intermediate value theorem yields that there exists 
$a\in (1,\infty)$ which satisfies \eqref{eq:period-even}.  
Moreover, since all of $A_1$, $a^{(k-2)/(k+1)}A_3$, and $-A_2$ are 
monotone decreasing functions with respect to $a$, 
 the left hand side of \eqref{eq:period-even} is monotone decreasing 
function as well.  
This proves the uniqueness.  
(See Figure~\ref{fg:cc24}). 
\end{proof}

\begin{figure}[htbp] 
\begin{center}
\begin{tabular}{cccc}
 \raisebox{50pt}{$k=2$} & 
 \includegraphics[width=.33\linewidth]{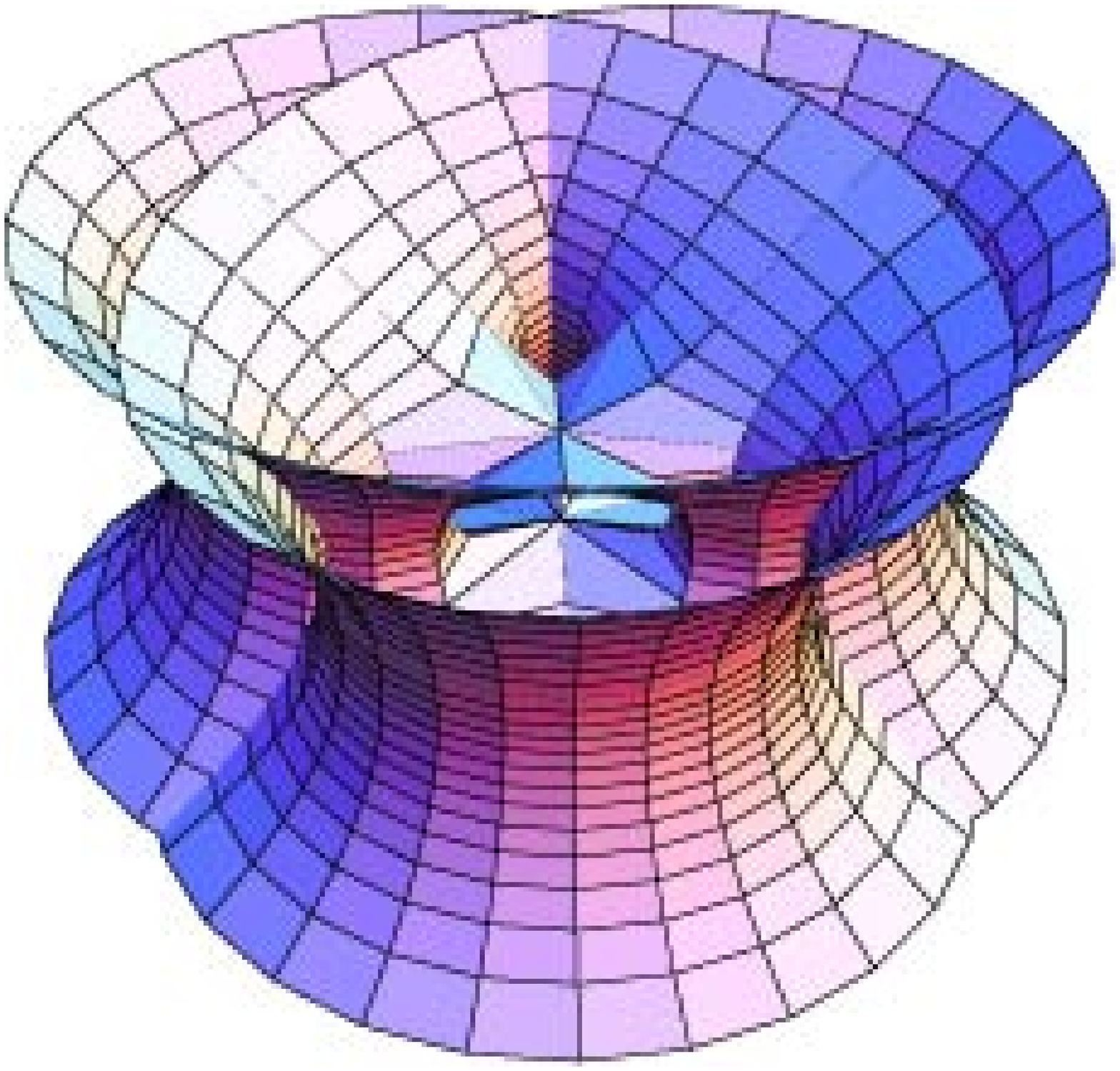} &
 \includegraphics[width=.26\linewidth]{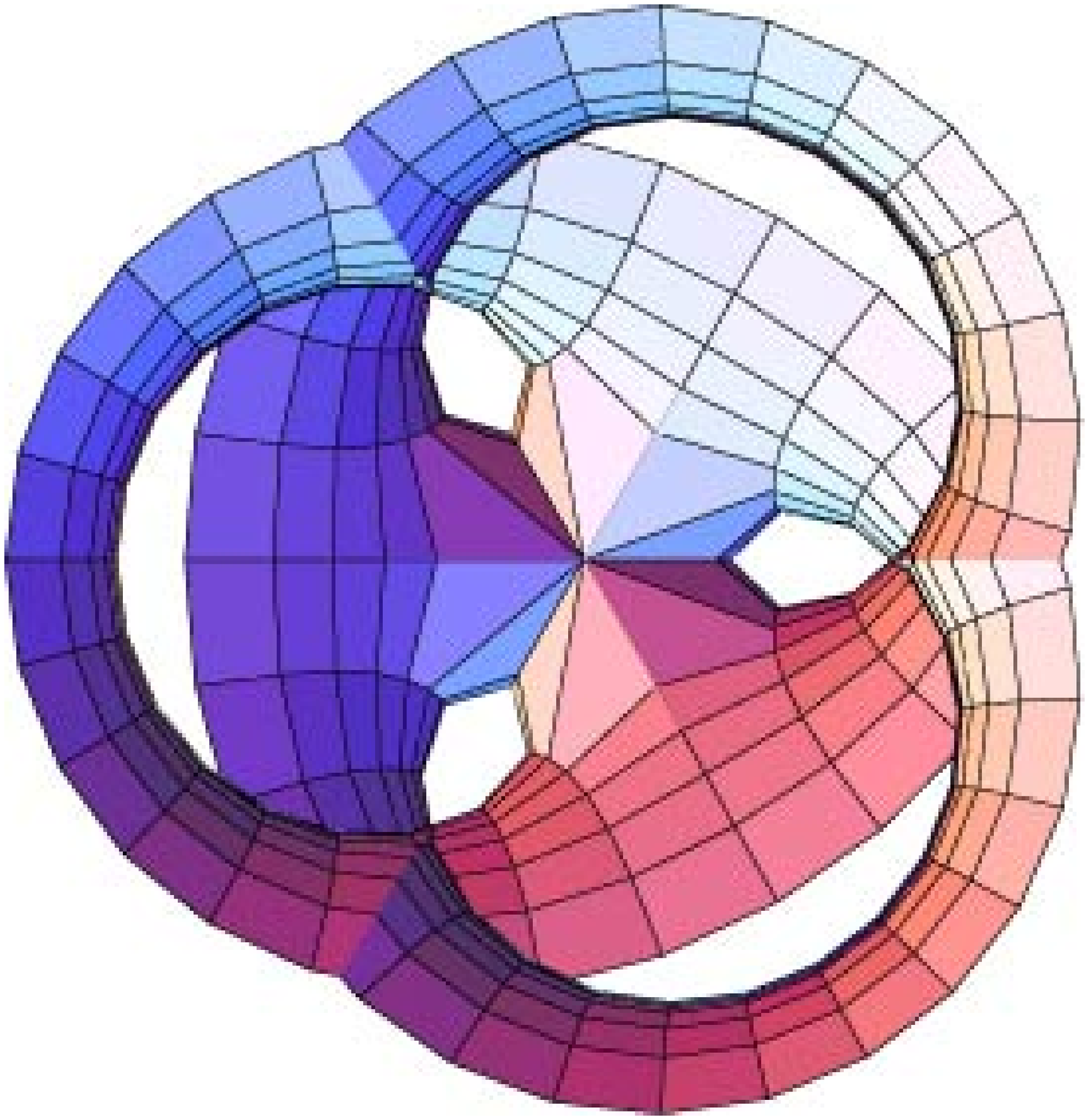} &
 \raisebox{15pt}{\includegraphics[width=.18\linewidth]{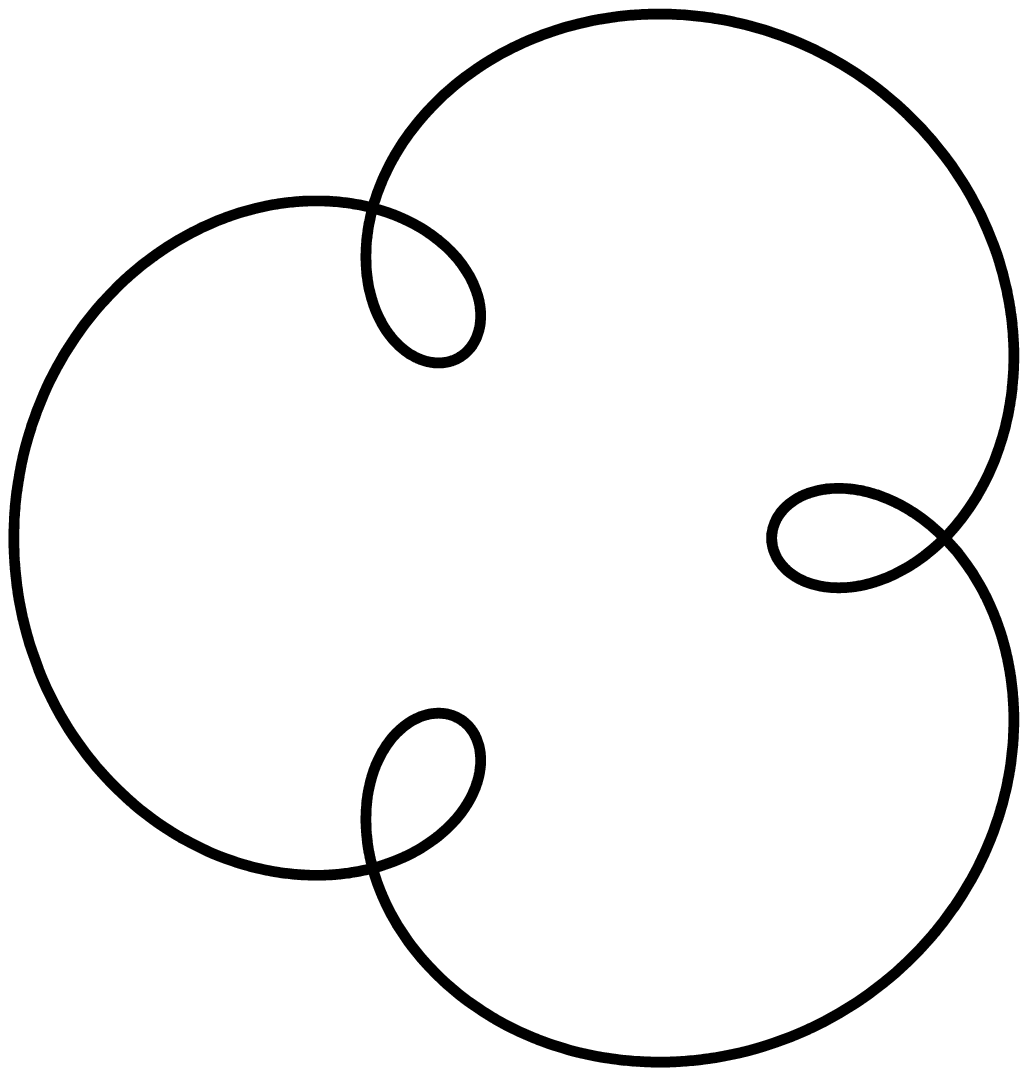}} \\
 \raisebox{50pt}{$k=4$} & 
 \includegraphics[width=.33\linewidth]{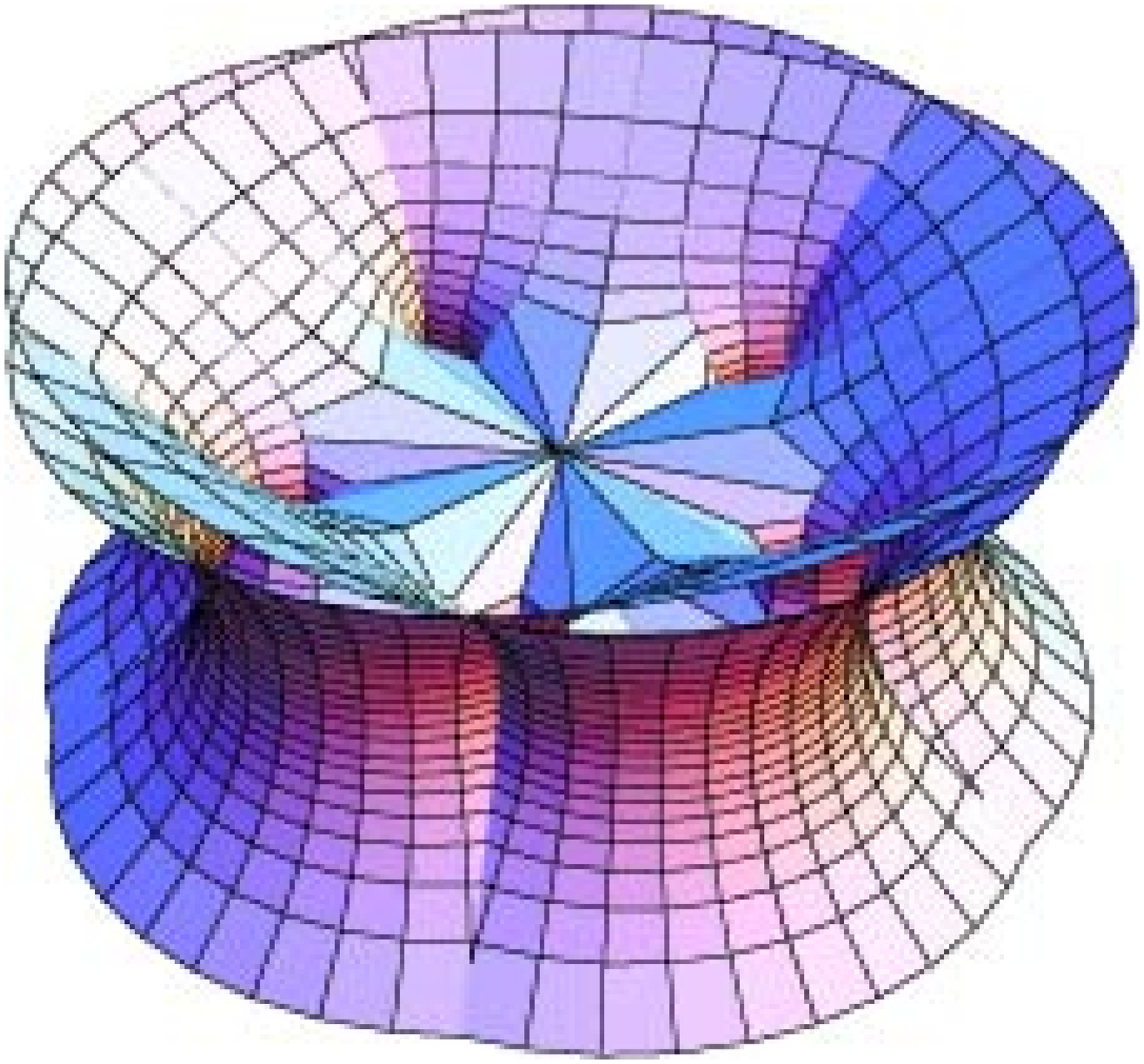} &
 \includegraphics[width=.26\linewidth]{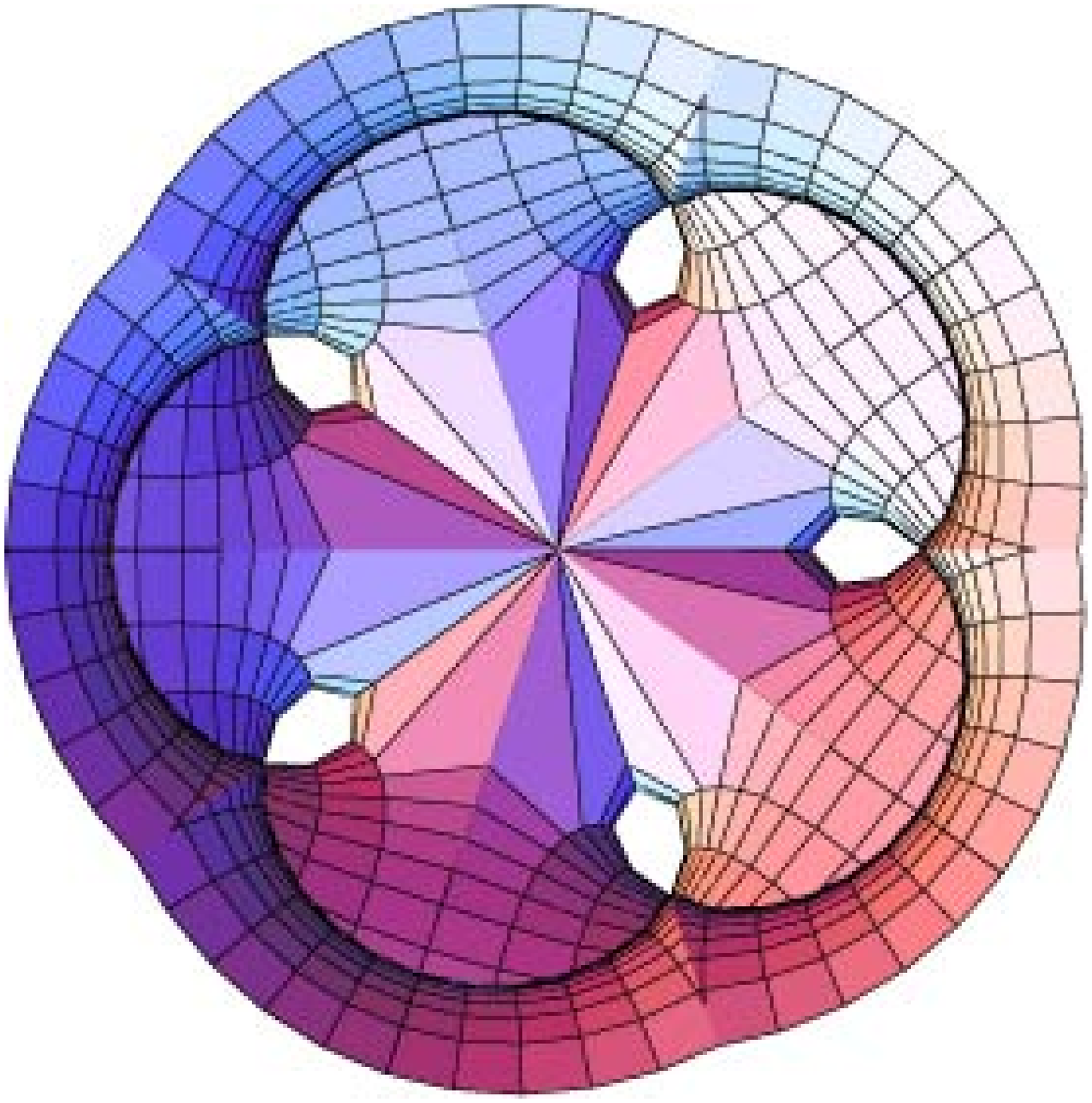} &
 \raisebox{15pt}{\includegraphics[width=.18\linewidth]{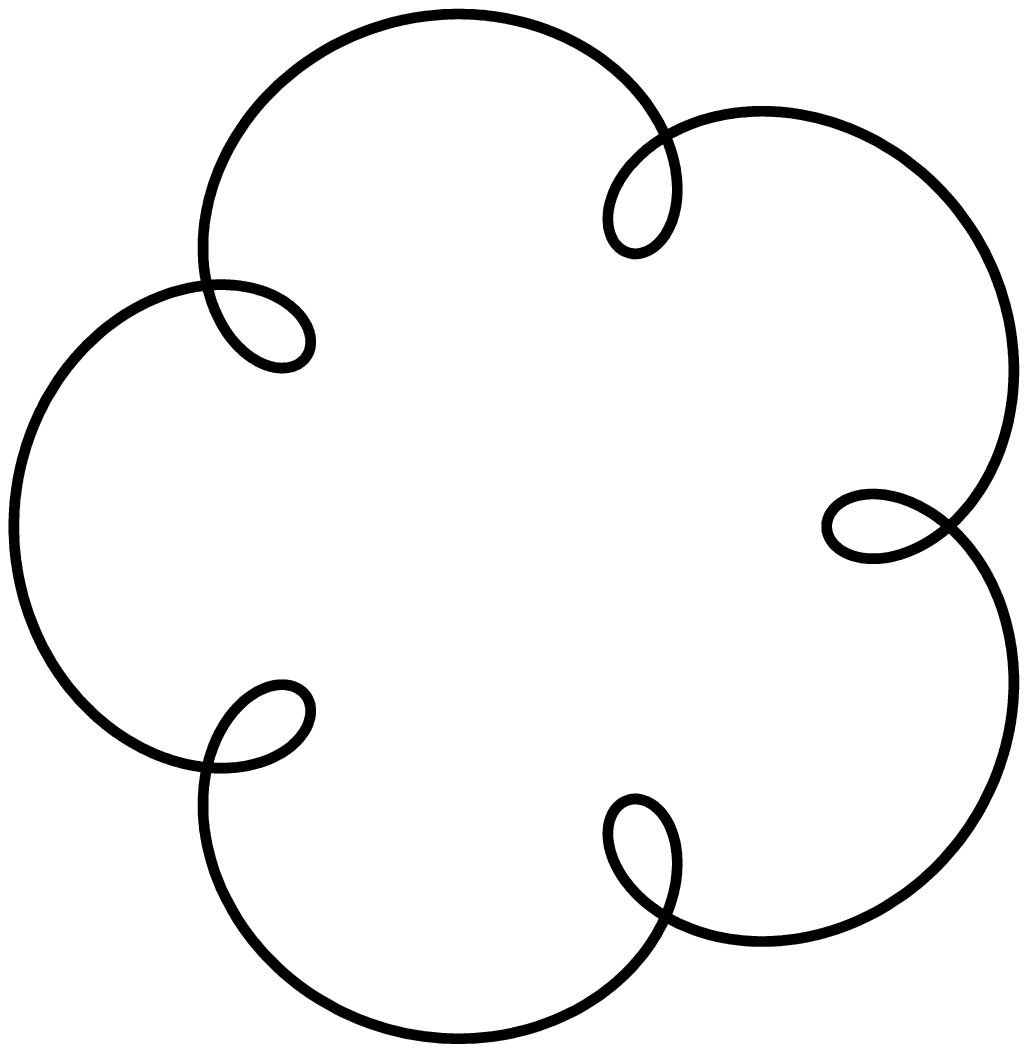}} 
\end{tabular}
\caption{Minimal surfaces of genus $k$ with two ends which 
         satisfy $\deg (g)=k+2$. 
         The middle columns show a half cut away of the surfaces by 
         the $xy$-plane.  
         The right columns show their intersection with the $xy$-plane.}
\label{fg:cc24}
\end{center}
\end{figure} 

Since $\deg(g)=\gamma + 2$, the next corollary follows:

\begin{corollary}\label{co:evengenus}
For all even number $\gamma$, 
there exists a complete conformal minimal surface of genus $\gamma$ 
with two ends which has least total absolute curvature. 
\end{corollary}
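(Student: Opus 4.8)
The plan is to read off this corollary as a direct consequence of the existence theorem proved immediately above, so the work is entirely in assembling facts that are already in hand. First I would recall the translation of the phrase ``least total absolute curvature'' into an algebraic condition. Since $\tau(M)=4\pi\deg(g)$ and every non-catenoidal minimal surface with two ends satisfies the bound $\deg(g)\geq\gamma+2$ from \eqref{eq:ineq}, a surface in this class has least total absolute curvature precisely when the equality $\deg(g)=\gamma+2$ holds. Thus the corollary amounts to exhibiting, for each even $\gamma$, a complete two-ended minimal surface attaining this equality.

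Second, I would invoke the existence theorem: for every positive even $k$ there is a (unique) $a\in(1,\infty)$ for which $f$ in \eqref{eq:surf} is well-defined on $M$. Together with the completeness of the metric \eqref{eq:1stff} recorded in Table~\ref{tb:genus-e}, this produces a complete conformal minimal immersion of $M=\overline{M}_\gamma\setminus\{(0,0),(\infty,\infty)\}$, a genus-$\gamma$ surface with exactly two ends. The Riemann-Hurwitz computation gives $\gamma=k$ when $k$ is even, so as $k$ runs over the positive even integers the genus $\gamma$ runs over all positive even integers.

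Finally I would verify the equality together with the non-catenoid hypothesis. From the orders of zeros and poles of $g$ one reads off $\deg(g)=k+2$, hence $\deg(g)=\gamma+2$ for even $k$, which is exactly equality in \eqref{eq:ineq}. Because the genus is positive, the surface is certainly not a catenoid, so \eqref{eq:ineq} is the governing bound and the surface indeed realizes the minimum total absolute curvature $4\pi(\gamma+2)$. There is no real obstacle at this stage: the substantive difficulty---solving the period condition \eqref{eq:period-even} for $a$ via the intermediate value theorem---has already been discharged in the preceding theorem, and what remains is only the bookkeeping that identifies $k$ with the even genus $\gamma$ and records $\deg(g)=\gamma+2$.
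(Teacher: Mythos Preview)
Your proposal is correct and is essentially the paper's own argument, just spelled out in more detail: the paper simply records that $\deg(g)=\gamma+2$ (since $\gamma=k$ for even $k$ and $\deg(g)=k+2$) and declares the corollary to follow immediately from the preceding existence theorem. Your added remarks about completeness, the two ends, and the non-catenoid check are all implicit in the paper's setup and cause no divergence in approach.
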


Combining Corollaries~\ref{co:genus1} and \ref{co:evengenus} proves 
Main Theorem~\ref{main1} in the introduction. 

%
%
Next we discuss the above minimal surfaces from the point of view of the 
Bj\"orling problem. 
As we mentioned in the introduction, there is a construction method for 
minimal surfaces from a given curve. We show that every minimal surface given 
in this subsection gives a solution to the Bj\"orling problem 
in the higher genus case and the generating curve 
is a closed planar geodesic. 

Let $l$ be a fixed point set of $\kappa_3\circ\kappa_1$. 
Using (1.1), we see that $\kappa_3\circ\kappa_1$ is an isometry, 
and thus $l$ is a geodesic. An explicit description of $l$ is given by 
\[\dfrac{a}{\overline{z}}=z,\qquad \dfrac{1}{c^2\,\overline{w}}=w,\]
that is, 
\[|z|=\sqrt{a},\qquad |w|=\dfrac{1}{c}.\]
Hence we conclude that $l$ is a closed geodesic. 
Moreover, by Lemma~\ref{lm:sym2}, $l$ lies in the $xy$-plane, 
and therefore the assertion follows.

\section{Uniqueness}\label{uniqueness}

In this section, we will prove Main Theorem~\ref{main2} through four subsections. 

\subsection{Symmetry}
First, we refer to some basic results about symmetries of a minimal surface. 
(See p. 349 in \cite{LM}.) 

Let $f:M\to \mathbb{R}^3$ be a conformal minimal immersion, 
with $(g,\,\eta)$ its Weierstrass data. Suppose that $A:M\to M$ 
is a diffeomorphism. $A$ is said to be a {\it symmetry} if there exists $O\in 
\mathcal{O}(3,\,\mathbb{R})$ and $v\in \mathbb{R}^3$ such that 
\[(f\circ A)(p)=O f(p)+v.\]
Denote by ${\rm Sym}(M)$ the group of symmetries of $M$, and by ${\rm Iso}(M)$ 
the isometry group of $M$. Then, by definition, ${\rm Sym}(M)$ is a subgroup of 
${\rm Iso}(M)$. 
Let ${\rm L}(M)$ be the group of holomorphic 
and antiholomorphic diffeomorphisms $\alpha$ of $M$ satisfying 
\[G\circ \alpha (p)=O\circ G (p),\]
where $G:M\to S^2$ is the Gauss map and $O\in \mathcal{O}(3,\,\mathbb{R})$ 
is a linear isometry of $\mathbb{R}^3$. 
We now assume that $f$ is complete, and of finite total curvature. 
Lopez and Martin pointed out that if one of the following three differentials $(1-g^2)\eta,\,i(1+g^2)\eta,\,2g\eta$ is not exact, 
then 
\[{\rm L}(M)={\rm Iso}(M)={\rm Sym}(M). \]

Suppose that $f$ has two ends. By Theorem~\ref{th:huber-osserman}, there exists a compact Riemann 
surface $\overline{M}_{\gamma}$ of genus $\gamma$ and two points 
$p_1,\,p_2 \in \overline{M}_{\gamma}$ such that $M$ 
is conformally equivalent to $\overline{M}_{\gamma}-\{p_1,\,p_2\}$. 
A symmetry of $f(M)$ extends to $\overline{M}_{\gamma}$ leaving the set $\{p_1,\,p_2\}$ 
invariant. 
By the Hurwitz' Theorem, the group ${\rm Sym}(M)$ is finite, and so up to a suitable 
choice of the origin, ${\rm Sym}(M)$ is a finite group $\Delta$ of orthogonal linear 
transformations of $\mathbb{R}^3$. 

We assume that ${\rm Sym}(M)$ has $4(\gamma+1)$ elements ($\gamma\geq 1$) and 
${\rm L}(M)={\rm Iso}(M)={\rm Sym}(M)$. 
If there is no symmetry in $\Delta$ such that either $p_1$ or $p_2$ is fixed, then $\Delta$ 
has at most $4$ elements by a fundamental argument in linear algebra. 
Hence, we may assume without loss of generality that there exists a symmetry such that 
$p_1$ is fixed by the symmetry. 
Up to rotations, we may assume $g(p_1)=0$, and then $\Delta$ leaves the $x_3$-axis invariant. 

We now focus on the following two cases: 
the case $\gamma=1$ with $(d_1,\,d_2)=(1,\,3)$, the even genus case with $(d_1,\,d_2)=(2,\,2)$ 
(for the definition of $d_i$, see Equation \eqref{eq:di}). 
For the former case, every symmetry in $\Delta$ leaves $p_i$ invariant. 
So we see $g(p_2)=0$ or $\infty$. 
For the latter case, we have $|\Delta|\geq 12$, and then there exist at least two symmetries which leave $p_i$ invariant. 
Hence $g(p_2)=0$ or $\infty$. 

Let $\Delta_0$ be the subgroup of holomorphic transformations in $\Delta$, and 
denote by $\mathcal{R}\subset \Delta_0$ the cyclic subgroup of rotations around 
the $x_3$-axis. Clearly, we obtain that 
\begin{equation}\label{ineq:1}
[\Delta:\Delta_0]\leq 2,\quad  [\Delta_0:\mathcal{R}]\leq 2.
\end{equation}
So the subgroups $\Delta_0\subset \Delta$ and $\mathcal{R}\subset \Delta_0$ 
are both normal. 

Let $R$ be the rotation around the $x_3$-axis with the smallest positive angle 
in $\Delta_0$, that is, $\mathcal{R}=\langle R\rangle$. 
We first consider the quotient map 
$\pi_{\mathcal{R}}:\overline{M}_{\gamma}\to \overline{M}_{\gamma}/\mathcal{R}$. 
From 
\eqref{ineq:1}, we see that 
\begin{equation}\label{ineq:2}
\deg(\pi_{\mathcal{R}})=|\mathcal{R}|\geq \gamma+1.
\end{equation}
By the Riemann-Hurwitz formula, we have 
\begin{align}\label{eq:00}
\nonumber |\mathcal{R}|(2-2\gamma(\overline{M}_{\gamma}/\mathcal{R})) &= 2-2\gamma+\sum_{p\in \overline{M}_{\gamma}} (\mu(p)-1) \\
&=  2-2\gamma+2 (|\mathcal{R}|-1)+\sum_{p\in M} (\mu(p)-1), 
\end{align}
where $\gamma(\overline{M}_{\gamma}/\mathcal{R})$ is the genus of 
$\overline{M}_{\gamma}/\mathcal{R}$ and $\mu(p)-1$ is the ramification index at $p$. 
Let $q'_1,\,\dots,\, q'_t$ be ramified values of $\pi_{\mathcal{R}}$ except for 
the $\pi_{\mathcal{R}}(p_i)$'s, and $m_i-1$ 
the ramification index at $p\in \pi_{\mathcal{R}}^{-1}(q'_i)$. 
Note that $2\leq m_i\leq |\mathcal{R}|$ and the ramification index at $p_i$ is 
$|\mathcal{R}|-1$. Combining \eqref{ineq:2} and \eqref{eq:00} yields 
\begin{align}
\label{eq:5} |\mathcal{R}|(2-2\gamma(\overline{M}_{\gamma}/\mathcal{R}))
&=2-2\gamma+2(|\mathcal{R}|-1)+\sum_{i=1}^t(m_i-1)\dfrac{|\mathcal{R}|}{m_i} \\
\nonumber &\geq 2+\sum_{i=1}^t(m_i-1)\dfrac{|\mathcal{R}|}{m_i} >0.
\end{align}
It follows that 
$\gamma(\overline{M}_{\gamma}/\mathcal{R})=0$. This, combined with \eqref{ineq:2} 
and \eqref{eq:5}, implies 
\begin{align}\label{eq:6}
2\gamma=\sum_{i=1}^t(m_i-1)\dfrac{|\mathcal{R}|}{m_i}=
|\mathcal{R}|\left\{\sum_{i=1}^t\left(\dfrac{1}{2}-\dfrac{1}{m_i}\right)
+\dfrac{t}{2}\right\}\geq (\gamma+1)\,\dfrac{t}{2}.
\end{align}
So $t\leq \dfrac{4\gamma}{\gamma+1}<4$, and thus $t=1,\,2,\,3$. We remark that 
$|\mathcal{R}|=\gamma+1,\,2(\gamma+1),\,4(\gamma+1)$. \\
\\
$\underline{{\rm The}\>\> {\rm case}\>\> t=1}$

From the first equality of \eqref{eq:6}, we obtain 
$2\gamma=|\mathcal{R}|(1-\frac{1}{m_1})$.
For the case $|\mathcal{R}|=\gamma+1$, 
$\frac{1}{m_1}=-\frac{\gamma-1}{\gamma+1}\leq 0$, which is absurd. 
Next, for the case $|\mathcal{R}|=2(\gamma+1)$, $m_1=\gamma+1$ holds. Finally, 
$|\mathcal{R}|=4(\gamma+1)$ gives 
$\frac{1}{2}>\frac{\gamma}{2(\gamma+1)}=1-\frac{1}{m_1}\geq 
1-\frac{1}{2}=\frac{1}{2}$ which leads to a contradiction. \\
\\
$\underline{{\rm The}\>\> {\rm case}\>\> t=2}$

We obtain $2\gamma=|\mathcal{R}|(2-\frac{1}{m_1}-\frac{1}{m_2})$ 
from the first equality of \eqref{eq:6}. 
Without loss of generality, we may assume $m_1\leq m_2$. Then, 
\[ 2-\dfrac{2}{m_1}\leq 
\dfrac{2\gamma}{|\mathcal{R}|}=2-\dfrac{1}{m_1}-\dfrac{1}{m_2}
\leq 2-\dfrac{2}{m_2}.\]
For the case $|\mathcal{R}|=\gamma+1$, $\gamma+1\leq m_2\leq |\mathcal{R}|=\gamma+1$ holds, 
and thus $m_2=\gamma+1$ and $m_1=\gamma+1$. 
Next we consider the case $|\mathcal{R}|\geq 2(\gamma+1)$. 
In this case, we have $2-\frac{2}{m_1}\leq \frac{2\gamma}{|\mathcal{R}|}\leq 
\frac{\gamma}{\gamma+1}<1$, and so $m_1<2$, which is absurd.  \\
\\
$\underline{{\rm The}\>\> {\rm case}\>\> t=3}$

It follows from the first equality of \eqref{eq:6} that 
$2\gamma=|\mathcal{R}|(3-\frac{1}{m_1}-\frac{1}{m_2}-\frac{1}{m_3})$. 
We may assume $m_1\leq m_2\leq m_3$. Then 
\[3-\dfrac{3}{m_1}\leq \dfrac{2\gamma}{|\mathcal{R}|}
=3-\dfrac{1}{m_1}-\dfrac{1}{m_2}-\dfrac{1}{m_3}\leq3-\dfrac{3}{m_3}.\]
For the case $|\mathcal{R}|=\gamma+1$, $3-\frac{3}{m_1}\leq \frac{2\gamma}
{|\mathcal{R}|}=\frac{2\gamma}{\gamma+1}<2$. So $m_1<3$, and hence $m_1=2$. 
As a result, 
\[\dfrac{5}{2}-\dfrac{2}{m_2}\leq \dfrac{2\gamma}{\gamma+1}
=\dfrac{5}{2}-\dfrac{1}{m_2}-\dfrac{1}{m_3}\leq\dfrac{5}{2}-\dfrac{2}{m_3}.\]
$\frac{5}{2}-\frac{2}{m_2}\leq \frac{2\gamma}{\gamma+1}$ gives 
$m_2\leq \frac{4(\gamma+1)}{\gamma+5}<4$. Thus $m_2=2,\,3$. 
For the case $m_2=2$, $\frac{2\gamma}{\gamma+1}=2-\frac{1}{m_3}$ holds, 
and so $m_3=\frac{\gamma+1}{2}$. 
For the case $m_2=3$, $\frac{2\gamma}{\gamma+1}=\frac{13}{6}-\frac{1}{m_3}$. 
It follows that $m_3=\frac{6(\gamma+1)}{\gamma+13}<6$, and hence 
$m_3=3,\,4,\,5$. So $\gamma=11,\,23,\,59$. 
For the case $|\mathcal{R}|\geq 2(\gamma+1)$,  
$3-\frac{3}{m_1}\leq \frac{2\gamma}{|\mathcal{R}|}\leq 
\frac{\gamma}{\gamma+1}<1$. So $m_1<\frac{3}{2}$, and this contradicts $m_1\geq 2$. 

\medskip

As a consequence, we obtain the following tables:
\begin{table}[htbp] 
\begin{center}
 \begin{tabular}{|c|c|} \hline
  $|\mathcal{R}|$ & $m_i$ \\ \hline\hline
  $|\mathcal{R}|=\gamma+1$  & do not occur \\ \hline
  $|\mathcal{R}|=2(\gamma+1)$ & $m_1=\gamma+1$ \\ \hline
  $|\mathcal{R}|=4(\gamma+1)$ & do not occur  \\ \hline
 \end{tabular} 
\caption{The case $t=1$.}
\label{tb:t=1}
\end{center}
\end{table} 

\begin{table}[htbp] 
\begin{center}
 \begin{tabular}{|c|c|} \hline
  $|\mathcal{R}|$ & $m_i$ \\ \hline\hline
  $|\mathcal{R}|=\gamma+1$ & $m_1=m_2=\gamma+1$ \\ \hline
  $|\mathcal{R}|\geq 2(\gamma+1)$ & do not occur \\ \hline
 \end{tabular} 
\caption{The case $t=2$.}
\label{tb:t=2}
\end{center}
\end{table} 

\begin{table}[htbp] 
\begin{center}
  \begin{tabular}{|c|c|c|c|c|} \hline
  $|\mathcal{R}|$ & \multicolumn{4}{|c|}{$m_i$} \\ \hline\hline
  & $m_1$ & $m_2$ & $m_3$ & $\gamma$  \\ \cline{2-5}
  &  $2$  & $2$ &  $(\gamma+1)/2$  &  odd ($>1$)\\
  $|\mathcal{R}|=\gamma+1$ &  $2$  & $3$  & $3$ & $11$  \\
  &  $2$  & $3$  & $4$    & $23$ \\
  &  $2$  & $3$  & $5$   & $59$  \\ \hline
  $|\mathcal{R}|\geq 2(\gamma+1)$ & \multicolumn{4}{|c|}{do not occur} \\ \hline
 \end{tabular} 
\caption{The case $t=3$.}
\label{tb:t=3}
\end{center}
\end{table} 

Note that, for $t=2$, $\pi_{\mathcal{R}}$ is a cyclic branched cover of $S^2$, of order $\gamma+1$, 
whose branch points are the fixed points of $\mathcal{R}$, that is, 
$p_1$, $p_2$, $\pi_{\mathcal{R}} ^{-1} (q'_1)$, $\pi_{\mathcal{R}} ^{-1} (q'_2)$. 

For the case $t=1$, $\pi_{\mathcal{R}}^{-1}(q'_1)=\{q_1,\,q_2\}$ for some two points 
$q_1,\,q_2\in \overline{M}_{\gamma}$. 
If $R$ leaves every $q_i$ invariant, then $m_1$ must be $2(\gamma+1)$. Thus $R(q_1)=q_2$ 
and $R(q_2)=q_1$. So $R^2(q_i)=q_i$ and $f(q_1)=f(q_2)\in \{x_3{\rm -axis}\}$. 
Now we consider the quotient map $\pi_{\langle R^2\rangle}:
\overline{M}_{\gamma}\to \overline{M}_{\gamma}/\langle R^2 \rangle$. 
From the Riemann-Hurwitz formula, 
\begin{align*}
|\langle R^2\rangle |\,(2-2\gamma(\overline{M}/\langle R^2\rangle ))&=2-2\gamma+
4(|\langle R^2 \rangle |-1) =2\gamma+2>0.
\end{align*}
Hence we obtain $\gamma(\overline{M}/\langle R^2\rangle )=0$. It follows that 
$\pi_{\langle R^2 \rangle}$ is a cyclic branched cover of $S^2$, of order $\gamma+1$, 
whose branch points are $p_1,\,p_2,\,q_1,\,q_2$. This case corresponds to the case $t=2$, and thus we can determine 
the case $t=1$ after we consider the case $t=2$. 

\medskip

Next, we consider the quotient map $\pi_{\Delta_0}:\overline{M}_{\gamma}
\to \overline{M}_{\gamma}/\Delta_0$ and repeat similar arguments as above. 
From the Riemann-Hurwitz formula, 
we obtain 
\begin{equation}\label{eq:0}
2\gamma -2=|\Delta_0|(2\gamma(\overline{M}_{\gamma}/\Delta_0)-2)+
\sum_{p\in \overline{M}_{\gamma}} (\mu(p)-1).
\end{equation}
We now treat the two cases that there is a 
symmetry $\sigma\in \Delta_0$ satisfying $\sigma(p_1)=p_2$ or not. 
For our case, we may exclude the case $t=3$, and consider the case $t=2$. 
It follows from \eqref{ineq:1} that 
$|\Delta_0|=2(\gamma+1)$ and $\sigma\in \Delta_0\setminus \mathcal{R}$. \\
$\,$\\
\textbf{The case $p_1$ can be transformed to $p_2$}

If there exists such $\sigma$, then the ramification index at $p_i$ 
must be $|\Delta_0|/2-1$. So \eqref{eq:0} can be reduced to 
\[2\gamma -2=|\Delta_0|(2\gamma(\overline{M}/\Delta_0)-2)+
2(|\Delta_0|/2-1)+\sum_{p\in M} (\mu(p)-1). \]
Hence, 
\begin{align}\label{eq:11}
\nonumber 2\gamma &= |\Delta_0|(2 \gamma(\overline{M}/\Delta_0)-1)
+\sum_{p\in M} (\mu(p)-1) \\
 &= 2(\gamma+1) (2 \gamma(\overline{M}/\Delta_0)-1)
+\sum_{p\in M} (\mu(p)-1) \\
\nonumber & \geq 2(\gamma+1) (2 \gamma(\overline{M}/\Delta_0)-1).
\end{align}
So the case $\gamma(\overline{M}/\Delta_0)>0$ leads to a contradiction, and thus 
$\gamma(\overline{M}/\Delta_0)=0$ holds. As a consequence, \eqref{eq:11} 
can be reduced to  
\begin{equation}\label{eq:12}
4\gamma +2= \sum_{p\in M} (\mu(p)-1). 
\end{equation}
Let $r'_1,\,\dots,\, r'_s$ be ramified values of $\pi_{\Delta_0}$ except for the 
$\pi_{\Delta_0}(p_i)$'s, and $m_i-1$ the ramification index at $p\in \pi_{\Delta_0}^{-1}(r'_i)$. 
Note that $2\leq m_i\leq |\Delta_0|$. \eqref{eq:12} can be rewritten as 
\begin{equation}\label{eq:13}
4\gamma +2= \sum_{i=1}^s (m_i-1)\dfrac{|\Delta_0|}{m_i}=2(\gamma+1)\sum_{i=1}^s \left(1-\dfrac{1}{m_i}\right). 
\end{equation}
If $s=1$, then, \eqref{eq:13} yields $2\gamma<0$, which is absurd. Hence $s\geq 2$, and \eqref{eq:13} takes the form 
\begin{align*}
2\gamma &=2(\gamma+1)\left\{\sum_{i=1}^s\left(1-\dfrac{1}{m_i}\right)-1\right\}
=2(\gamma+1)\left\{\dfrac{s-2}{2}+
\sum_{i=1}^s\left(\dfrac{1}{2}-\dfrac{1}{m_i}\right)   \right\}.
\end{align*}
So
\[1>\dfrac{2\gamma}{2(\gamma+1)}= \dfrac{s-2}{2}+
\sum_{i=1}^s\left(\dfrac{1}{2}-\dfrac{1}{m_i}\right) \geq 
\dfrac{s-2}{2}.\]
As a result,  $2\leq s<4$ follows, and thus $s=2,\,3$. \\
$\,$\\
$\underline{{\rm The}\>\> {\rm case}\>\> s=2}$

\eqref{eq:13} implies 
\[2\gamma =2(\gamma+1)\left(1-\dfrac{1}{m_1}-\dfrac{1}{m_2}\right),\]
that is, 
\[\dfrac{1}{m_1}+\dfrac{1}{m_2}=\dfrac{1}{\gamma+1}\]
holds. The inequalities $2\leq m_i \leq |\Delta_0| = 2(\gamma+1)$ yield 
$m_1=m_2=2(\gamma+1)$. \\
$\,$\\
$\underline{{\rm The}\>\> {\rm case}\>\> s=3}$

From \eqref{eq:13}, 
\begin{equation}\label{ineq:0}
2\gamma=2(\gamma+1)\left( 2-\dfrac{1}{m_1} 
-\dfrac{1}{m_2}-\dfrac{1}{m_3} \right).
\end{equation}
Without loss of generality, we may assume 
$m_1\leq m_2\leq m_3$. In this case, 
\[2-\dfrac{3}{m_1} \leq 2-\dfrac{1}{m_1}-\dfrac{1}{m_2}-\dfrac{1}{m_3} \leq
2-\dfrac{3}{m_3}.\]
By \eqref{ineq:0}, we obtain  
\[2(\gamma+1) \left( 2-\dfrac{3}{m_1}\right) \leq 
2(\gamma+1) \left( 2-\dfrac{1}{m_1} 
-\dfrac{1}{m_2}-\dfrac{1}{m_3} \right) = 2\gamma.\]
Thus we have 
\[m_1\leq \dfrac{3(\gamma+1)}{\gamma+2}<3.\]
Hence $m_1=2$. Moreover, let us consider the case $m_1=2$ and $m_2\leq m_3$. 
Then  
\[2(\gamma+1) \left( \dfrac{3}{2} -\dfrac{2}{m_2}\right) \leq 
2(\gamma+1) \left( 2-\dfrac{1}{2} -\dfrac{1}{m_2}-\dfrac{1}{m_3} \right) 
= 2\gamma,\]
and so 
\[m_2\leq \dfrac{4(\gamma+1)}{\gamma+3}<4.\]
It follows that $m_2=2,\,3$. 
For the case $m_2=2$, $2\gamma=2(\gamma+1)\left(1-\dfrac{1}{m_3}\right)$, 
and thus $m_3= \gamma+1$. 
For the case $m_2=3$,  
$2\gamma=2(\gamma+1)\left(\dfrac{7}{6}-\dfrac{1}{m_3}\right)$ and so 
$m_3= \dfrac{6(\gamma+1)}{\gamma+7}<6$. As a consequence, $(m_3,\,\gamma)=(3,\,5),\,(4,\,11),\,(5,\,29)$. \\
$\,$\\
\textbf{The case $p_1$ cannot be transformed to $p_2$}

If there does not exist $\sigma$ satisfying $\sigma(p_1)=p_2$, then the ramification 
index at $p_i$ must be $|\Delta_0|-1$. It follows that \eqref{eq:0} can be reduced to 
\[2\gamma -2=|\Delta_0|(2\gamma(\overline{M}/\Delta_0)-2)+
2(|\Delta_0|-1)+\sum_{p\in M} (\mu(p)-1),\]
and thus 
\begin{equation}\label{eq:1}
2\gamma =2|\Delta_0|\gamma(\overline{M}/\Delta_0)+\sum_{p\in M} (\mu(p)-1)
=4(\gamma+1)\gamma(\overline{M}/\Delta_0)+\sum_{p\in M} (\mu(p)-1).
\end{equation}
\eqref{eq:1} yields $\gamma(\overline{M}/\Delta_0)=0$, and so 
\begin{equation}\label{eq:4}
2\gamma =\sum_{p\in M} (\mu(p)-1).
\end{equation}
Suppose that $r'_1,\,\cdots,\, r'_s$ are ramified values of $\pi_{\Delta_0}$ 
except for the 
$\pi_{\Delta_0}(p_i)$'s, and $m_i-1$ the ramification index at $p\in \pi_{\Delta_0}^{-1}(r'_i)$ 
as above. \eqref{eq:4} can be rewritten as 
\[2\gamma =\sum_{i=1}^s (m_i-1)\dfrac{|\Delta_0|}{m_i}
=2(\gamma+1)\sum_{i=1}^s\left(1-\dfrac{1}{m_i}\right).\]
As a result, 
\[1>\dfrac{2\gamma}{2\gamma+2}= \sum_{i=1}^s\left(1-\dfrac{1}{m_i}\right)
=\sum_{i=1}^s\left\{\dfrac{1}{2}+ \left(\dfrac{1}{2}-\dfrac{1}{m_i}\right) \right\} 
\geq \dfrac{s}{2},\]
and hence $1\leq s <2$, that is, $s=1$. Thus, we have $m_1=\gamma+1$. 

\medskip

Therefore, we obtain the following tables: 
\begin{table}[htbp] 
\begin{center}
 \begin{tabular}{|c|c|c|c|c|} \hline
 \multicolumn{2}{|c|}{$s$} & \multicolumn{3}{|c|}{$m_i$} \\ \hline\hline
 \multicolumn{2}{|c|}{$s=2$} &  \multicolumn{3}{|c|}{$m_1=m_2=2(\gamma+1)$} \\ \hline
   & $m_1$ & $m_2$ & $m_3$ & $\gamma$  \\ \cline{2-5}
  &  $2$  & $2$ &  $\gamma+1$  &  arbitrary  \\
 $s=3$ &  $2$  & $3$  & $3$ & $5$  \\
  &  $2$  & $3$  & $4$    & $11$ \\
  &  $2$  & $3$  & $5$   & $29$  \\ \hline
 \end{tabular} 
\caption{The case $p_1$ can be transformed to $p_2$.}
\label{tb:p1-to-p2}
\end{center}
\end{table} 

\begin{table}[htbp] 
\begin{center}
 \begin{tabular}{|c|c|} \hline
  $s$ & $m_i$ \\ \hline\hline
  $s=1$ & $m_1=\gamma+1$ \\ \hline
 \end{tabular} 
\caption{The case $p_1$ cannot be transformed to $p_2$.}
\label{tb:p1-to-p1}
\end{center}
\end{table} 

\medskip

\subsection{Weierstrass data for the case $\gamma=1$ with $(d_1,\,d_2)=(1,\,3)$}

By Tables~\ref{tb:t=1}--\ref{tb:t=3}, we first consider the case $t=2$. 
Then $|\mathcal{R}|=2$ and we find $|\Delta_0|=4$ by \eqref{ineq:1}. 
Set $q_1$, $q_2$ as two branch points of $\pi_{\mathcal{R}}$ distinct from 
the $p_i$'s and 
$p'_i:=\pi_{\mathcal{R}}(p_i)$, $q'_i:=\pi_{\mathcal{R}}(q_i)$. 
Since $\pi_{\mathcal{R}}$ is a cyclic branched double cover of $S^2$, 
$\overline{M}_{1}$ can be given by 
\[v^{2}=(u-p'_1)^{m_1h_1}(u-p'_2)^{m_2h_2}(u-q'_1)^{m_3h_3}
(u-q'_2)^{m_4},\]
where $h_i\in \{1,\,-1\}$ ($i=1,\,2,\,3$), $(2,\,m_i)=1$, and $R(u,\,v)=\left(u,\,-v\right)$. 

Since $(d_1,\,d_2)=(1,\,3)$, 
there does not exist $\sigma \in \Delta_0$ satisfying 
$\sigma(p_1)=p_2$. 
By Table~\ref{tb:p1-to-p1} on $\Delta_0$, there exists a transformation 
$\tau\in \Delta_0 \setminus \mathcal{R}$ satisfying $\tau (q_1)=q_2$, and thus $m_3=m_4$. 
$\tau$ induces a degree $2$ transformation $\tau':\overline{M}_{1}/\mathcal{R} 
\to \overline{M}_{1}/\Delta_0$, that is, a transformation on $S^2$ such that 
$\tau' (q'_1)=\tau'(q'_2)$. 
Choosing suitable variables $(z,\,w)$, 
we can represent $\tau(z,\,w)=(-z,\,*)$, $\tau'(z)=z^2$, 
$p'_1=0$, $p'_2=\infty$, $q'_1=1$, $q'_2=-1$, and moreover, $\overline{M}_{1}$ can be rewritten as 
$w^2=z (z^2-1)$ 
(see Figure~\ref{fg:M-1}). 

\begin{figure}[htbp] 
\begin{center}
\begin{picture}(340,100)

\thicklines

\put(0,80){\line(1,0){125}} \put(0,20){\line(1,0){125}} 

\put(20,86){$p'_1$} \put(50,86){$p'_2$} \put(80,86){$q'_1$}
\put(104,86){$q'_2$}
\put(20,10){$p'_1$} \put(50,10){$p'_2$} 

\put(135,50){$\tau'$}

\put(180,80){\line(1,0){125}} \put(180,20){\line(1,0){125}} 

\put(200,86){$0$} \put(230,86){$\infty$} \put(260,86){$1$}
\put(284,86){$-1$}
\put(200,10){$0$} \put(230,10){$\infty$} \put(272,10){$1$}

\put(310,80){$z$} \put(310,20){$z^2$}

\put(150,78){$\cong$} \put(150,18){$\cong$}

\thinlines

\put(20,80){\line(0,-1){60}} \put(50,80){\line(0,-1){60}} 
\put(80,80){\line(1,-5){12}} \put(104,80){\line(-1,-5){12}}

\put(200,80){\line(0,-1){60}} \put(230,80){\line(0,-1){60}} 
\put(260,80){\line(1,-5){12}} \put(284,80){\line(-1,-5){12}}

\put(130,70){\vector(0,-1){40}} \put(313,75){\vector(0,-1){45}}
\end{picture}
\caption{The Riemann surface $\overline{M}_{1}$.}
\label{fg:M-1}
\end{center}
\end{figure}
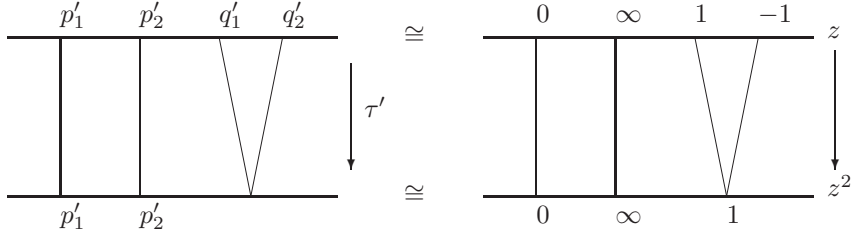 

Now we consider the Gauss map. 
$p_1,\,p_2,\,q_1,\,q_2$ are fixed points of $\mathcal{R}$, that is, fixed points by rotations around 
the $x_3$-axis. So we have 
$g(\{p_1,\,p_2,\,q_1,\,q_2\})\subset \{0,\,\infty\}$. Note that $q_1$ can be transformed to 
$q_2$ by the biholomorphism $\tau$. On the other hand, $p_1$ cannot 
be transformed to $p_2$. 
It follows that we essentially only need to consider the two cases 
as in Figure~\ref{fg:g-1}: 

\begin{figure}[htbp] 
\begin{center}
\begin{picture}(340,90)

\thicklines

\put(20,3){\line(1,0){120}} \put(180,3){\line(1,0){120}}

\put(55,65){\circle*{2}}\put(55,45){\circle*{2}}\put(55,25){\circle*{2}}
\put(105,45){\circle*{2}}
\put(215,60){\circle*{2}} \put(215,30){\circle*{2}}
\put(265,60){\circle*{2}} \put(265,30){\circle*{2}}

\put(42,65){$p_1$}\put(42,45){$q_1$}\put(42,25){$q_2$}
\put(92,45){$p_2$}
\put(202,60){$p_1$}\put(202,30){$p_2$}
\put(252,60){$q_1$}\put(252,30){$q_2$}

\put(52,-7){$0$}\put(102,-7){$\infty$}
\put(212,-7){$0$}\put(262,-7){$\infty$}

\thinlines

\put(155,0){$S^2$} \put(154,55){$\overline{M}_{1}$} \put(164,35){$g$}

\put(55,3){\line(0,1){80}} \put(105,3){\line(0,1){80}}
\put(215,3){\line(0,1){80}} \put(265,3){\line(0,1){80}}

\put(159,50){\vector(0,-1){37}}

\end{picture}
\caption{The possibilities of the Gauss map.}
\label{fg:g-1}
\end{center}
\end{figure}
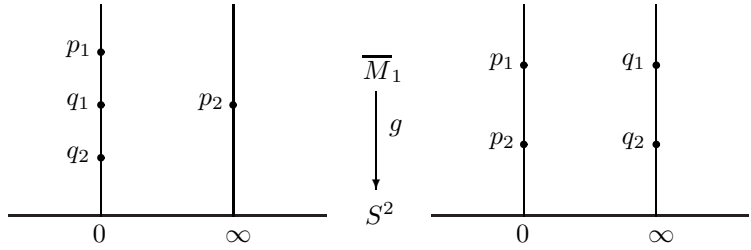 

In the right hand side case in Figure~\ref{fg:g-1}, 
the ramification index at $q_i$ is 
$\frac{\gamma+2}{2}-1\notin \mathbb{Z}$ 
since the ramification index at $q_1$ of $g$ must 
coincide with the ramification index at $q_2$ of $g$. 
Hence we only consider the left hand side case in Figure~\ref{fg:g-1}. 
The ramification index at $p_2$ may be $1-1,\,2-1,\,3-1$. 
If the ramification index is $2-1$, $g^{-1}(\infty)$ consists of $p_2$ and a 
simple pole $q\in \overline{M}_{1}$. Then $R(q)$ must be a pole of $g$, but 
$R(q)\notin \{p_2,\,q\}$.  
This contradicts $R(q)\in g^{-1}(\infty)$.  
So the divisor of $g$ is given by
\[
(g)=
\begin{cases}
p_1+q_1+q_2-3 p_2 \\
p_1+q_1+q_2-p_2-Q-R(Q) 
\end{cases}
\]
for a point $Q$. 
Since $\tau (p_2)=p_2$, $\tau$ leaves $\{$the poles of $g\}$ invariant. 
For the latter case, if we take $Q^*:=\tau (Q)$, then $Q^*$ must be a pole of $g$ 
which is distinct from the $R^i(Q)$'s. It leads to a contradiction, and so we only consider the former 
case. In this case, the divisor of the meromorphic function $z(z^2-1)$ coincides with 
that of $g^2$, Thus $g^2=c'\,z(z^2-1)$ holds for some constant $c'$. 
Hence, $\overline{M}_1$ and $g$ can be rewritten as 
\[ w^2=z(z^2-1),\quad  g=c\,w\]
for some constant $c$, and $R(z,\,w)=(z,\,-w)$, $\tau(z,\,w)=(-z,\,iw)$. Then, the divisor of $\eta$ is obtained by
\[(\eta)=
\begin{cases}
-2 p_1+2 p_2 \\
-4 p_1+4 p_2.
\end{cases}
\]
Thus, by a similar argument, 
\[
\eta^2=
\begin{cases}
c''\,\dfrac{(dz)^2}{z^3(z^2-1)} \\
c''\,\dfrac{(dz)^2}{z^5(z^2-1)}
\end{cases}
=
\begin{cases}
c''\left(\dfrac{dz}{z\,w}\right)^2 \\
c''\left(\dfrac{dz}{z^2\,w}\right)^2
\end{cases}
\]
hold for some constant $c''$. As a consequence, we obtain 
$\eta=c'''\dfrac{dz}{z\,w},\,c'''\dfrac{dz}{z^2\,w}$ for some constant $c'''$. 
The latter case is given in \S\ref{sec:genus1}, and we shall prove that the former case does not occur in 
\S~\ref{uniqueness-thm}. Note that the case $t=1$ does not occur in this case.

\subsection{Weierstrass data for the even genus case with $(d_1,\,d_2)=(2,\,2)$} 

We treat the case $t=2$ ($|\mathcal{R}|=\gamma+1$, $|\Delta_0|=2(\gamma+1)$). 
By Table~\ref{tb:t=2}, 
$\pi_{\mathcal{R}}:\overline{M}_{\gamma}\to \overline{M}_{\gamma}/\mathcal{R}$ 
is a cyclic branched cover of $S^2$. 
Thus $\overline{M}_{\gamma}$ can be represented by 
\[
v^{\gamma+1}=(u-p'_1)^{m_1h_1}(u-p'_2)^{m_2h_2}(u-q'_1)^{m_3h_3}(u-q'_2)^{m_4},
\]
where $(m_i,\,\gamma+1)=1$, $h_i=\pm 1$, and $R(u,\,v)=\left(u,\,e^{\frac{2 \pi}{\gamma+1}i}v\right)$. \\
\\
\textbf{The case $p_1$ cannot be transformed to $p_2$}

We assume that  there does not exist $\sigma \in \Delta_0$ such that 
$\sigma(p_1)=p_2$. From the table on $\Delta_0$, there exists a transformation 
$\tau\in \Delta_0\setminus \mathcal{R}$ satisfying $\tau (q_1)=q_2$, and thus $m_3=m_4$. 
$\tau$ induces a degree $2$ transformation $\tau':\overline{M}_{1}/\mathcal{R} 
\to \overline{M}_{1}/\Delta_0$, that is, a transformation  on $S^2$ such that $\tau' (q'_1)=\tau'(q'_2)$. 
Choosing suitable variables $(z,\,w)$, 
we can represent $\tau(z,\,w)=(-z,\,*)$, $\tau'(z)=z^2$, 
$p'_1=0$, $p'_2=\infty$, $q'_1=1$, $q'_2=-1$, and moreover, $\overline{M}_{\gamma}$ can be rewritten as 
$w^{\gamma+1}=z^{m_1 h_1}(z^2-1)^{m_3}$ and $R(z,\,w)=\left(z,\,e^{\frac{2 \pi}{\gamma+1}i}w\right)$  
(see Figure~\ref{fg:M-2}).

\begin{figure}[htbp] 
\begin{center}
\begin{picture}(340,100)

\thicklines

\put(0,80){\line(1,0){125}} \put(0,20){\line(1,0){125}} 

\put(20,86){$p'_1$} \put(50,86){$p'_2$} \put(80,86){$q'_1$}
\put(104,86){$q'_2$}
\put(20,10){$p'_1$} \put(50,10){$p'_2$} 

\put(135,50){$\tau'$}

\put(180,80){\line(1,0){125}} \put(180,20){\line(1,0){125}} 

\put(200,86){$0$} \put(230,86){$\infty$} \put(260,86){$1$}
\put(284,86){$-1$}
\put(200,10){$0$} \put(230,10){$\infty$} \put(272,10){$1$}

\put(310,80){$z$} \put(310,20){$z^2$}

\put(150,78){$\cong$} \put(150,18){$\cong$}

\thinlines

\put(20,80){\line(0,-1){60}} \put(50,80){\line(0,-1){60}} 
\put(80,80){\line(1,-5){12}} \put(104,80){\line(-1,-5){12}}

\put(200,80){\line(0,-1){60}} \put(230,80){\line(0,-1){60}} 
\put(260,80){\line(1,-5){12}} \put(284,80){\line(-1,-5){12}}

\put(130,70){\vector(0,-1){40}} \put(313,75){\vector(0,-1){45}}
\end{picture}
\caption{The Riemann surface $\overline{M}_{\gamma}$.}
\label{fg:M-2}
\end{center}
\end{figure}
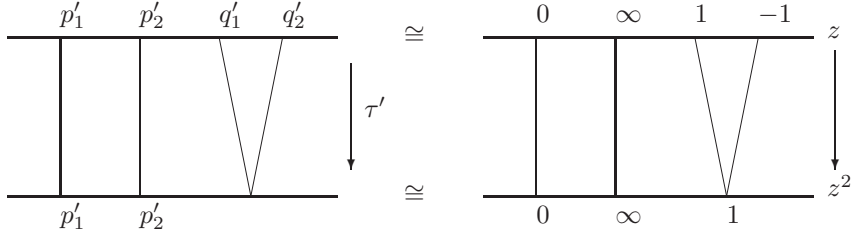 

Now we consider the Gauss map. 
$p_1,\,p_2,\,q_1,\,q_2$ are fixed points of $\mathcal{R}$, that is, fixed points by rotations around the 
$x_3$-axis. So we find $g(\{p_1,\,p_2,\,q_1,\,q_2\})\subset \{0,\,\infty\}$. Note that $q_1$ can be transformed to 
$q_2$ by the biholomorphism $\tau$. On the other hand, $p_1$ cannot 
be transformed to $p_2$. 
It follows that we essentially only need to consider the two cases 
as in Figure~\ref{fg:g-2}: 

\begin{figure}[htbp] 
\begin{center}
\begin{picture}(340,90)

\thicklines

\put(20,3){\line(1,0){120}} \put(180,3){\line(1,0){120}}

\put(55,65){\circle*{2}}\put(55,45){\circle*{2}}\put(55,25){\circle*{2}}
\put(105,45){\circle*{2}}
\put(215,60){\circle*{2}} \put(215,30){\circle*{2}}
\put(265,60){\circle*{2}} \put(265,30){\circle*{2}}

\put(42,65){$p_1$}\put(42,45){$q_1$}\put(42,25){$q_2$}
\put(92,45){$p_2$}
\put(202,60){$p_1$}\put(202,30){$p_2$}
\put(252,60){$q_1$}\put(252,30){$q_2$}

\put(52,-7){$0$}\put(102,-7){$\infty$}
\put(212,-7){$0$}\put(262,-7){$\infty$}

\thinlines

\put(155,0){$S^2$} \put(154,55){$\overline{M}_{\gamma}$} \put(164,35){$g$}

\put(55,3){\line(0,1){80}} \put(105,3){\line(0,1){80}}
\put(215,3){\line(0,1){80}} \put(265,3){\line(0,1){80}}

\put(159,50){\vector(0,-1){37}}

\end{picture}
\caption{The possibilities of the Gauss map.}
\label{fg:g-2}
\end{center}
\end{figure}
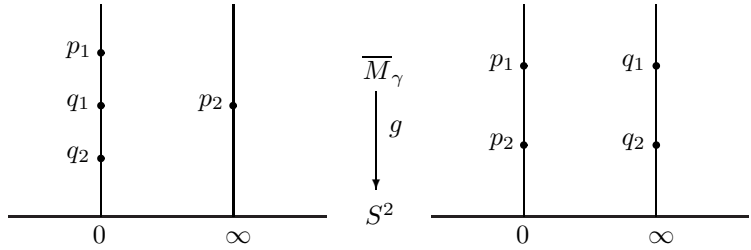 

\medskip 

\noindent
\underline{The left hand side case in Figure~\ref{fg:g-2}}

The divisor of $g$ is given by 
\[
(g)=
\begin{cases}
(\gamma+2-2N)p_1+N q_1+N q_2-(\gamma+2)p_2 \\
(\gamma+2-2N)p_1+N q_1+N q_2-p_2-Q-R(Q)-\cdots - R^{\gamma}(Q)
\end{cases}
\]
for a point $Q$. Note that $N>0$ and $\gamma+2-2N>0$. 
Since $\tau (p_2)=p_2$, $\tau$ leaves $\{$the poles of $g\}$ invariant. 
For the latter case, if we take $Q^*:=\tau (Q)$, then $Q^*$ must be a pole of $g$ 
which is distinct from the $R^i(Q)$'s.  
This leads to a contradiction, and so we only consider the former case. 
Then the divisor of $\eta$ is given by
\[
(\eta)=-3 p_1+(2\gamma+1) p_2. 
\]
Hence the divisor of $g\eta$ is obtained by 
\begin{align*}
(g\eta)&=(\gamma-2N-1)p_1+Nq_1+Nq_2+(\gamma-1) p_2. 
\end{align*}
If $\gamma-2N-1\geq 0$, then $g\eta$ is holomorphic. Thus $f$ is bounded and 
this leads to a contradiction. As a result, $\gamma-2N-1<0$ follows. 
The inequality $\gamma+2-2N>0$ yields $N<\frac{\gamma}{2}+1$. 
Also, since $\gamma$ is even, $N\leq \frac{\gamma}{2}$ holds. 
So we have $\gamma=2N$. 

It follows that the divisor of $g^{\gamma+1}$ coincides with that of 
$z^2(z^2-1)^{\frac{\gamma}{2}}$. Therefore, $\overline{M}_{\gamma}$ and $g$ can be 
rewritten as 
\[w^{\gamma+1}=z^2(z^2-1)^{\frac{\gamma}{2}},\>\>
g=c\,w\]
for some constant $c$, and $R(z,\,w)=(z,\,e^{\frac{2\pi}{\gamma+1} i}w)$, $\tau(z,\,w)=(-z,\,w)$. 
Furthermore, the divisor of $\eta^{\gamma+1}$ coincides with 
that of $\dfrac{(dz)^{\gamma+1}}{z^{\gamma-1}\,g^{2(\gamma+1)}}$. Hence 
\[\eta^{\gamma+1}=c''\,\dfrac{(dz)^{\gamma+1}}{z^{\gamma-1}\,g^{2(\gamma+1)}}
\left(=c''\,z^2\dfrac{(dz)^{\gamma+1}}{z^{\gamma+1}\,g^{2(\gamma+1)}}\right)\]
for some constant $c''$. By setting $z=u^{\gamma+1}$ and $w=u^2 v$, $\overline{M}_{\gamma}$ 
can be rewritten as 
\[
v^{\gamma+1} = (u^{2(\gamma+1)}-1)^{\frac{\gamma}{2}}, 
\]
and moreover 
\[
g= c u^2 v, \quad \eta=c''' \dfrac{u}{g^2}du 
\]
for some constant $c'''$. However, in this case, its genus is greater than $\gamma$, and such a case is excluded. 

\medskip 

\noindent
\underline{The right hand side case in Figure~\ref{fg:g-2}}

The divisor of $g$ is obtained by 
\[(g)=(\gamma+2-N)p_1+N p_2-\dfrac{\gamma+2}{2} q_1-\dfrac{\gamma+2}{2} q_2,\]
where $N>0$ and $\gamma+2-N>0$. 
Also, the divisor of $\eta$ is given by 
\[(\eta)=-3 p_1-3 p_2+(\gamma+2)q_1+(\gamma+2)q_2. \]
Thus the divisor of $g\eta$ is obtained by 
\begin{align*}
(g\eta)=(\gamma-N-1)p_1+(N-3) p_2 +\frac{\gamma+2}{2}q_1+\frac{\gamma+2}{2}q_2. 
\end{align*}
If $\gamma-N-1 \geq 0$ and $N-3\geq 0$, then $g\eta$ is holomorphic. 
Hence $\gamma-N-1<0$ or $N-3<0$. 
From the inequality $\gamma+2-N>0$, the case $\gamma-N-1<0$ corresponds to $\gamma=N,\,N-1$. 
The case $N-3<0$ implies $N=1,\,2$. Essentially, we consider the cases 
$N=1,\,2$. 
The divisor of $g^{\gamma+1}$ coincides with 
$\dfrac{z^{\gamma+2-N}}{(z^2-1)^{\frac{\gamma+2}{2}}}$. 
As a consequence, 
$\overline{M}_{\gamma}$ and $g$ can be rewritten as 
\[w^{\gamma+1}=\dfrac{(z^2-1)^{\frac{\gamma+2}{2}}}{z^{\gamma+2-N}},\>\> 
g=\dfrac{c}{w},\] 
for some constant $c$. 
If $N=1$, then $\gamma+2-N$ and $\gamma+1$ are not coprime. 
So $N=2$, and $R(z,\,w)=(z,\,e^{\frac{2\pi}{\gamma+1} i}w)$, 
$\tau(z,\,w)=(-z,\,w)$. 
Furthermore, the divisor $\eta^{\gamma+1}$ coincides with that of 
$\dfrac{(z^2-1)^{2}}{z^{\gamma+3}}(dz)^{\gamma+1}$. 
As a consequence, 
\[\eta^{\gamma+1}=c'\, \dfrac{(z^2-1)^{2}}{z^{\gamma+3}}(dz)^{\gamma+1}
\left(=\dfrac{c'}{z^6}\,\left(\dfrac{z^3\,w^4\,dz}{(z^2-1)^2}\right)^{\gamma+1}\right)\]
for some constant $c'$. By similar arguments as above, 
we may exclude this case except the case $\gamma=2$. 
For the case $\gamma=2$, $\overline{M}_{\gamma}$ and $g$ can be rewritten as 
\[
w^{3}=\dfrac{(z^2-1)^2}{z^2},\>\> g=\dfrac{c}{w},
\] 
and moreover, we find $\eta=c''\frac{w}{z}dz$ for some constants $c$, $c''$. 
However, this surface has a transformation $\sigma\in \Delta_0$ defined by $\sigma (z,\,w)=(\frac{1}{z},\,w)$, and we have 
$\sigma(p_1)=p_2$. This contradicts our assumption. 

\medskip

\noindent
\textbf{The case $p_1$ can be transformed to $p_2$}

Suppose that there exists $\sigma \in \Delta_0$ such that 
$\sigma(p_1)=p_2$. 
By Table~\ref{tb:p1-to-p2}, we consider two cases, that is, 
the case $s=2$ and the case $s=3$. 
Note that $\sigma(p_2)=p_1$ and $\sigma\in \Delta_0\setminus \mathcal{R}$. 

\medskip 

\noindent
\underline{The case $s=2$} 

By Table~\ref{tb:p1-to-p2}, every $q_i$ must be branch 
points of $\pi_{\Delta_0}$ with the ramified index $2(\gamma+1)-1$. 
Hence, $\sigma (q_i)=q_i$ for $i=1,\,2$, and moreover, $\sigma$ induces a degree $2$ transformation 
$\sigma':\overline{M}_{\gamma}/\mathcal{R}\to \overline{M}_{\gamma}/\Delta_0$, 
that is a transformation on $S^2$ 
and the $q'_i$'s are two fixed points of $\sigma'$  
(see Figure~\ref{fg:M-3}).  

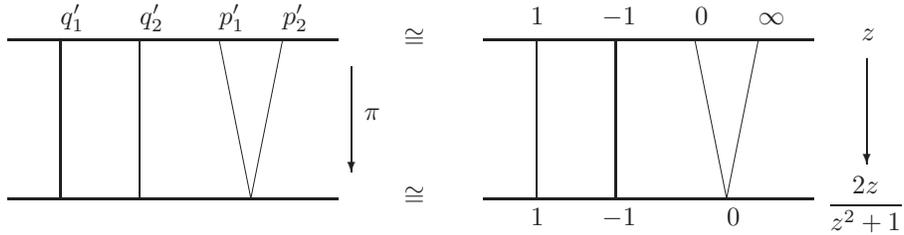
\begin{figure}[htbp] 
\begin{center}

\begin{picture}(340,100)

\thicklines

\put(0,80){\line(1,0){125}} \put(0,20){\line(1,0){125}} 

\put(20,86){$q'_1$} \put(50,86){$q'_2$} \put(80,86){$p'_1$}
\put(104,86){$p'_2$}

\put(135,50){$\pi$}

\put(180,80){\line(1,0){125}} \put(180,20){\line(1,0){125}} 

\put(198,86){$1$} \put(225,86){$-1$} \put(260,86){$0$}
\put(284,86){$\infty$}
\put(198,10){$1$} \put(225,10){$-1$} \put(272,10){$0$}

\put(323,80){$z$} \put(310,15){$\dfrac{2 z}{z^2+1}$}

\put(150,78){$\cong$} \put(150,18){$\cong$}

\thinlines

\put(20,80){\line(0,-1){60}} \put(50,80){\line(0,-1){60}} 
\put(80,80){\line(1,-5){12}} \put(104,80){\line(-1,-5){12}}

\put(200,80){\line(0,-1){60}} \put(230,80){\line(0,-1){60}} 
\put(260,80){\line(1,-5){12}} \put(284,80){\line(-1,-5){12}}

\put(130,70){\vector(0,-1){40}} \put(325,73){\vector(0,-1){40}}
\end{picture}
\caption{The Riemann surface $\overline{M}_{1}$ (for the case $s=2$).}
\label{fg:M-3}
\end{center}
\end{figure} 

By suitable variables $(z,\,w)$, we have $\sigma (z,\,w)=\left(\dfrac{1}{z},\,*\right)$, 
$\sigma'(z)=\dfrac{2 z}{z^2+1}$, $p'_1=0$，$p'_2=\infty$，$q'_1=1$，$q'_2=-1$. 
Also, $\overline{M}_{\gamma}$ is given by 
$w^2=z^{m_1 h_1}(z-1)^{m_2 h_2}(z+1)^{m_3 h_3}$. 

We consider the Gauss map. 
Essentially, we treat the two cases 
as in Figure~\ref{fg:g-3}: 

\begin{figure}[htbp] 
\begin{center}
\begin{picture}(340,90)

\thicklines

\put(20,3){\line(1,0){120}} \put(180,3){\line(1,0){120}}

\put(55,65){\circle*{2}}\put(55,45){\circle*{2}}\put(55,25){\circle*{2}}
\put(105,45){\circle*{2}}
\put(215,60){\circle*{2}} \put(215,30){\circle*{2}}
\put(265,60){\circle*{2}} \put(265,30){\circle*{2}}

\put(42,65){$q_1$}\put(42,45){$p_1$}\put(42,25){$p_2$}
\put(92,45){$q_2$}
\put(202,60){$p_1$}\put(202,30){$p_2$}
\put(252,60){$q_1$}\put(252,30){$q_2$}

\put(52,-7){$0$}\put(102,-7){$\infty$}
\put(212,-7){$0$}\put(262,-7){$\infty$}

\thinlines

\put(155,0){$S^2$} \put(154,55){$\overline{M}$} \put(164,35){$g$}

\put(55,3){\line(0,1){80}} \put(105,3){\line(0,1){80}}
\put(215,3){\line(0,1){80}} \put(265,3){\line(0,1){80}}

\put(159,50){\vector(0,-1){37}}

\end{picture}
\caption{The possibilities of the Gauss map.}
\label{fg:g-3}
\end{center}
\end{figure}
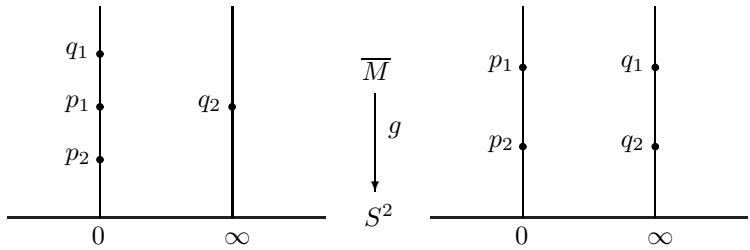 

For the case in the left hand side of Figure~\ref{fg:g-3}, 
the divisor of $g$ is given by 
\[
(g)=
\begin{cases}
(\gamma+2-2N)q_1+N p_1+N p_2-(\gamma+2)q_2 \\
(\gamma+2-2N)q_1+N p_1+N p_2-q_2-Q-R(Q)-\cdots-R^{\gamma}(Q),
\end{cases}
\]
where $N>0$ and $\gamma+2-2N>0$. Since $\sigma (q_2)=q_2$, $\sigma$ leaves $\{$the poles of $g\}$ invariant. 
For the latter case, if we take $Q^*:=\sigma (Q)$, then $Q^*$ must be a pole of $g$ which is distinct from the $Q_i$'s. 
This leads to a contradiction, and so we only consider the former case. 
In this case, we see 
\[(\eta)=-3 p_1-3 p_2+(2\gamma+4)q_2.\]
Then 
\[(g\eta)=(\gamma+2-2N)q_1+(N-3) p_1+(N-3) p_2+(\gamma+2)q_2.\]
It follows that $N-3<0$. Also $N>0$ yields $N=1,\,2$. 

Thus the divisor of $g^{\gamma+1}$ coincides with that of $\dfrac{z^{N}(z-1)^{\gamma+2-2 N}}{(z+1)^{\gamma+2}}$. 
Therefore, $\overline{M}_{\gamma}$ and $g$ can be rewritten as 
\[
w^{\gamma+1}=\dfrac{z^{N}(z-1)^{\gamma+2-2 N}}{(z+1)^{\gamma+2}} \>\>(N=1,\,2), 
\quad g=c\,w
\]
for some constant $c$, 
and $R(z,\,w)=(z,\,e^{\frac{2\pi}{\gamma+1} i}w)$, $\sigma(z,\,w)=(\frac{1}{z},\,w)$. Furthermore, by similar arguments, $\eta$ can be 
obtained by 
\[\eta^{\gamma+1}=c'\, \dfrac{(z+1)^{\gamma+4}}{z^{\gamma+3}(z-1)^{\gamma}}
(dz)^{\gamma+1}=
\begin{cases}
c'\,\dfrac{z+1}{z-1}\,\left(\dfrac{(z-1)dz}{z(z+1)w^2}\right)^{\gamma+1} \>\>(N=1) \\
c'\,\left(\dfrac{z+1}{z-1}\right)^2\,\left(\dfrac{dz}{z w}\right)^{\gamma+1} 
\>\>(N=2)
\end{cases}
\]
for some constant $c'$. So we may exclude this case, like the previous case. 

Next we consider the case in the right hand side of Figure~\ref{fg:g-3}. 
Then the divisor of $g$ is obtained by 
\[
(g)=\dfrac{\gamma+2}{2} p_1+\dfrac{\gamma+2}{2} p_2-N q_1-(\gamma+2-N)q_2, 
\]
where $N>0$ and $\gamma+2-N>0$. In this case, the divisor of $\eta$ is given by 
\[(\eta)=-3 p_1-3 p_2+ 2 N q_1+(2\gamma+4-2N)q_2. \]
Thus 
\[(g\eta)=\dfrac{\gamma-4}{2} p_1+\dfrac{\gamma-4}{2} p_2+  N q_1+(\gamma+2-N)q_2. \]
Hence $\gamma-4<0$ and so $\gamma=2$. Moreover, 
from the inequality $\gamma+2-N>0$, we 
obtain $N=1,\,2,\,3$. 
If $N=1$, then $\gamma+2-N$ and $\gamma+1$ are not coprime. 
Also, if $N=3$, then $N$ and $\gamma+1$ are not coprime.  
So we have $N=2$. 
As a result, the divisor of $g^{\gamma+1}(=g^3)$ coincides with that of 
$\dfrac{z^{\frac{\gamma+2}{2}}}{(z-1)^N (z+1)^{\gamma+2-N}}\left(=\dfrac{z^2}{(z-1)^2 (z+1)^2}\right)$. 
Therefore, $\overline{M}_2$ and $g$ can be rewritten as 
\[
w^{3}=\dfrac{(z-1)^2 (z+1)^2}{z^2},\quad g=\dfrac{c}{w}
\]
for some constant $c$, and $R(z,\,w)=(z,\,e^{\frac{2\pi}{3} i}w)$, $\sigma(z,\,w)=(\frac{1}{z},\,w)$. 
Also, we have $\eta=c'\frac{w}{z} dz$ for some constant $c'$. 
We shall prove that this case does not occur in \S\ref{uniqueness-thm}. 

\medskip 

\noindent
\underline{The case $s=3$}

By Table~\ref{tb:p1-to-p2}, the $p_i$'s and $q_i$'s must be branch 
points of $\pi_{\Delta_0}$ with the ramified index $(\gamma+1)-1$. 
As a result, there exist two 
sets $\{r_1^{(1)},\,\dots,\,r_{\gamma+1}^{(1)}\}$, 
$\{r_1^{(2)},\,\dots,\, r_{\gamma+1}^{(2)}\}$ of branch points with the ramified 
index $2-1$ of $\pi_{\Delta_0}$ 
satisfying $\pi_{\Delta_0}(r_i^{(1)})=\pi_{\Delta_0}(r_j^{(1)})$ and 
$\pi_{\Delta_0}(r_i^{(2)})=\pi_{\Delta_0}(r_j^{(2)})$ for $1\leq i,\,j\leq \gamma+1$. Note that 
$r_i^{(1)}$ and $r_i^{(2)}$ are distinct from the $p_i$'s and $q_i$'s. 
Hence, $\sigma (q_1)=q_2$ and $\sigma(q_2)=q_1$, 
and moreover, $\sigma$ induces a degree $2$ transformation 
$\sigma':\overline{M}_{\gamma}/\mathcal{R}\to \overline{M}_{\gamma}/\Delta_0$, that is, a transformation on $S^2$ 
and $r'_i:=\pi_{\mathcal{R}}(r_j^{(i)})$'s are two branch points of $\sigma'$ 
(see Figure~\ref{fg:M-4}). 

\begin{figure}[htbp] 
\begin{center}
\begin{picture}(340,100)

\thicklines

\put(0,80){\line(1,0){120}} \put(0,20){\line(1,0){120}} 

\put(10,86){$p'_1$} \put(34,86){$p'_2$} \put(50,86){$q'_1$}
\put(74,86){$q'_2$} \put(90,86){$r'_1$} \put(110,86){$r'_2$}

\put(130,50){$\sigma'$}

\put(170,80){\line(1,0){120}} \put(170,20){\line(1,0){120}} 

\put(180,86){$0$} \put(200,86){$\infty$} \put(220,86){$1$}
\put(240,86){$a$}
\put(190,8){$0$} \put(225,8){$\frac{2 a}{a+1}$} 
\put(254,86){$\sqrt{a}$} \put(274,86){$-\sqrt{a}$}

\put(310,80){$z$} \put(297,15){$\dfrac{2 a z}{z^2+a}$}

\put(145,78){$\cong$} \put(145,18){$\cong$}

\thinlines

\put(10,80){\line(1,-5){12}} \put(34,80){\line(-1,-5){12}}
\put(90,80){\line(0,-1){60}} \put(110,80){\line(0,-1){60}} 
\put(50,80){\line(1,-5){12}} \put(74,80){\line(-1,-5){12}}

\put(180,80){\line(1,-5){12}} \put(204,80){\line(-1,-5){12}}
\put(260,80){\line(0,-1){60}} \put(280,80){\line(0,-1){60}} 
\put(220,80){\line(1,-5){12}} \put(244,80){\line(-1,-5){12}}

\put(125,70){\vector(0,-1){40}} \put(313,75){\vector(0,-1){45}}
\end{picture}
\caption{The Riemann surface $\overline{M}_{1}$ (the case $s=3$).}
\label{fg:M-4}
\end{center}
\end{figure}
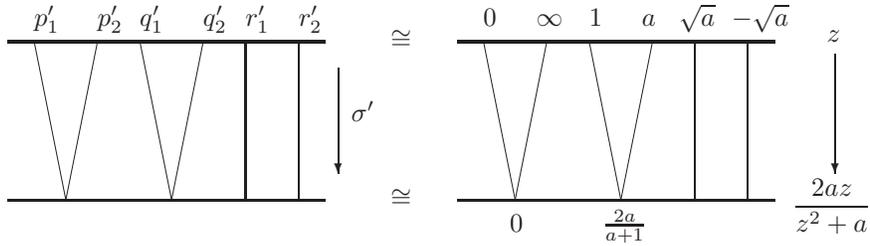 

Choosing suitable variables $(z,\,w)$, for $a\in\mathbb{C} \setminus \{0\}$, we have 
$\sigma (z,\,w)=\left(\dfrac{a}{z},\,*\right)$，$\sigma'(z)=\dfrac{2 a z}{z^2+a}$, 
$p'_1=0$, $p'_2=\infty$, $q'_1=1$, $q'_2=a$, 
$r'_1=\sqrt{a}$, $r'_2=-\sqrt{a}$. Also, $\overline{M}_{\gamma}$ is given by 
$w^{\gamma+1}=z^{m_1 h_1}(z-1)^{m_2 h_2}(z-a)^{m_2 h_3}$ 
(see Figure~\ref{fg:M-4}). 

We now consider the Gauss map. 
Essentially, we consider the two cases 
as in Figure~\ref{fg:g-4}: 

\begin{figure}[htbp] 
\begin{center}
\begin{picture}(340,90)

\thicklines

\put(20,3){\line(1,0){120}} \put(180,3){\line(1,0){120}}

\put(55,60){\circle*{2}} \put(55,30){\circle*{2}}
\put(105,60){\circle*{2}} \put(105,30){\circle*{2}}
\put(215,60){\circle*{2}} \put(215,30){\circle*{2}}
\put(265,60){\circle*{2}} \put(265,30){\circle*{2}}

\put(42,60){$p_1$}\put(42,30){$q_1$}
\put(92,60){$p_2$}\put(92,30){$q_2$}
\put(202,60){$p_1$}\put(202,30){$p_2$}
\put(252,60){$q_1$}\put(252,30){$q_2$}

\put(52,-7){$0$}\put(102,-7){$\infty$}
\put(212,-7){$0$}\put(262,-7){$\infty$}

\thinlines

\put(155,0){$S^2$} \put(154,55){$\overline{M}$} \put(164,35){$g$}

\put(55,3){\line(0,1){80}} \put(105,3){\line(0,1){80}}
\put(215,3){\line(0,1){80}} \put(265,3){\line(0,1){80}}

\put(159,50){\vector(0,-1){37}}

\end{picture}
\caption{The possibilities of the Gauss map.}
\label{fg:g-4}
\end{center}
\end{figure}
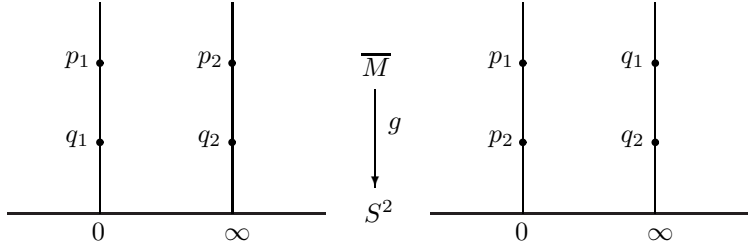 

Then the divisor of $g$ is obtained by 
\[(g)=
\begin{cases}
(\gamma+2-N) p_1 +N q_1- (\gamma+2-N) p_2 -N q_2 
&({\rm the }\>\>{\rm LHS }\>\>{\rm case})\\
\dfrac{\gamma+2}{2}p_1+\dfrac{\gamma+2}{2}p_2
-\dfrac{\gamma+2}{2}q_1-\dfrac{\gamma+2}{2}q_2 
&({\rm the }\>\>{\rm RHS }\>\>{\rm case}),
\end{cases}
\]
where $N>0$ and $\gamma+2-N>0$. The divisor of $\eta$ is given by 
\[
(\eta)=
\begin{cases}
-3 p_1 +(2\gamma-2 N+1)p_2+2 N q_2\\
-3 p_1-3 p_2 +(\gamma+2) q_1+(\gamma+2) q_2.
\end{cases}
\]
So the divisor of $g\eta$ is obtained by 
\[
(g\eta)=
\begin{cases}
(\gamma-N-1) p_1 +(\gamma- N-1)p_2+N q_1+ N q_2 \\
\dfrac{\gamma-4}{2} p_1+\dfrac{\gamma-4}{2} p_2+ \dfrac{\gamma+2}{2} q_1
+\dfrac{\gamma+2}{2} q_2.
\end{cases}
\]
For the former case, if $\gamma-N-1\geq 0$, then $g\eta$ is holomorphic. Thus 
$\gamma-N-1<0$ holds. Also, the inequality $\gamma+2-N>0$ yields 
$N=\gamma,\,\gamma+1$. For the latter case, $\gamma-4\geq 0$ cannot hold, 
and hence $\gamma=2$. As a consequence, 
\begin{align*}
g^{\gamma+1}=
\begin{cases}
c'\,z^{\gamma+2-N}\left(\dfrac{z-1}{z-a}\right)^{N} 
& ({\rm the}\>\>{\rm LHS}\>\>{\rm case})\\
c'\,\left(\dfrac{z}{(z-1)(z-a)}\right)^{\frac{\gamma+2}{2}}
& ({\rm the}\>\>{\rm RHS}\>\>{\rm case})
\end{cases}
\end{align*}
for some constant $c'$. 
If $N=\gamma+1$, then $\gamma+1$ and $N$ are not coprime. 
Thus $N=\gamma$ follows. 
Therefore, $\overline{M}_{\gamma}$ and $g$ can be rewritten as 
\[
\begin{cases}
w^{\gamma+1}=z^{2}\left(\dfrac{z-1}{z-a}\right)^{\gamma},
\>\> g=c\,w & ({\rm the}\>\>{\rm LHS}\>\>{\rm case})\\
w^3=\left(\dfrac{(z-1)(z-a)}{z}\right)^2, \>\> 
g=\dfrac{c}{w} & ({\rm the}\>\>{\rm RHS}\>\>{\rm case})
\end{cases}
\]
and 
\[
R(z,\,w)=(z,\,e^{\frac{2\pi}{\gamma+1} i}w), \quad 
\sigma (z,\,w)=
\begin{cases}
\left(\dfrac{a}{z},\,\dfrac{a^{\frac{\gamma+2}{\gamma+1}}}{w}\right) & ({\rm the}\>\>{\rm LHS}\>\>{\rm case}) \\
\left(\dfrac{a}{z},\,w\right) & ({\rm the}\>\>{\rm RHS}\>\>{\rm case}).
\end{cases}
\] 
Also, for some constant $c'$, we have 
\[
\eta=
\begin{cases}
c' \dfrac{dz}{zw} & ({\rm the}\>\>{\rm LHS}\>\>{\rm case})\\[6pt]
c' \dfrac{w}{z}dz & ({\rm the}\>\>{\rm RHS}\>\>{\rm case}).
\end{cases}
\]
$\Delta\setminus\Delta_0\neq \varnothing$ implies that 
there exists a degree $2$ antiholomorphic transformation. 
Hence $a\in\mathbb{R}$ and the antiholomorphic transformation 
can be represented by $(z,\,w)\mapsto (\overline{z},\,\overline{w})$. 
The former case corresponds to our result in \S\ref{sec:evengenus}, and 
the latter case is considered in \S\ref{uniqueness-thm}. 
Note that the case $t=1$ does not occur. 

\medskip

\subsection{Well-definedness}\label{uniqueness-thm}

In this subsection, we consider the well-definedness for the following two cases: 
\begin{center}
 \begin{tabular}{|c|c|c|c|} \hline
$\overline{M}_{\gamma} $   & $g$ & $\eta$ & symmetries  \\ \hline\hline
$w^2=z(z^2-1)$  &  $c w$  & $c'\dfrac{dz}{zw}$ &  $R (z,\,w)=(-z,\,i w)$ \\ \hline
$w^3=\dfrac{(z-1)^2 (z-a)^2}{z^2}$  &  $\dfrac{c}{w}$  & $c' \dfrac{w}{z}dz$  & $R (z,\,w)=(z,\,e^{\frac{2\pi}{3}i}w)$,    
$\sigma (z,\,w)=(\frac{a}{z},\,w)$ \\[5pt] \hline
 \end{tabular} 
\end{center}
Note that $c,\,c'\in \mathbb{C}\setminus\{0\}$, $a\in \mathbb{R}\setminus \{0,\,1\}$.  
$M$ is given by
\[
M=\begin{cases}
\overline{M}_{1} \setminus \{(0,\,0),\,(\infty,\,\infty)\} & ({\rm the}\>\>{\rm former}\>\>{\rm case}) \\ 
\overline{M}_{2} \setminus \{(0,\,\infty),\,(\infty,\,\infty)\} & ({\rm the}\>\>{\rm latter}\>\>{\rm case}).
\end{cases}
\]
The case $a=-1$ corresponds to the surface which we treat for the case $s=2$. 
Note that the Weierstrass data $(e^{i\theta}g,e^{-i\theta}\eta)$ produces 
the same minimal surface as $(g,\eta)$ rotated by an angle $\theta$ 
around the $x_3$-axis. 
So after a suitable rotation of the surface, we may assume $c\in\mathbb{R}_+$.
Also, multiplying a positive real number into $\eta$ is just a homothety, so 
we may assume that $|c'|=1$. 

Our claim is that all cases do not occur. 

\medskip

\noindent
\underline{The former case}

First we consider $\Phi=\,^t (\Phi_1,\,\Phi_2,\,\Phi_3)$ in Theorem~\ref{th:w-rep}: 
\[
\Phi_1=\left(\dfrac{1}{w}-c^2w\right)c'\dfrac{dz}{z},\> \Phi_2=i \left(\dfrac{1}{w}+c^2w\right)c'\dfrac{dz}{z},\> \Phi_3=2c c' \dfrac{dz}{z}.
\]
For the residue of $\Phi_3$ at $z=0$ to be real, we see that $c'=\pm 1$. 
We may choose $c'=1$. 
We shall use the notation in the proof of Theorem~\ref{existence1-thm} for $\gamma=1$. 

Straightforward calculation yields 
\[
\dfrac{dz}{zw} -d\left( \dfrac{2w}{z} \right) = -\dfrac{z}{w} dz.
\]
Thus we have 
\begin{align*}
\oint_{\ell'} \eta &= - \oint_{\ell'} \dfrac{z}{w} dz = - \oint_{\ell} \dfrac{z}{w} dz = -2i  \int_0^1 \sqrt{\dfrac{t}{1-t^2}} dt, \\
\oint_{\ell'} g^2 \eta &= c^2  \oint_{\ell} \dfrac{w}{z} dz =  -2i c^2 \int_0^1 \sqrt{\dfrac{1-t^2}{t}} dt. 
\end{align*}
\eqref{eq:period1} implies 
\[
- \int_0^1 \sqrt{\dfrac{t}{1-t^2}} dt=c^2 \int_0^1 \sqrt{\dfrac{1-t^2}{t}} dt. 
\]
So we have $c^2<0$ and this contradicts $c>0$.
\medskip

\noindent
\underline{The latter case}

First we consider $\Phi=\,^t (\Phi_1,\,\Phi_2,\,\Phi_3)$ in Theorem~\ref{th:w-rep}: 
\[
\Phi_1=\left(w-\dfrac{c^2}{w}\right)c'\dfrac{dz}{z},\> \Phi_2=i \left(w+\dfrac{c^2}{w}\right)c'\dfrac{dz}{z},\> \Phi_3=2c c' \dfrac{dz}{z}.
\]
For the residue of $\Phi_3$ at $z=0$ to be real, we see that $c'=\pm 1$. 
We may choose $c'=1$. 

We shall show that for any $c>0$ and $a\in \mathbb{R}\setminus \{0,\,1\}$, 
the period condition \eqref{eq:period} cannot be satisfied. 

A straightforward calculation yields  
\begin{align}\label{exact-form2-1}
\eta +\frac{3}{2}dw=\left(\frac{w}{z-1}+\frac{w}{z-a}\right)dz.
\end{align}
Note that the right hand side of this equation is a holomorphic differential on 
$\overline{M}_2\setminus\{(\infty,\infty)\}$. 

We now consider the following three cases: 
$a>1$, $0<a<1$, $a<0$. \\
$\,$\\
(i) The case $a>1$: 

We set 
\begin{align*}
\ell=&\left\{ (z,\,w)=\left(t,\,\sqrt[3]{\frac{(1-t)^2(a-t)^2}{t^2}}\right) \>\Bigg|\>0\leq t\leq 1 \right\} \\
&\quad \cup \left\{ (z,\,w)=\left(-t,\,e^{\frac{2}{3}\pi i}\sqrt[3]{\frac{(1+t)^2(a+t)^2}{t^2}} \right)\>\Bigg|\> -1\leq t\leq 0 \right\}. 
\end{align*}
From \eqref{exact-form2-1}, we have 
\begin{align}
\label{path-int1} 
\oint_\ell\eta
&=-(1-e^{2\pi i/3})\int_0^1 
  \left(\sqrt[3]{\frac{(a-t)^2}{t^2(1-t)}}
       +\sqrt[3]{\frac{(1-t)^2}{t^2(a-t)}}\right)dt, \\
\label{path-int2} 
\int_{\ell} g^2\eta 
&= c^2(1-e^{-2\pi i/3}) 
  \int_0^1 \dfrac{dt}{\sqrt[3]{t(1-t)^2(a-t)^2}}.  
\end{align}
Thus \eqref{eq:period1} is equivalent to 
\[
-\int_0^1 \left(\sqrt[3]{\frac{(a-t)^2}{t^2(1-t)}}
        +\sqrt[3]{\frac{(1-t)^2}{t^2(a-t)}}\right)dt
=c^2\int_0^1 \dfrac{dt}{\sqrt[3]{t(1-t)^2(a-t)^2}}.
\]
But this is impossible because the left hand side is a negative real number 
and the right hand side is a positive real number.
\\
$\,$\\
(ii) The case $0<a<1$: 

We set 
\begin{align*}
\ell=&\left\{ (z,\,w)=\left(at,\,\sqrt[3]{\frac{(at-1)^2(1-t)^2}{t^2}}\right) \>\Bigg|\>0\leq t\leq 1 \right\} \\
&\quad \cup \left\{ (z,\,w)=\left(-at,\,e^{\frac{2}{3}\pi i}\sqrt[3]{\frac{(at+1)^2(1+t)^2}{t^2}} \right)\>\Bigg|\> -1\leq t\leq 0 \right\}. 
\end{align*}
From \eqref{exact-form2-1}, we have 
\begin{align}
\label{path-int2-1} 
\oint_\ell\eta
&=-(1-e^{2\pi i/3})\int_0^1 
  \left(a\sqrt[3]{\frac{(1-t)^2}{t^2(1-at)}}
       +\sqrt[3]{\frac{(1-at)^2}{t^2(1-t)}}\right)dt, \\
\label{path-int2-2} 
\int_{\ell} g^2\eta 
&= c^2(1-e^{-2\pi i/3}) 
  \int_0^1 \dfrac{dt}{\sqrt[3]{t(1-t)^2(1-at)^2}}.  
\end{align}
Thus \eqref{eq:period1} is equivalent to 
\[
-\int_0^1\left(a\sqrt[3]{\frac{(1-t)^2}{t^2(1-at)}}
         +\sqrt[3]{\frac{(1-at)^2}{t^2(1-t)}}\right)dt
=c^2\int_0^1 \dfrac{dt}{\sqrt[3]{t(1-t)^2(1-at)^2}},
\]
but again this is impossible by the same reason as in the case (i). 
\\
$\,$\\
(iii) The case $a<0$: 

We set 
\begin{align*}
\ell=&\left\{ (z,\,w)=\left(at,\,\sqrt[3]{\frac{(1-at)^2(1-t)^2}{t^2}}\right) \>\Bigg|\>0\leq t\leq 1 \right\} \\
&\quad \cup \left\{ (z,\,w)=\left(-at,\,e^{\frac{2}{3}\pi i}\sqrt[3]{\frac{(1+at)^2(1+t)^2}{t^2}} \right)\>\Bigg|\> -1\leq t\leq 0 \right\}, \\
\ell'=&\left\{ (z,\,w)=\left(t,\,\sqrt[3]{\frac{(1-t)^2(t-a)^2}{t^2}}\right) \>\Bigg|\>0\leq t\leq 1 \right\} \\
&\quad \cup \left\{ (z,\,w)=\left(-t,\,e^{\frac{2}{3}\pi i}\sqrt[3]{\frac{(1+t)^2(t+a)^2}{t^2}} \right)\>\Bigg|\> -1\leq t\leq 0 \right\}. 
\end{align*}
From \eqref{exact-form2-1}, we have 
\begin{align}
\label{path-int3-1} 
\oint_\ell\eta
&=-(1-e^{2\pi i/3})\int_0^1 
  \left(a\sqrt[3]{\frac{(1-t)^2}{t^2(1-at)}}
       +\sqrt[3]{\frac{(1-at)^2}{t^2(1-t)}}\right)dt, \\
\label{path-int3-2} 
\int_{\ell} g^2\eta 
&= c^2(1-e^{-2\pi i/3}) 
  \int_0^1 \dfrac{dt}{\sqrt[3]{t(1-t)^2(1-at)^2}}.  
\end{align}
Thus \eqref{eq:period1} is equivalent to 
\begin{equation}\label{eq:per-a-neg1}
-\int_0^1\left(a\sqrt[3]{\frac{(1-t)^2}{t^2(1-at)}}
         +\sqrt[3]{\frac{(1-at)^2}{t^2(1-t)}}\right)dt
=c^2\int_0^1 \dfrac{dt}{\sqrt[3]{t(1-t)^2(1-at)^2}}. 
\end{equation}
The right hand side is clearly positive. 
So now we estimate the left hand side. 
\begin{align*}
(\mathrm{LHS})
&=-a\int_0^1\sqrt[3]{\frac{(1-t)^2}{t^2(1-at)}}dt
  -\int_0^1\sqrt[3]{\frac{(1-at)^2}{t^2(1-t)}}dt \\
&\le -a\int_0^1\sqrt[3]{\frac{(1-t)^2}{t^2}}dt
  -\int_0^1\sqrt[3]{\frac{1}{t^2(1-t)}}dt \\
&= -aB\left(\frac{1}{3},\frac{5}{3}\right)-B\left(\frac{1}{3},\frac{2}{3}\right) \\
&= -\left(\frac{2}{3}a+1\right)B\left(\frac{1}{3},\frac{2}{3}\right), 
\end{align*}
where $B(x,y)$ is the classical beta function mentioned in \S\ref{sec:evengenus}.
Hence, if $-\left(\frac{2}{3}a+1\right)\le 0$, 
\eqref{eq:per-a-neg1} never holds.  
That is, 
\begin{equation}\label{eq:claim1}
\mbox{if $-3/2\le a < 0$, \eqref{eq:per-a-neg1} never holds.} 
\end{equation}

On the ather hand, 
we have 
\begin{align}
\label{path-int4-1} 
\oint_{\ell'}\eta
&=-(1-e^{2\pi i/3})\int_0^1 
  \left(\sqrt[3]{\frac{(t-a)^2}{t^2(1-t)}}
       -\sqrt[3]{\frac{(1-t)^2}{t^2(t-a)}}\right)dt, \\
\label{path-int4-2} 
\int_{\ell'} g^2\eta 
&= c^2(1-e^{-2\pi i/3}) 
  \int_0^1 \dfrac{dt}{\sqrt[3]{t(1-t)^2(t-a)^2}},  
\end{align}
from \eqref{exact-form2-1}.
Thus \eqref{eq:period1} is equivalent to 
\begin{equation}\label{eq:per-a-neg2}
-\int_0^1\left(\sqrt[3]{\frac{(t-a)^2}{t^2(1-t)}}
        -\sqrt[3]{\frac{(1-t)^2}{t^2(t-a)}}\right)dt
=c^2\int_0^1 \dfrac{dt}{\sqrt[3]{t(1-t)^2(t-a)^2}}. 
\end{equation}
The right hand side is again positive. 
So now we estimate the left hand side again. 
\begin{align*}
(\mathrm{LHS})
&=-\int_0^1\sqrt[3]{\frac{(t-a)^2}{t^2(1-t)}}dt
  +\int_0^1\sqrt[3]{\frac{(1-t)^2}{t^2(t-a)}}dt \\
&\le -\int_0^1\sqrt[3]{\frac{(-a)^2}{t^2(1-t)}}dt
  +\int_0^1\sqrt[3]{\frac{(1-t)^2}{t^2(-a)}}dt \\
&= -(-a)^{2/3}B\left(\frac{1}{3},\frac{2}{3}\right)
   +(-a)^{-1/3}B\left(\frac{1}{3},\frac{5}{3}\right) \\
&= (-a)^{-1/3}\left(a+\frac{2}{3}\right)B\left(\frac{1}{3},\frac{2}{3}\right). 
\end{align*}
Hence, if $a+\frac{2}{3}\le 0$, 
\eqref{eq:per-a-neg2} never holds.  
That is, 
\begin{equation}\label{eq:claim2}
\mbox{if $a\le -2/3$, \eqref{eq:per-a-neg2} never holds.} 
\end{equation}

Combining \eqref{eq:claim1} and \eqref{eq:claim2}, 
the period condition cannot be solved.  

Main Theorem~\ref{main2} is an immediate consequence by the 
above arguments.

\section{Remaining problems} 
\label{sec:remaining-problems} 

In this section we introduce remaining problems related to this work. 

\subsection{The case that $\gamma$ odd and greater than 1} 
\label{sec:oddgenus} 

For the case that the genus $\gamma$ is odd and greater than 1, 
a complete minimal surface of finite total curvature 
$f:M=\overline{M}_\gamma\setminus\{p_1,p_2\}\to\mathbb{R}^3$ 
which satisfies equality in \eqref{eq:ineq} is yet to be 
found.  
However, Matthias Weber \cite{We} has constructed 
the following examples numerically. 

\begin{example}[Weber]
Let $\gamma$ be a positive integer. Define 
\[
F_1(z;a_1,a_3,\ldots ,a_{2\gamma -1})=\prod_{i=1}^\gamma (z-a_{2i-1}), \;\;
F_2(z;a_2,a_4,\ldots ,a_{2\gamma})=\prod_{i=1}^\gamma (z-a_{2i}),
\]
where $1=a_1<a_2<\cdots <a_{2\gamma}$ are constants to be determined.  
Define a compact Riemann surface $\overline{M}_\gamma$ of genus $\gamma$ by 
\[
\overline{M}_\gamma
 =\left\{(z,w)\in (\mathbb{C}\cup\{\infty\})^2\,\left|\,
        w^2=z\dfrac{F_1(z;a_1,a_3,\ldots ,a_{2\gamma -1})}
                   {F_2(z;a_2,a_4,\ldots ,a_{2\gamma})}\right.\right\}. 
\]
We set 
\[
M=\overline{M}_\gamma\setminus\left\{(0,0), (\infty, \infty)\right\},\quad
g=c\frac{w}{z+1} \;\; (c>0), \quad \eta = \frac{(z+1)^2}{zw}dz.  
\]
Then there exist constants $c, a_2, a_3, \ldots , a_{2\gamma}$ 
such that \eqref{eq:period} holds.  (See Figure~\ref{fg:ce1-4}.) 
\end{example}

\begin{figure}[htbp] 
\begin{center}
\begin{tabular}{cccc}
 \includegraphics[width=.21\linewidth]{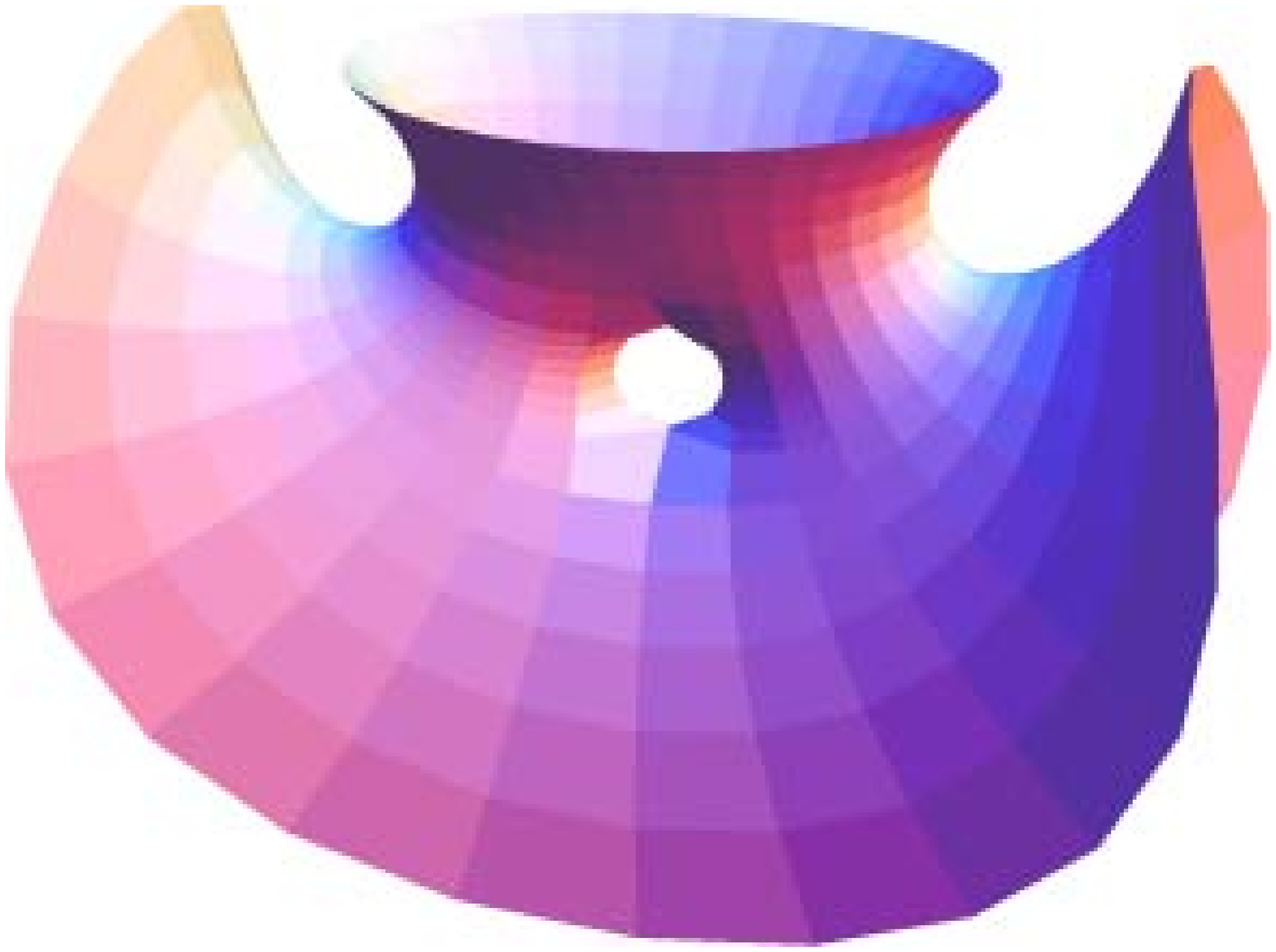} &
 \includegraphics[width=.21\linewidth]{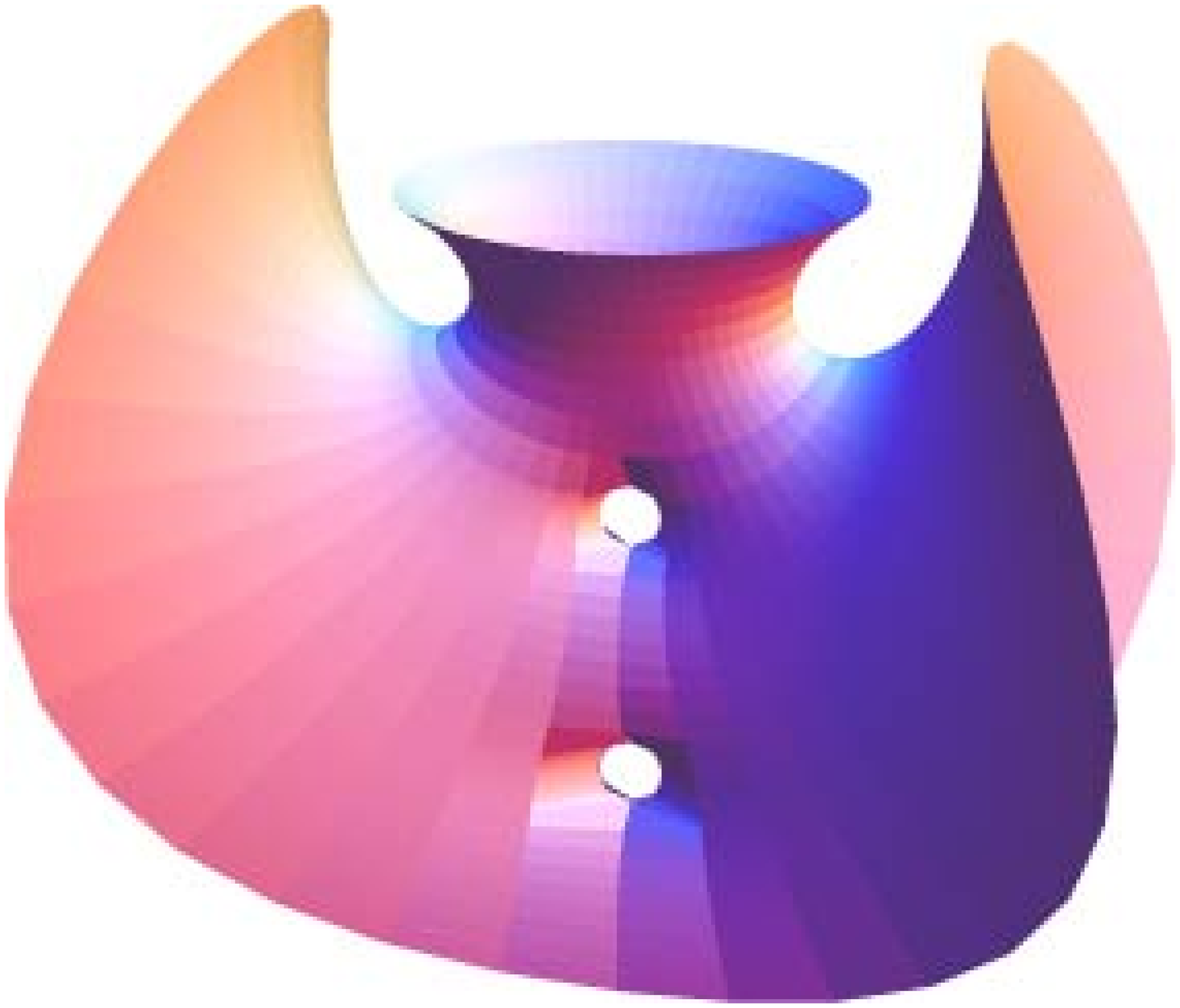} &
 \includegraphics[width=.21\linewidth]{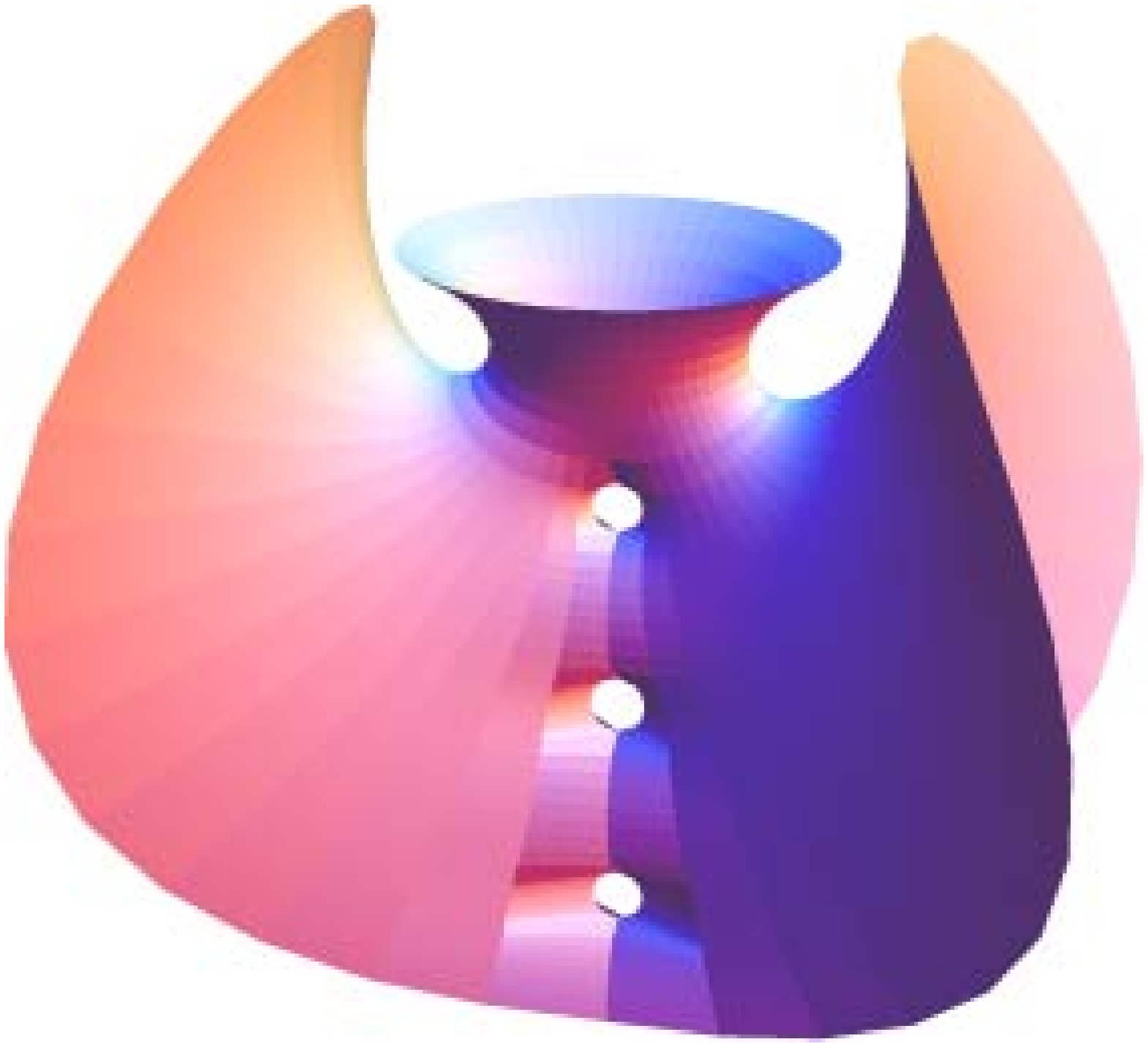} &
 \includegraphics[width=.21\linewidth]{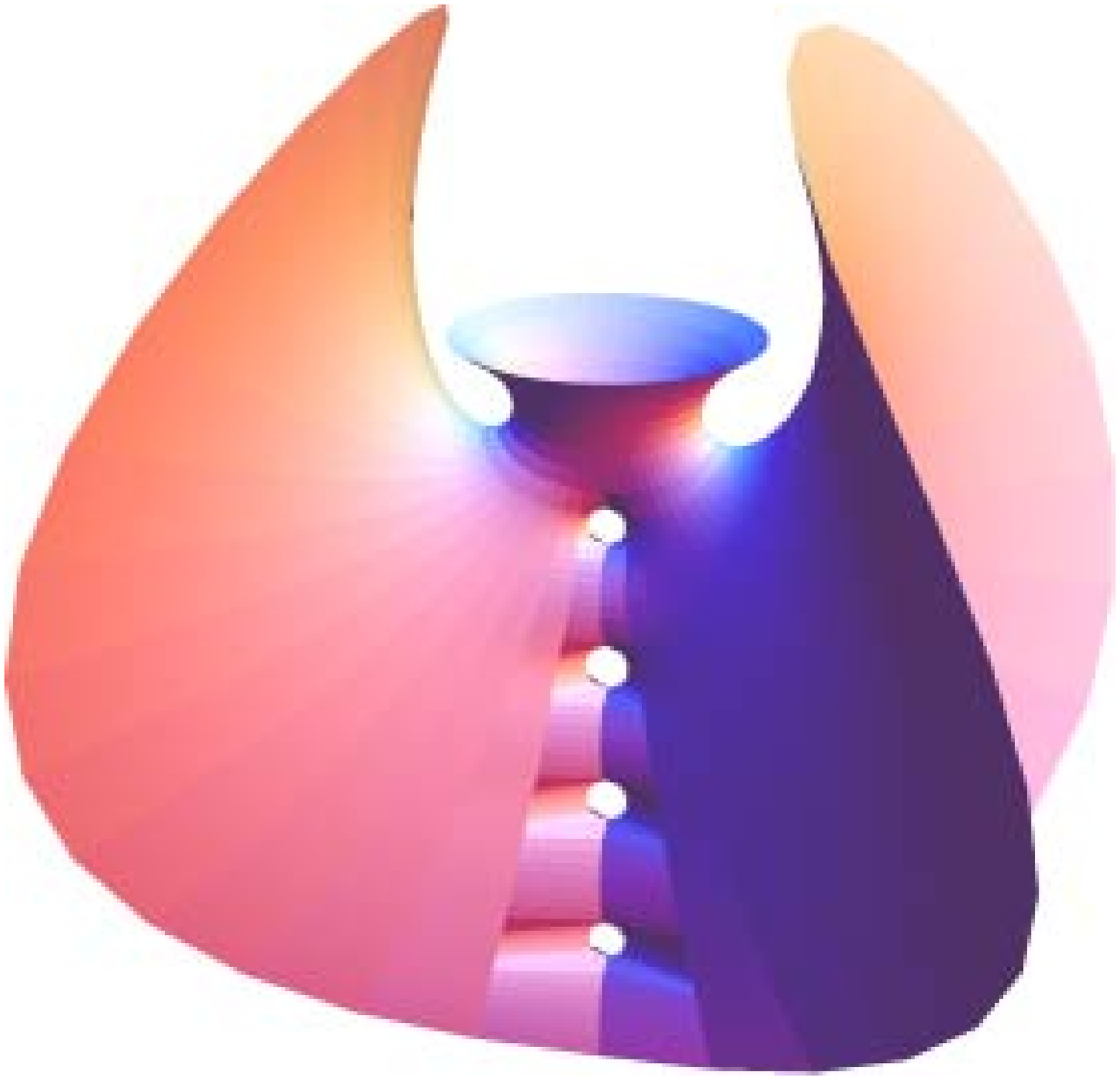} \\
 $\gamma =1$ & $\gamma =2$ & $\gamma =3$ & $\gamma =4$ 
\end{tabular}
\caption{Minimal surfaces of genus $\gamma$ with two ends 
         which satisfy $\deg (g)=\gamma +2$.}
\label{fg:ce1-4}
\end{center}
\end{figure} 

For $\gamma =1$, we can prove the existence of the surface rigorously. 
However, for other cases, since the surface does not have enough symmetry, 
the rigorous proof of the existence still remains an open problem.  

\subsection{Existence of non-orientable minimal surfaces} 

Our work is devoted to minimal surfaces satisfying $\deg(g)= \gamma +2$.  
On the other hand, it is important to consider 
the existence of non-orientable minimal surfaces with $\deg(g)= \gamma +3$. 
Now, we review non-orientable minimal surfaces in $\mathbb{R}^3$. 

Let $f':M'\to\mathbb{R}^3$ be a minimal immersion of a non-orientable 
surface into $\mathbb{R}^3$. 
Then the oriented two sheeted covering space $M$ of $M'$ 
naturally inherits a Riemann surface structure and we have a canonical projection 
$\pi:M\to M'$.  
We can also define a map $I:M\to M$ such that $\pi\circ I=\pi$, 
which is an antiholomorphic involution on $M$ without fixed points.  
Here $M'$ can be identified with $M/\langle I\rangle$. 
In this way, if $f:M\to\mathbb{R}^3$ is a conformal minimal surface and there 
is an antiholomorphic involution $I:M\to M$ without fixed points so that 
$f\circ I=f$, 
then we can define a non-orientable minimal surface 
$f':M'=M/\langle I\rangle\to\mathbb{R}^3$.  
Conversely, every non-orientable minimal surface 
is obtained in this procedure. 

Suppose that $f':M'=M/\langle I\rangle\to\mathbb{R}^3$ is complete 
and of finite total curvature.  
Then, 
we can apply Theorems~\ref{th:huber-osserman} and \ref{th:oss-ineq} 
to the conformal minimal immersion $f:M\to\mathbb{R}^3$. 
Furthermore, we have a stronger restriction on the topology of $M'$ or $M$. 
In fact, Meeks \cite{Me} showed that the Euler characteristic 
$\chi (\overline{M}_\gamma)$ and $2\deg(g)$ are congruent modulo $4$, 
where $g$ is the Gauss map of $f$. 
By these facts, we can observe that for every complete non-orientable 
minimal surface of finite total curvature, 
$\deg(g)\geq \gamma +3$ holds. 

For $\gamma =0$ and $\gamma =1$, Meeks' M\"obius strip \cite{Me} and 
L\'opez' Klein bottle \cite{Lo} satisfy $\deg(g)=\gamma +3$, respectively. 
But, for $\gamma \geq 2$, no examples with $\deg(g)=\gamma +3$ are known. 
So, it is interesting to give a minimal surface satisfying 
$\deg(g)= \gamma +3$ with an antiholomorphic involution without fixed points. 
This problem appeared in \cite{LM} and \cite{Ma}.

\begin{acknowledgements}
The authors thank Professor Reiko Miyaoka for a fruitful discussion 
when the authors attended her informal lectures on minimal surfaces 
at Kyushu University.  
The authors also thank Professors Shin Kato, 
Francisco J. L\'{o}pez, Wayne Rossman, Masaaki Umehara, Matthias Weber, 
and Kotaro Yamada for their valuable comments.  
They would further like to thank the referee for comments that 
significantly improved the results here. 
\end{acknowledgements}

\bigskip
{\small
\noindent
Shoichi Fujimori \\
Department of Mathematics \\ 
Okayama University \\ 
Okayama 700-8530, Japan.  \\
{\itshape E-mail address}\/: fujimori@math.okayama-u.ac.jp
\par\vskip4ex
\noindent
Toshihiro Shoda \\
Faculty of Culture and Education \\
Saga University \\
1 Honjo-machi, Saga-city, Saga 840-8502, Japan. \\
{\itshape E-mail address}\/: tshoda@cc.saga-u.ac.jp
}
%
\end{document}